\documentclass{amsart}
\usepackage{hyperref}

\usepackage{amsrefs}

\usepackage{amssymb}
\usepackage{verbatim}

\hypersetup{
colorlinks,
citecolor=black,
filecolor=black,
linkcolor=black,
urlcolor=black
}
\allowdisplaybreaks

\numberwithin{equation}{section}
\newtheorem{theorem}{Theorem}[section]
\newtheorem{corollary}[theorem]{Corollary}
\newtheorem{lemma}[theorem]{Lemma}
\newtheorem{proposition}[theorem]{Proposition}

\theoremstyle{definition}

\newtheorem{definition}[theorem]{Definition}

\newcommand{\dl}{\delta}

\newcommand{\om}{\omega}
\newcommand{\sg}{\sigma}
\newcommand{\Om}{\Omega}

\newcommand{\ka}{\kappa}
\newcommand{\vk}{\varkappa}

\newcommand{\eps}{\varepsilon}

\newcommand{\N}{\mathbb{N}}
\newcommand{\C}{\mathbb{C}}
\newcommand{\Z}{\mathbb{Z}}
\newcommand{\R}{\mathbb{R}}
\newcommand{\T}{\mathbb{T}}

\newcommand{\E}{\mathcal{E}}
\newcommand{\Lc}{\mathcal{L}}
\newcommand{\I}{\mathfrak{I}}

\newcommand{\f}{\frac}

\newcommand{\na}{\nabla}
\newcommand{\dd}{\partial}
\newcommand{\lan}{\langle}
\newcommand{\ran}{\rangle}
\DeclareMathOperator{\sgn}{sgn}
\DeclareMathOperator{\Id}{Id}

\let\Re=\undefined\DeclareMathOperator{\Re}{Re}
\let\Im=\undefined\DeclareMathOperator{\Im}{Im}
\let\dist=\undefined\DeclareMathOperator{\dist}{dist}
\DeclareMathOperator{\Tr}{Tr} 
\newcommand{\op}{\text{\upshape op}}

\newcommand{\qtq}[1]{\quad\text{#1}\quad}

\newcommand{\widebar}{\overline}

\title[Hamiltonian formulation of CCM models]{The Hamiltonian formulation of\\continuum Calogero--Moser models}

\author[R.~Killip]{Rowan Killip}
\address{
Department of Mathematics\\
University of California\\Los Angeles\\CA 90095\\USA\\\& CEREMADE, CNRS\\Universit\'e Paris Dauphine--PSL\\ Place du Mar\'echal de Lattre de Tassigny\\ 75016 Paris\\ France}
\email{killip@ceremade.dauphine.fr}

\author[K.~Marsden]{Katie Marsden}
\address{Department of Mathematics, University of California, Los Angeles, CA 90095, USA}
\email{kmarsden@math.ucla.edu}

\author[M.~Vi\c{s}an]{Monica Vi\c{s}an}
\address{Department of Mathematics, University of California, Los Angeles, CA 90095, USA}
\email{visan@math.ucla.edu}

\begin{document}
\raggedbottom

\begin{abstract}
Recent well-posedness results have identified the Hardy space $L^2_+$ as the natural phase space for continuum Calogero--Moser models, both focusing and defocusing, on the line and on the torus.  In this paper, we introduce a symplectic form on this phase space and so are able to realize these models as Hamiltonian systems. Moreover, we demonstrate that previously identified conserved quantities are mutually commuting, reinforcing the notion that these models are completely integrable.  We further illustrate the utility of these structures by using them to give a new proof of global well-posedness in the critical space $L^2_+$, under the necessary mass restriction in the focusing case.

Our work also brings to light several unforeseen connections: (i) the threshold for well-posedness coincides with that for the nondegeneracy of the symplectic form; (ii) this threshold is connected through Carleman's inequality to the isoperimetric problem in the plane; (iii) the transition from the line to the torus gives rise to a modified dynamical equation. 
\end{abstract}

\maketitle

\tableofcontents

\section{Introduction}  

The continuum Calogero--Moser models describe the evolution of a complex-valued function $q(t,x)$, where the spatial variable $x$ ranges over the line $\R$ or over the torus $\T=\R/2\pi\Z$.  These models were originally introduced in the line setting, where they take the following form:
\begin{equation}\label{CCMR}
iq_t=-q_{xx}\pm 2iq\dd_x C_+(|q|^2). \tag{CCM${}_\R$}
\end{equation}
Here, $C_+$ denotes the Cauchy--Szeg\H{o} projection onto the Hardy space $L^2_+(\R)$; see \eqref{Hardy} and \eqref{CSP}.  The upper sign in \eqref{CCMR} corresponds to the focusing model, while the lower sign represents the defocusing case.  We will maintain this convention throughout the paper.

The defocusing model was introduced by Pelinovsky in \cite{PELINOVSKY1995401}, where it was shown to described the self-modulation of wavepackets at a fluid interface in the infinite-depth limit.  The focusing model was introduced in \cite{ABANOV_BETTELHEIM_WIEGMANN}, where it was derived as a continuum limit of the classical Calogero--Moser particle system \cite{Calogero, MOSER1975197}.

One important feature of \eqref{CCMR} is the invariance of the Hardy space $L^2_+(\R)$ under these flows.  Of course, this relies on well-posedness theory, which we will discuss below. At this moment, we simply wish to emphasize that the property of extending holomorphically to the upper half-plane is formally preserved by the equation.  Such holomorphic solutions were termed chiral in \cite{ABANOV_BETTELHEIM_WIEGMANN} and were a key consideration in the formulation of this equation.  Concretely, a gauge transformation was introduced precisely so that chiral solutions correspond to wave-forms that admit a holomorphic extension; in the original variables, chiral solutions corresponded to a complicated manifold in $L^2(\R)$, not even a vector space!
As noted in \cite{ABANOV_BETTELHEIM_WIEGMANN}, the gauge transformation leads to a non-canonical Poisson structure on $L^2(\R)$, whose corresponding symplectic structure is given by
\begin{equation}\label{eqn:omega_hash}
\om_q^{\#}(f,g)=\Re\lan f, i g\ran_{L^2}\pm\iint_{\R\times\R}\Re(\bar{q}f)(x)\Re(\bar{q}g)(y)\sgn(x-y)\,dx\,dy;
\end{equation}
see also \cite{gerard2024calogero}.  Note that our inner product is $\C$-linear in the second argument; by comparison, symplectic forms are merely bilinear over $\R$.

The consideration of chiral solutions has dominated work on this problem.  This requires using $L^2_+(\R)$ as the phase space and so we must develop an intrinsic Hamiltonian formulation of \eqref{CCMR} on this space.  This is the first problem addressed in this paper; see Theorem~\ref{T:R}.  We then turn to the same question posed on the torus and find that matters are more complicated; indeed, we find that the natural Hamiltonian leads to a different dynamical equation; see Theorem~\ref{T:T}.

The well-posedness theory of the four continuum Calogero--Moser models (i.e., focusing/defocusing, posed on $\R$ or $\T$) is now essentially complete in Hardy--Sobolev spaces and this success has relied crucially on manifestations of complete integrability.  However, without a symplectic structure it has not been possible to verify the Poisson commutativity of the conserved quantities.  This is our next achievement; see Proposition~\ref{P:beta comm}.  

Our final reason for developing a symplectic/Hamiltonian formulation for these models is to understand the Gibbs state and prove its dynamical invariance.  While the details of this will be taken up in a subsequent paper, we use our elaboration of the underlying structures to reprove global scaling-critical well-posedness via the method of commuting flows.  Our motivation here is that this technology has proven to be very effective in constructing dynamics in the Gibbs state, both in finite and infinite volume; see \cite{KMV:white,FKV:mKdV}.

Let us turn to our first task: understanding the Hamiltonian structure of \eqref{CCMR} posed on the line.  It was previously suggested \cite[p. 4011]{gerard2024calogero} that \eqref{eqn:omega_hash} restricts to a symplectic structure for the focusing \eqref{CCMR} on $L^2_+(\R)$.  We have discovered that this is not true!  The $2$-form \eqref{eqn:omega_hash} becomes degenerate at large masses.  Strikingly this degeneration happens at the same threshold as appears in the well-posedness theory of that model!  On the other hand, we are able to show that \eqref{eqn:omega_hash} does restrict to a valid symplectic structure in the defocusing case.  To formulate this as a theorem, we first need some notations.

We will refer to the observable
\begin{equation}\label{M defn}
M(q):= \int |q(x)|^2\,dx 
\end{equation}
as the \emph{mass}.  Here integration extends over $\R$ for models on the line, or over $\T$ in the periodic case.  The mass is conserved under all the flows we discuss; indeed, we will see that it generates the phase rotation symmetry.

The significance of $M(q)$ is further reinforced by the fact that it is invariant under the scaling symmetry of \eqref{CCMR},
\begin{equation}\label{scaling}
q(t,x) \mapsto q_\lambda(t,x) : = \lambda^{\frac12} q(\lambda^2 t, \lambda x),
\end{equation}
and so provides an intrinsic notion of the size of a solution.  We also introduce
\begin{align}\label{eqn:M_ast}
B_M := \{ q \in L^2_+ : M(q) < M\} \qtq{and} 
M_\ast:=\begin{cases} 2\pi &\text{in the focusing case,}\\ \infty &\text{in the defocusing case.} \end{cases}
\end{align}

It has been shown that \eqref{CCMR} is globally well-posed in $B_{M_*}$ but that blowup can occur in the focusing case for solutions with $M(q)>M_*$; see \cite{gerard2024calogero,MR4844677,killip2023scaling,KimKwon}.

\begin{theorem}[Hamiltonian formulation on $\R$]\label{T:R}
In the line setting, the form $\om$ defined in \eqref{eqn:omega_hash} restricts to a strong symplectic form on $B_{M_*}$; however, in the focusing case the form is degenerate on any ball $B_M$ with $M>M_*$.  The \eqref{CCMR} flows are generated by the Hamiltonians
\begin{align}
H(q)=\tfrac12\|q'\mp i qC_+(|q|^2)\|_{L^2}^2 .\label{eqn:H(q) R}
\end{align}
\end{theorem}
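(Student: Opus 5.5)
The plan is to rewrite $\om$ as $\om_q(f,g)=\Re\lan f, J_q g\ran$ for an explicit $\R$-linear operator $J_q$ on $L^2_+(\R)$, and then treat closedness, nondegeneracy and the Hamiltonian identity separately.

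\emph{The operator $J_q$.} Convolution with $\sgn$ is the antiderivative operator $S$, with Fourier symbol $2/(i\xi)$ in the principal-value sense, and $S$ is skew-adjoint. Hence for $f,g\in L^2_+$
\begin{equation*}
\om_q(f,g)=\Re\lan f, ig\ran \pm \lan \Re(\bar q f), S\,\Re(\bar q g)\ran ,\qquad
J_q g = ig \pm C_+\bigl(q\, S\,\Re(\bar q g)\bigr).
\end{equation*}
Here $C_+$ appears because we restrict to $L^2_+$ and use the real pairing. Boundedness of $J_q$ on $L^2_+$ is immediate from $\|\bar q f\|_{L^1}\le \sqrt{M(q)}\,\|f\|_{L^2}$ together with $|\iint a(x)b(y)\sgn(x-y)|\le \|a\|_{L^1}\|b\|_{L^1}$. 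Skewness of $\om$ is evident.

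\emph{Closedness.} I would not verify $d\om=0$ by hand. Instead I would note that $\om^\#$ is the pullback of the canonical form $\Re\lan f, ig\ran$ under the gauge transformation of \cite{ABANOV_BETTELHEIM_WIEGMANN}, so $\om^\#$ is closed on $L^2$. Its restriction to the linear subspace $L^2_+$ is therefore closed as well. A direct check is also feasible: $\om$ depends on $q$ only through the bilinear expression $\Re(\bar q f)$, so $d\om$ reduces to a cyclic sum that cancels by the antisymmetry of $\sgn$.

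\emph{Strong nondegeneracy.} This is the main step. Strongness means $J_q:L^2_+\to L^2_+$ is a bijection, and it suffices to prove coercivity $\om_q(f,-if)\ge c\|f\|^2$. Expanding,
\begin{equation*}
\om_q(f,-if)=\|f\|^2 \mp \iint \Re(\bar q f)(x)\,\Im(\bar q f)(y)\sgn(x-y)\,dx\,dy .
\end{equation*}
In the defocusing case I expect the double integral to have a favorable sign once the Hardy-space structure of $q$ and $f$ is used (by Fourier/Plancherel, exploiting the support of $\hat q,\hat f$ on $[0,\infty)$), which gives coercivity with constant $1$.

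In the focusing case I aim for the sharp bound
\begin{equation*}
\Bigl|\iint \Re(\bar q f)(x)\,\Im(\bar q f)(y)\sgn(x-y)\,dx\,dy\Bigr| \le \tfrac{M(q)}{2\pi}\,\|f\|_{L^2}^2 .
\end{equation*}
I would try to obtain this by writing the form as an area integral over the upper half-plane of a holomorphic quantity built from $q$ and $f$, for instance $(\int^z qf)$-type primitives. Carleman's inequality (the holomorphic isoperimetric inequality, with constant $1/(4\pi)$) then controls the area integral by the square of a boundary $L^1$ norm, which is at most $\sqrt{M}\,\|f\|$ by Cauchy--Schwarz. This yields the coercivity constant $1-M/2\pi>0$ on $B_{2\pi}$. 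Identifying precisely which holomorphic function to feed into Carleman is the step I expect to be hardest.

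\emph{Degeneracy for $M>M_*$.} I would take the equality cases of Carleman as a guide, namely the Lorentzian $q=c/(x+i)$, and combine it with the symmetry generators. Using scaling and Galilean/phase directions such as $f=iq$, $f=q+2xq'$ and $f=q'$, I would look for a nonzero $f\in L^2_+$ with $J_q f=0$, or show that coercivity fails, for suitable $q$ with mass slightly above $2\pi$. A perturbation of the Lorentzian, for example by a small additional bump, should then place such a $q$ in any ball $B_M$ with $M>2\pi$.

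\emph{Hamiltonian.} Expanding $H$ in \eqref{eqn:H(q) R} gives $dH_q(g)=\Re\lan q'\mp iqC_+(|q|^2),\,g'\mp i gC_+(|q|^2)\mp iqC_+(2\Re(\bar q g))\ran$. After integration by parts and the identities $C_+^*=C_+$ and $C_+-C_+^*=0$ on real functions, one has to verify that this equals $\om_q(q_t,g)$ with $q_t$ given by \eqref{CCMR}. The nonlocal term in $\om$ pairs with the $\Re(\bar q g)$ terms through $S\partial_x=\Id$. This is a lengthy but routine computation, which I would carry out first for Schwartz $q\in L^2_+$ and then extend by density.
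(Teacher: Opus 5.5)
\paragraph{The gap: coercivity fails.}
Your nondegeneracy step does not work, because the coercivity $\om_q(f,-if)\ge c\|f\|^2$ you aim for is false at large mass, in both the defocusing and the focusing case.

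First a sign correction: $\Re(\bar q\cdot(-if))=\Im(\bar qf)$, so $\om_q(f,-if)=\|f\|^2\pm I$, where $I=\iint \Re h(x)\,\Im h(y)\sgn(x-y)\,dx\,dy$ and $h=\bar q f$. Plancherel gives
\begin{equation*}
I=-2\pi\,\mathrm{p.v.}\!\int |\widehat h(\xi)|^2\,\tfrac{d\xi}{\xi}.
\end{equation*}
Since $\bar q\in L^2_-$, the function $h$ is in general not holomorphic and its spectrum lies on both half-lines. So $I$ has no sign, and Carleman's inequality does not apply to it.

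Concretely, take $c>0$.
\begin{itemize}
\item Defocusing: for $q=c(x+i)^{-2}$ and $f=(x+i)^{-1}$ one finds $h=c(x+i)(x^2+1)^{-2}$ and $I=\tfrac34M(q)$. Hence $\om_q(f,-if)=\pi-\tfrac34M(q)<0$ as soon as $M(q)>\tfrac{4\pi}3$.
\item Focusing: for $q=c(x+i)^{-1}$ and $f=(x+i)^{-2}$ one finds $I=-\tfrac38M(q)$. Hence $\om_q(f,-if)=\tfrac\pi2-\tfrac38M(q)<0$ for $\tfrac{4\pi}3<M(q)<2\pi$.
\end{itemize}
In both examples $|I|=\tfrac{3}{4\pi}M(q)\|f\|^2$, which also contradicts your proposed sharp bound with constant $\tfrac1{2\pi}$. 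So multiplication by $-i$ does not tame $\om_q$ on $B_{M_*}$, and Lax--Milgram is unavailable. The same examples show that ``showing coercivity fails'' cannot serve as a proof of degeneracy above $M_*$.

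\paragraph{What is missing.}
You must analyze only the kernel of your $J_q$ (the paper's $\Om(q)$), not bound the form for all $f$.

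Suppose $\Om(q)g=0$, i.e.\ $(1\mp2\Theta(q))g\perp L^2_+$. Set $G=\overline{(1\mp2\Theta(q))g}\in(L^2_-)^\perp$. Using $\Theta(q)^2=0$ and $\overline{\Theta(q)f}=-\Theta(\bar q)\bar f$, one gets $G\mp i\bar q\tilde\dd^{-1}(qG)\in L^2_-$ (Lemma~\ref{lem:lem3.2}). Pairing with $G$ gives
\begin{equation*}
0=\|G\|^2\mp\lan qG,\,i\tilde\dd^{-1}(qG)\ran .
\end{equation*}
Only now does the holomorphic product $qG$ appear; in $\om_q(f,-if)$ only $\bar qf$ occurs.
\begin{itemize}
\item Defocusing: $i\tilde\dd^{-1}$ has symbol $1/\xi>0$ on the spectrum of $qG$, so $G=0$.
\item Focusing: Carleman gives $\lan qG,i\tilde\dd^{-1}(qG)\ran\le\frac1{2\pi}\|qG\|_{L^1}^2\le\frac{M(q)}{2\pi}\|G\|^2$, which is \eqref{E:op bound}, so $G=0$ on $B_{2\pi}$.
\end{itemize}
Injectivity is then upgraded to bounded invertibility by Fredholm theory rather than coercivity. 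The operator $\Theta(q)$ is given by Hilbert--Schmidt kernels such as $\tfrac i4\sgn(x-y)q(x)\bar q(y)$, so $-i\Om(q)$ is a compact perturbation of the identity.

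\paragraph{The rest of your plan.}
\begin{itemize}
\item Degeneracy: your Lorentzian guess is right. At $q=\sqrt2(x+i)^{-1}$ one checks $\Om(q)(q+2xq')=0$. This $q$ has mass exactly $2\pi$, so it already lies in every $B_M$ with $M>2\pi$ and no perturbation is needed.
\item Closedness: your argument is fine.
\item Hamiltonian: a direct verification works, provided you follow the paper's convention $dH_q(g)=\om_q(g,q_t)$. Using $\om_q(q_t,g)$ instead yields the time-reversed flow.
\end{itemize}
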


Recall that the mapping of a Hamiltonian $H$ to the vector field $X$ that describes the corresponding flow is mediated by the symplectic form: $X$ is chosen so that $dH(\cdot) = \omega( \cdot, X)$.  (We follow the sign conventions of \cite{MR2269239}.)  Nondegeneracy of $\omega$ is crucial for the unique solvability of this equation.  Saying that $\omega$ is a \emph{strong} symplectic form means that the mapping from the functional $dH$ to the vector $X$ is a Hilbert-space isomorphism; see \cite{chernoff2006properties}.

In \cite{badreddine2024global,Rana2}, Badreddine initiated the analysis of \eqref{CCMR} on the torus and demonstrated that this equation is globally well-posed in $B_{M_*}$.  While the existence of a Lax pair and of infinitely many conservation laws did play key roles, the question of a Hamiltonian formulation did not arise in her work.  Let us address this question next.

First we observe that a direct implementation of \eqref{eqn:omega_hash} is impossible on the circle because there is no ordering of the points and so no analogue of the signum function.  Physically, the structure \eqref{eqn:omega_hash} arose from a gauge transformation of the standard Hilbert-space structure on $L^2(\R)$.  This gauge transformation does not carry over to the circle setting: for a generic $q$, it does not yield a single-valued function on the cosets $x+2\pi\Z$.

To discover the symplectic structure on $\T$, we follow a different path: We note that convolving an $L^1(\R)$ function with $\frac12\sgn(x)$ creates a primitive (antiderivative).  As a torus analogue, we choose $\tilde\partial^{-1}$ defined in \eqref{delta inverse} and \eqref{delta inverse'}.  Notice that we first project away the constant term in the Fourier series so as to guarantee the existence of a primitive.  In this way, we are lead to introduce
\begin{equation}\label{eqn:omega}
\om_q(f,g)=\Re\lan f,\Om (q)g\ran,
\end{equation}
with
\begin{equation}\label{eqn:S_T}
\Om (q)g:=i(1\mp2 C_+ \Theta (q))g \qtq{and} \Theta (q)g:=iq\tilde{\dd}^{-1}\Re(\bar{q}g)
\end{equation}
as our candidate for a symplectic structure on $L^2_+(\T)$.  In fact, through \eqref{delta inverse}, these formulae give a unified presentation of the two geometries. 

We will show that \eqref{eqn:omega} is indeed a strong symplectic form on $B_{M_*}$ in the torus case and that it degenerates on any larger ball.  Moreover, it was through the analysis of this case that we uncovered a direct connection between the critical mass $M_*$ and Carleman's proof of the isoperimetric inequality; see Lemma~\ref{L:Carleman}.

In view of Theorem~\ref{T:R}, it is natural to imagine that choosing \eqref{eqn:H(q) R} as a Hamiltonian would lead to the dynamics \eqref{CCMR} on the torus.  This is \emph{not} the case!  In Section~\ref{sec:flows}, we will show that this choice of Hamiltonian leads to the dynamical equation
\begin{equation}\label{195}
i q_t = - q_{xx} \pm 2i q \partial_x C_+(|q|^2) \pm \tfrac{i}{2\pi} M(q) q_x \pm\tfrac{3}{\pi} P(q) q + \tfrac{3}{4\pi^2} M(q)^2 q,
\end{equation}
where $M(q)$ denotes the mass \eqref{M defn} and $P(q)$ denotes the momentum, that is, the generator of the translation symmetry under our symplectic structure; see \eqref{Hamiltonians on T} and \eqref{En defn} for further information.

On the one hand, the three new terms that appear in \eqref{195} relative to \eqref{CCMR} are very mild: they generate translations and phase rotations at rates that are dictated by conserved quantities.  On the other hand, well-posedness in $L^2$ is ruined!  The problem is the appearance of momentum in the rate of phase rotation; this functional is naturally defined on $H^{1/2}_+$ and does not extend continuously to $L^2_+$.
It is not possible to remove all of the additional terms in \eqref{195} by changing the Hamiltonian.  We propose \eqref{CCMT} below as the proper analogue of \eqref{CCMR} in the torus setting.

\begin{theorem}[Hamiltonian formulation on $\T$]\label{T:T}
The bilinear form $\om$ defined in \eqref{eqn:omega} defines a strong symplectic form on $B_{M_*}$.  In the focusing case the form is degenerate on any ball $B_M$ with $M>M_*$.  Moreover, under this structure, the Hamiltonians
\begin{align}\label{H on T}
H(q)&=\tfrac12\|q'\mp i qC_+(|q|^2)\|_{L^2}^2\mp \tfrac{3}{2\pi} P(q) M(q) - \tfrac1{8\pi^2}M(q)^3
\end{align}
generate the flows
\begin{equation}\label{CCMT}\tag{CCM${}_\T$}
i\dd_t q+q_{xx}\mp2iq\dd_x C_+(|q|^2)\pm \tfrac{i}\pi M(q)q_x=0.
\end{equation}
\end{theorem}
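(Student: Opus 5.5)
The proof splits into two parts. The first is the nondegeneracy and strong-symplecticity of $\om$ on $B_{M_*}$, together with degeneracy above $M_*$. The second is the identification of the Hamiltonian vector field of \eqref{H on T}.

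\textbf{Symplectic structure.} Throughout, $\Om(q)$ is regarded as an $\R$-linear operator on $L^2_+(\T)$. Writing $\Om(q)=i(1\mp 2C_+\Theta(q))$, I would first check that $\om_q$ is antisymmetric. This amounts to showing that $C_+\Theta(q)$ is $\Re\lan\cdot,i\,\cdot\ran$-symmetric. On $L^2_+$ we have
\[
\Re\lan f,iC_+\Theta g\ran=\Re\lan f,i\Theta g\ran=-\Re\int \overline{\bar q f}\,\tilde\dd^{-1}\Re(\bar q g),
\]
which equals $-\int \Re(\bar q f)\,\tilde\dd^{-1}\Re(\bar q g)$. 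This expression is antisymmetric in $(f,g)$ because $\tilde\dd^{-1}$ is skew-adjoint, and that is exactly what is needed.

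Closedness follows by the standard argument: the coefficients depend polynomially on $q$, and one verifies that the cyclic sum of $d\om$ vanishes. This is routine but tedious, so I would use the expression $\om_q(f,g)=\Re\lan f,ig\ran\pm 2\int\Re(\bar qf)\tilde\dd^{-1}\Re(\bar qg)$ and differentiate in $q$.

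Strongness means that $\Om(q)$ is an isomorphism of $L^2_+$. Since $\Theta(q)$ is compact for $q\in L^2_+$ (it factors through the rank-reducing map $g\mapsto \tilde\dd^{-1}\Re(\bar q g)\in H^1\hookrightarrow L^\infty$, followed by multiplication by $q$), $\Om(q)$ is Fredholm of index zero. Therefore invertibility reduces to injectivity. Suppose $g=\pm2C_+\Theta(q)g$. Pairing with $ig$ and taking real parts gives
\[
\|g\|_{L^2}^2=\pm 2\int\Re(\bar q g)\,\tilde\dd^{-1}\Re(\bar q g)\cdot(\text{sign}),
\]
which vanishes by skewness. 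This pairing alone is therefore not enough.

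Instead, write $u=\Re(\bar q g)$ and $\phi=\tilde\dd^{-1}u$, so that the kernel equation reads $g=\mp 2C_+(q\phi)$. Substituting back gives $u=\mp2\Re(\bar q\,C_+(q\phi))$. In the defocusing case, pairing this with $\phi$ and using positivity yields $u=0$ and hence $g=0$. In the focusing case, the same pairing yields an inequality of the form
\[
\|\phi'\|^2\le \tfrac{M(q)}{2\pi}\|\phi'\|^2 ,
\]
up to the mean-zero correction. The key input here is Lemma~\ref{L:Carleman}, whose content I expect to be the sharp bound $\big|\int |q|^2\phi\ldots\big|\le\frac{M}{2\pi}(\cdots)$ coming from Carleman's inequality. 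This forces $\phi=0$ when $M<2\pi$.

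For degeneracy above $M_*$, I would build an explicit kernel element. Take $q$ to be a (rescaled) soliton-type function, e.g.\ $q=c\,\frac{1}{1-re^{ix}}$-type profiles, for which Carleman is saturated, and tune $c$ so that $M(q)$ is any value above $2\pi$. I expect the main obstacle to be this sharp analysis, namely matching the threshold exactly with the isoperimetric constant.

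\textbf{Flows.} I would compute $dH$ term by term and solve $dH(f)=\om_q(f,X)$, i.e.\ $\Om(q)X=\nabla H$. Rather than inverting $\Om$, the cleaner route is to verify the equation directly: plug in the candidate
\[
X=i q_{xx}\pm 2q\dd_xC_+(|q|^2)\mp\tfrac1\pi M q_x
\]
and check that $\Re\lan f,\Om(q)X\ran=dH|_q(f)$ for all $f\in H^\infty_+$. This uses the following facts, which I would take from the earlier generator identifications (\eqref{Hamiltonians on T}, \eqref{En defn}):
\begin{itemize}
\item $M$ generates phase rotation, so $\Om(q)(iq)=\nabla M$ up to normalization;
\item $P$ generates translation, so $\Om(q)q_x=\nabla P$;
\item the quadratic-in-derivative part of $H$ produces the dynamics \eqref{195}.
\end{itemize}
Then $d(PM)=M\,dP+P\,dM$ and $d(M^3)=3M^2\,dM$ contribute translations at rate proportional to $M$ and phase rotations at rates proportional to $P$ and $M^2$. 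These cancel the $\pm\frac{3}{\pi}Pq$ and $\frac{3}{4\pi^2}M^2q$ terms of \eqref{195} and adjust the $q_x$ coefficient from $\pm\frac{i}{2\pi}M$ to $\pm\frac i\pi M$. Checking these coefficients is pure bookkeeping.
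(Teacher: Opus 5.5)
Your antisymmetry, closedness and Fredholm reductions are fine and match the paper. The genuine gap is the injectivity of $\Om(q)$, i.e.\ nondegeneracy on $B_{M_*}$.

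The kernel equation is $g=\pm2C_+\Theta(q)g=\pm2i\,C_+(q\phi)$, with $u=\Re(\bar q g)$ and $\phi=\tilde\dd^{-1}u$. You dropped the factor $i$, and this is not cosmetic.

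\begin{itemize}
\item With your version, pairing $u=\mp2\Re(\bar q\,C_+(q\phi))$ against $\phi$ gives $0=\int u\phi=\mp2\|C_+(q\phi)\|_{L^2}^2$. (The left side vanishes because $\phi$ is periodic with mean zero and $\phi'=u-\widehat u(0)$.) This would force $g=0$ for \emph{every} $q$, in both cases, and would contradict the degenerate point $q\equiv1$.
\item With the correct equation, $u=\mp2\Im(\bar q\,C_+(q\phi))$, and the same pairing returns the tautology $0=\mp2\Im\|C_+(q\phi)\|_{L^2}^2$.
\end{itemize}

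So neither case is settled. The inequality $\|\phi'\|^2\le\frac{M}{2\pi}\|\phi'\|^2$ is neither derived nor in a usable norm, since $\phi'$ is only in $L^1$.

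The missing idea is to produce a holomorphic $L^1$ function to which Carleman applies. The paper does this by conjugating.
\begin{itemize}
\item Set $G:=\overline{(1\mp2\Theta(q))g}$. It lies in $(L^2_-)^\perp$ precisely because $\Om(q)g=0$, and it satisfies $\Re(qG)=u$.
\item The kernel condition becomes $G\mp i\bar q\tilde\dd^{-1}(qG)\in L^2_-$ (Lemma~\ref{lem:lem3.2}).
\item Pairing this with $G$ gives $0=\|G\|_{L^2}^2\mp\|\tilde\dd^{-1}(qG)\|_{\dot H^{1/2}}^2$, which settles the defocusing case at once.
\item In the focusing case, $e^{-ix}qG\in L^1_+(\T)$. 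So \eqref{E:Carleman'} and Cauchy--Schwarz give $\|\tilde\dd^{-1}(qG)\|_{\dot H^{1/2}}^2\le\frac1{2\pi}M(q)\|G\|_{L^2}^2$, which is \eqref{E:op bound}. Hence $G=0$ when $M(q)<2\pi$.
\item Then $g=0$, since $(1\mp2\Theta(q))^{-1}=1\pm2\Theta(q)$.
\end{itemize}

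\textbf{Degeneracy above $M_*$.} You need not, and by amplitude tuning generally cannot, find a kernel at every mass above $2\pi$. For $q\equiv c$ and $G=e^{ikx}$ one gets $G-i\bar q\tilde\dd^{-1}(qG)=(1-c^2/k)e^{ikx}$, so constants are degenerate only at $M=2\pi k$. A single degenerate point of mass exactly $2\pi$ lies in every $B_M$ with $M>2\pi$. The choice $q\equiv1$, $G=e^{ix}$ works, since $\tilde\dd^{-1}e^{ix}=-ie^{ix}$.

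\textbf{Compactness.} Compactness of $C_+\Theta(q)$ does not come from $H^1\hookrightarrow L^\infty$; $\tilde\dd^{-1}\Re(\bar qg)$ is only in $W^{1,1}$. It comes from the two integral kernels $q(x)\overline{q(y)}k(x-y)$ and $q(x)q(y)k(x-y)$, with $k$ bounded, lying in $L^2_{x,y}$.

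\textbf{Flows.} Your bookkeeping is correct. With $\na_\om M=-2iq$ and $\na_\om P=q'$ (which is what $\Om(q)q_x=2\tfrac{\dl P}{\dl\bar q}$ gives), the $PM$ and $M^3$ corrections cancel the $Pq$ and $M^2q$ terms of \eqref{195}. They also change $\pm\frac1{2\pi}Mq_x$ into $\mp\frac1\pi Mq_x$ in $\na_\om H$, matching \eqref{5:09}.

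However, \eqref{195} is not an earlier result you can cite. Computing the symplectic gradient of $\frac12\|q'\mp iqC_+(|q|^2)\|_{L^2}^2$ through the non-explicit $J(q)$ is the actual content of this half. The paper proceeds as follows:
\begin{itemize}
\item It verifies $\Om(q)X_\ka=2\tfrac{\dl\beta_\ka}{\dl\bar q}$ for an explicit candidate $X_\ka$. The key is $\Re(\bar qX_\ka)=(|m_\ka|^2)'$, which makes $\Theta(q)X_\ka$ computable; the torus corrections come from the zero modes in \eqref{Cid} and \eqref{delta inverse'}.
\item It then expands in $1/\ka$.
\item It uses Lemma~\ref{L:P and H} to identify $\frac12\E_2$ and $P$ with the stated expressions.
\end{itemize}
Your plug-in-and-check route is also viable, since $\Re(\bar qX)$ is again an exact derivative for your candidate $X$. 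But it must actually be carried out.
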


In Section~\ref{sec:flows}, we will verify two alternate expressions for the Hamiltonian \eqref{H on T}:
\begin{align}\label{eqn:H(q) T}
H(q)&=\tfrac12\|q'\mp i qC_+(|q|^2)\|_{L^2}^2\pm \tfrac{3}{4\pi} \|q\|_{L^2}^2\|q\|_{\dot{H}^{1/2}}^2-\tfrac{3}{8\pi}\|q\|_{L^2}^2\|q\|_{L^4}^4+\tfrac{1}{16\pi^2}\|q\|_{L^2}^6\notag\\
&= \tfrac12\|q\|_{\dot{H}^{1}}^2\mp\tfrac12\| C_+(|q|^2)\|_{\dot{H}^{1/2}}^2\mp\tfrac14\|q^2\|_{\dot{H}^{1/2}}^2\pm \tfrac1{2\pi}\|q\|_{L^2}^2\|q\|_{\dot{H}^{1/2}}^2\\
&\quad+\tfrac{1}{6}\|q\|_{L^6}^6-\tfrac1{4\pi}\|q\|_{L^2}^2\|q\|_{L^4}^4+\tfrac{1}{12\pi^2}\|q\|_{L^2}^6  . \notag
\end{align}

The existence of an infinite sequence of conserved quantities for the continuum Calogero Moser models has been known for some time; these are most simply written as
\begin{align}\label{En defn}
\mathcal E_n(q) := \langle q,\Lc_q^n q\rangle \qtq{where} \Lc_q :=-i\dd_x \mp qC_+\bar{q}
\end{align}
denotes the Lax operator associated with these models; see \cite{badreddine2024global,gerard2024calogero} and Section~\ref{S:3}.   Due to the presence of ever more derivatives, these functionals are not defined on the natural phase space $L^2_+$.  By contrast, their generating function
\begin{equation}\label{beta as GF}
\beta(\ka,q)=\sum_{n\geq0}(-1)^n\ka^{-(n+1)}\mathcal{E}_n(q) = \langle q, (\Lc_q + \kappa)^{-1} q\rangle
\end{equation}
has very good analytical properties on $L^2_+$; see \cite{killip2023scaling}.

In Proposition~\ref{P:beta comm}, we prove that $\beta(\kappa,q)$ and $\beta(\vk,q)$ Poisson commute throughout $B_{M_*}$ for all pairs $\kappa,\vk \geq 1$.  In this way, we also see that the quantities $\mathcal E_n(q)$ are mutually commuting.

The Poisson commutativity of the generating functions provides a key foundation for our final endeavor, which is to prove well-posedness of \eqref{CCMR} and \eqref{CCMT} by the method of commuting flows.  Well-posedness of \eqref{CCMR} was previously shown in \cite{badreddine2024global,killip2023scaling} in each geometry.  We have two goals in pursuing this new method of proof: it provides a serious test of our understanding of the Hamiltonian structure and it provides a validated framework for analyzing dynamics in the Gibbs states of these models.

The central figures in the method of commuting flows are a one-parameter family of Hamiltonians $H_\kappa$ that both approximate the true Hamiltonian and give rise to commuting flows.  Our particular choices are given in \eqref{eqn:H_ka R} and \eqref{eqn:H_ka T}.  In the line setting, this choice follows the scheme laid out in previous implementations of the method \cite{MR4304314,MR5031430,MR4726498,killip2024sharp,killip2019kdv}, which is based on a direct rearrangement of \eqref{beta as GF}; see \eqref{11:47}.  However, in the torus case, this approach fails (for the first time in our experience!).  We rely crucially on the introduction of the final term in \eqref{eqn:H_ka T} to prove convergence of the vector fields.  It is also essential to the method that this renormalization produce a commuting flow whose symplectic gradient we can compute.

The convergence of the $H_\kappa$ vector field to that of $H$ is documented in \eqref{H diff}.  Of necessity, there is a considerable loss of regularity.  Moreover, this convergence is only uniform over equicontinuous sets (reflecting the fact that we are working at critical regularity).  For both reasons, it is essential to understand the $L^2$-equicontinuity properties of orbits.  This subject is taken up in the latter part of Section~\ref{S:3}, beginning with Definition~\ref{D:equi}.  

To demonstrate equicontinuity, we must understand how the $L^2$ norm is distributed across Fourier space.  This connects directly to constant-coefficient operators such as $\Lc_0$. On the other hand, the conserved quantities of our hierarchy are adapted to the Lax operator $\Lc_q$ that differs from $\Lc_0$ by a perturbation that is of equal order (in terms of scaling).  Theorem~\ref{T:equiv} demonstrates a new equivalence of the Sobolev spaces adapted to $\Lc_0$ and $\Lc_q$ with good uniformity properties across the phase space $B_{M_*}$.  This provides a solid foundation for an efficient demonstration of equicontinuity and will doubtlessly be useful in future investigations of the CCM hierarchies.

Combining the convergence of the vector fields with such equicontinuity properties, we will prove the following in Section~\ref{sec:convergence}:

\begin{theorem}[Well-posedness]\label{T:GWP}
The flows \eqref{CCMR} and \eqref{CCMT} are each globally well-posed on $B_{M_*}$.   Moreover, if $M<M_*$ and $Q\subset B_M$ is $L^2$-equicontinuous, then the set of orbits
$$
Q_{\ast}:=\bigl\{e^{t\na_\om H}q_0:\,q_0\in Q,\, t\in \R \bigr\}
$$
is also $L^2$-equicontinuous.
\end{theorem}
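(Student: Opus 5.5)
We will follow the method of commuting flows. The plan has four steps: approximate the Hamiltonian, control equicontinuity of orbits, converge the approximate flows, and then conclude well-posedness.

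\emph{Step 1: approximate flows.} For each $\kappa\geq1$ we introduce the approximate Hamiltonians $H_\kappa$ of \eqref{eqn:H_ka R} and \eqref{eqn:H_ka T}. These are built from $\beta(\kappa,q)$ together with $M(q)$, and in the torus case also from the extra renormalizing term. Because $\beta(\kappa,\cdot)$ is real-analytic on $B_{M_*}$ and $\om$ is a strong symplectic form there (Theorems~\ref{T:R} and~\ref{T:T}), the vector field $\na_\om H_\kappa$ is locally Lipschitz on $L^2_+$. Mass is conserved, so the $H_\kappa$ flow stays in $B_M$ and is globally well-posed on $B_{M_*}$.

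By Proposition~\ref{P:beta comm}, the flows of $H_\kappa$ and $H_\vk$ commute, and both commute with the flow of $H$ on smooth data. Each of these flows conserves every $\beta(\vk,\cdot)$.

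\emph{Step 2: equicontinuity.} Let $Q\subset B_M$ with $M<M_*$ be equicontinuous. Theorem~\ref{T:equiv} shows that $\langle q,(\Lc_q+\kappa)^{-1}q\rangle$ controls the $L^2$ mass at frequencies $\gtrsim\kappa$, uniformly on $B_M$. This quantity is exactly $\beta(\kappa,q)$, up to the subtraction $M/\kappa$ handled by mass conservation.

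Since $\beta$ is conserved by every flow in the hierarchy, high-frequency tails are controlled uniformly along orbits. This yields the second statement of the theorem. The same argument applies to orbits of $H_\kappa$ and to the mixed flows $e^{t\na_\om(H-H_\kappa)}$, for smooth data first and then by density.

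\emph{Step 3: convergence.} We write
\[
e^{t\na_\om H}=e^{t\na_\om(H-H_\kappa)}\circ e^{t\na_\om H_\kappa},
\]
which is valid by commutativity. We then show that $e^{t\na_\om(H-H_\kappa)}q\to q$ as $\kappa\to\infty$, uniformly over equicontinuous subsets of $B_M$ and for $|t|\le T$. The tool is the estimate \eqref{H diff}, which bounds $\na_\om(H-H_\kappa)$ in a weak norm such as $H^{-s}$ by $o(1)$ uniformly on equicontinuous sets. Integrating this in time gives weak-norm convergence. Equicontinuity from Step~2 then upgrades it to $L^2$ convergence.

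Consequently, $e^{t\na_\om H_\kappa}q_0$ is Cauchy in $C([-T,T];L^2)$, uniformly for $q_0$ in compact or equicontinuous sets. The limit defines the flow. Continuous dependence follows because convergent sequences of initial data form equicontinuous sets. Globality is automatic since mass is conserved, and the limit agrees with smooth solutions.

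\emph{Main obstacle.} I expect the hardest part to be the vector-field convergence \eqref{H diff} in the torus case. Here the naive $H_\kappa$ fails, and the renormalizing term must be shown both to commute with the hierarchy and to cancel the divergent mass/momentum contributions; the latter is what produces the $\tfrac{i}{\pi}M(q)q_x$ term in \eqref{CCMT}.

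I would first expand $(\Lc_q+\kappa)^{-1}$ via a resolvent identity around $\Lc_0$. I would then compute the symplectic gradient of $\beta$ explicitly using $\Om(q)^{-1}$. Finally, I would check that the remainder is $O(\kappa^{-1})$ in $H^{-s}$, modulo tail terms controlled by equicontinuity.
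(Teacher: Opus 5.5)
\paragraph{Gap: Step 3 relies on flows you have not constructed.} Your Step~3, together with the claim in Step~1 that the $H_\ka$ flows commute with ``the flow of $H$ on smooth data'', treats $e^{t\na_\om H}$ and the difference flow $e^{t\na_\om(H-H_\ka)}$ as existing objects that commute with $e^{t\na_\om H_\ka}$. Nothing available supplies them.
\begin{itemize}
\item The $H$ flow is what you are trying to construct. A smooth-data theory for \eqref{CCMT} would have to be imported, e.g.\ Badreddine's torus theory followed by a translation at mass-dependent speed.
\item The difference flow is worse. Its vector field contains the full derivative nonlinearity $iq''\pm2qC_+(|q|^2)'\mp\tfrac1\pi M(q)q'$ of $\na_\om H$, plus all the $H_\ka$ terms. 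So its well-posedness, even for smooth data, is a separate PDE problem that you never address.
\item Poisson commutativity (Proposition~\ref{P:beta comm}) only yields commuting flows once both flows are known to be well posed on a common space.
\item ``The limit agrees with smooth solutions'' also presupposes that external smooth theory. Without it, nothing in your argument shows that the limit solves $\partial_t q=\na_\om H(q)$.
\end{itemize}

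\paragraph{How the paper avoids this.} The paper never uses the $H$ flow. It compares two regularized flows:
\[ e^{t\na_\om H_\ka}q_0-e^{t\na_\om H_\vk}q_0=\bigl(e^{t\na_\om(H_\ka-H_\vk)}-1\bigr)\circ e^{t\na_\om H_\vk}q_0 . \]
Here $H_\ka-H_\vk$ generates an $L^2_+$-well-posed flow of the same type as in Theorem~\ref{thm:GWP_ka}. All relevant orbits stay in the equicontinuous set $\{e^{s\na_\om H_\ka}e^{t\na_\om H_\vk}q_0\}$. Applying \eqref{H diff} with both $\ka$ and $\vk$ gives the Cauchy property in $C_tH^{-5}$, and equicontinuity upgrades this to $C_tL^2$. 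The limit is then shown to satisfy the integral equation using \eqref{H diff} together with the Lipschitz bound \eqref{H Lip}; your plan has no counterpart to this step. Equicontinuity of the $H$-orbits then passes to $L^2$-limits.

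Your Step~2 is fine, and somewhat lighter than the paper's Lax-pair route. The identity $M-\ka\beta(\ka+1,q)=\langle q,\tfrac{\Lc_q+1}{\Lc_q+1+\ka}q\rangle$, combined with \eqref{<S<} and operator monotonicity, needs only the form-level equivalence. So replacing $H$ by $H_\vk$ in Step~3 repairs the argument.

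\paragraph{Two smaller corrections.}
\begin{itemize}
\item $\na_\om H_\ka$ is not Lipschitz on $L^2_+$. It contains the transport term $-\ka q'$, coming from $\ka\mathcal E_1$ inside $\mathcal E_2^\ka$. This is why the paper works with the Duhamel form \eqref{eqn:duhamel}.
\item The $\tfrac{i}{\pi}M(q)q_x$ term in \eqref{CCMT} comes from the choice of $H$ in \eqref{H on T}, not from the renormalizing term in \eqref{eqn:H_ka T}. That term is there to make \eqref{H diff} hold.
\end{itemize}
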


The remainder of the paper is structured as follows:  In Section~\ref{S:Note}, we review basic notations.  In Section~\ref{S:3}, we describe the Sobolev spaces adapted to the Lax operator, as well as a corresponding theory of equicontinuity.  In Section~\ref{sec:construction}, we verify that \eqref{eqn:omega} does indeed define a strong symplectic form on $B_{M_*}$ in both geometries.  We also show the necessity of the mass restriction in the focusing case.  In Section~\ref{sec:flows}, we prove that the Hamiltonians \eqref{eqn:H(q) R} and \eqref{H on T} do indeed generate the flows \eqref{CCMR} and \eqref{CCMT}.  Moreover, we derive the flow associated to the generating function $\beta(\kappa,q)$ and so the flows of the full hierarchy.  We also provide Lax pair formulations for all these flows.  A short Section~\ref{sec:commute} then verifies Poisson commutativity of all the conserved quantities.  Regularized Hamiltonians $H_\kappa(q)$ are introduced in Section~\ref{sec:GWP_kappa} and the corresponding flows are shown to be globally well-posed.  By showing convergence of the $H_\kappa$ flows as $\kappa\to\infty$, we deduce the global well-posedness of \eqref{CCMR} and \eqref{CCMT} in Section~\ref{sec:convergence}. 

\subsection*{Acknowledgements}
R. K. was supported by NSF grant DMS-2452346 and the project ANR-25-CFFS-0004 ``PhysMathEDPInteg'' of the France 2030 program. M. V. was supported by NSF grant DMS-2348018.

\section{Notations}\label{S:Note}

Irrespective of the geometry, we define the Fourier transform via
\begin{align}\label{FT}
\widehat f(\xi) = \tfrac{1}{2\pi} \int e^{-i\xi x} f(x) \,dx .
\end{align}
Correspondingly, 
\begin{align}\label{IFT}
f(x) = \int \widehat f(\xi)  e^{i\xi x} \,d\xi \qtq{and} f(x) = \sum_{\xi\in\Z} \widehat f(\xi) e^{i\xi x} ,
\end{align}
on the line and circle, respectively. Similarly, 
\begin{gather}\label{Plancherel}
\| f \|_{L^2(\R)}^2 = 2\pi \| \widehat f\, \|_{L^2(\R)}^2,\qquad
	 \| f \|_{L^2(\T)}^2 = 2\pi \| \widehat f \, \|_{\ell^2(\Z)}^2 , \\
\label{Convolution}
\widehat{fg\mkern 2mu} = \widehat{f} * \widehat{g},
	\qtq{and} \widehat{f * g} = 2\pi \widehat f \; \widehat g .
\end{gather}

Irrespective of the geometry, the two Hardy spaces $L^2_\pm$ are the closed subspaces of $L^2$ defined by
\begin{equation}\label{Hardy}
L^2_\pm := \{ f \in L^2 : \widehat f(\xi) =0 \text{ whenever $\mp \xi>0$}\}.
\end{equation}
The corresponding orthogonal projections
\begin{align}\label{CSP}
\widehat{ C_+ f }(\xi)= 1_{[0,\infty)}(\xi) \widehat f(\xi) \qtq{and}  \widehat{ C_- f }(\xi)= 1_{(-\infty,0]}(\xi) \widehat f(\xi)
\end{align}
are known as the Cauchy--Szeg\H{o} projections.  The two spaces are complementary on the line, but not on the circle; indeed,
\begin{align}\label{Cid}
C_++C_-=1 \quad \text{on $\R$,\quad but} \quad C_+ f + C_- f = f + \widehat f(0) \quad \text{ on $\T$.}
\end{align}
Notice that in the latter case, $f\mapsto \widehat f(0)$ constitutes the orthogonal projection onto constants.

For $f\in L^1$, we define
\begin{equation}\label{delta inverse}
\begin{aligned}
\bigl[\tilde{\dd}^{-1}f\bigr](x)&:=\tfrac12\int_\R \sgn(x-y) f(y)\, dy  \qquad\, \text{on $\R$},\\
\bigl[\tilde{\dd}^{-1}f\bigr](x)&:=\tfrac12\int_0^{2\pi} \bigl(1-\tfrac{y}{\pi}) f(x-y)\, dy  \quad \text{on $\T$}.
\end{aligned}
\end{equation}
It is easy to see that this is a bounded operator from $L^1$ to $L^\infty$ and that it maps real-valued functions to real-valued functions.  We also observe that
\begin{equation}\label{delta inverse'}
\begin{aligned}
\bigl[\tilde{\dd}^{-1}f\bigr]'&=f  &&\text{or equivalently}  &&\widehat{\widetilde\partial^{-1}f} (\xi) = \tfrac{1}{i\xi} \widehat f (\xi)
	&&\text{on $\R$ and} \\
\bigl[\tilde{\dd}^{-1}f\bigr]'&=f- \widehat f(0) &&\text{or equivalently}  &&\widehat{\widetilde\partial^{-1}f} (n) = \tfrac{1-\delta_{n0}}{in} \widehat f (n)
	&&\text{on $\T$}.
\end{aligned}
\end{equation}
These formulations also make it clear that $\tilde{\dd}^{-1}$ defines an anti-selfadjoint operator on $L^2_+$, which is bounded on the circle but is unbounded on the line.

We define the Sobolev spaces $H^s(\R)$ and $\dot H^s(\R)$ in the usual fashion, with norms 
$$
\|f\|_{H^s}^2 = 2\pi\int (1 +|\xi|^2)^s \bigl|\widehat f(\xi)\bigr|^2\,d\xi \qtq{and}
\|f\|_{\dot H^s}^2 = 2\pi \int |\xi|^{2s} \bigl|\widehat f(\xi)\bigr|^2\,d\xi . 
$$ 
On the torus, the definitions are parallel with integrals replaced by sums.  The Hardy--Sobolev spaces $H^s_+$ are the $H^s$-completion of $H^s\cap L^2_+$.

The Hilbert--Schmidt classes of operators on the Hilbert spaces $L^2$ and $L^2_+$ will be denoted $\I_2$.

As we are working in a Hilbert space, it is natural to conflate the tangent space at each point with the Hilbert space itself.  Accordingly, the directional derivative of a $C^1$ function $F$ on $L^2_+$ in the direction $f\in L^2_+$ at the point $q\in L^2_+$ is given by
$$
\bigl[D_f F \bigr](q) := dF|_q(f):= \lim_{\eps\to0}\f{F(q+\eps f)-F(q)}{\eps},
$$
which is an $\R$-linear map of $f$.

Such directional derivatives admit Riesz representatives in the form of Wirtinger derivatives.  Concretely, we may write
\begin{align}\label{Wirtinger}
d F|_q(f)=\int f(x)\,\left.\f{\dl F}{\dl q}\right|_q(x)+\overline{f(x)}\,{\left.\f{\dl F}{\dl \bar{q}}\right|_q}(x)\,dx= \bigl\langle \bar f,\tfrac{\dl F}{\dl q}\bigr|_q \bigr\rangle +  \bigl\langle f, \tfrac{\dl F}{\dl \bar q}\bigr|_q\bigr\rangle,
\end{align}
where $\f{\dl F}{\dl\bar{q}}$ and $\f{\dl F}{\dl q}$ are the unique elements in $L^2_+$ and $L^2_-$, respectively, so that \eqref{Wirtinger} holds for all $f\in L^2_+$.  Note that for real-valued functions $F$ we have $\f{\dl F}{\dl{q}}=\overline{\f{\dl F}{\dl\bar{q}}}$ and so we may write
\begin{align}\label{1:51}
d F|_q(f)=2\Re\,\bigl\lan f,{\tfrac{\dl F}{\dl \bar{q}}\bigr|_q}\bigr\ran.
\end{align}

\section{Sobolev spaces adapted to the Lax operator}\label{S:3}

The centerpiece of this section is a strong quantitative identification of the Sobolev spaces adapted to the Lax operator
\begin{equation}\label{E:L in S2}
\Lc_q =-i\dd_x \mp qC_+\bar{q},
\end{equation}
which defines a selfadjoint operator on $L^2_+$. Concretely, we will prove the following:
 
\begin{theorem}\label{T:equiv}
If $q\in B_{M_*}$ and $\kappa\geq 1$, then $\Lc_q+\kappa$ is positive definite and boundedly invertible.  Moreover, for each $M< M_*$, there is a constant $C_M$ so that
\begin{equation}\label{E:equiv}
C_M^{-1}  \|(\Lc_0+\ka)^{s}f \|_{L^2_+} \leq \|(\Lc_q+\ka)^{s}f \|_{L^2_+}\leq C_M \|(\Lc_0+\ka)^{s}f \|_{L^2_+} 
\end{equation}
for all $-1\leq s\leq 1$, $\kappa\geq 1$, all $f\in H^s_+$, and all $q\in B_M$. 
\end{theorem}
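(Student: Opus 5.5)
The plan is to reduce everything to the endpoint $s=1$ and then use duality and interpolation. Positivity comes from a quadratic-form bound, and the only serious input for it is a sharp inequality at the critical mass.

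\textbf{Positivity and invertibility.} The defocusing case is easy: there $\Lc_q=\Lc_0+qC_+\bar q\geq \Lc_0\geq 0$, so $\Lc_q+\kappa\ge\kappa\ge1$. In the focusing case I need the form bound
\[
\langle f, qC_+\bar q f\rangle=\|C_+(\bar q f)\|_{L^2}^2\le \tfrac{M(q)}{2\pi}\,\langle f,(\Lc_0+\kappa) f\rangle ,
\]
valid for all $\kappa\ge1$ with a constant uniform in $\kappa$. On the line I would get this from the estimate $\|C_+(\bar q f)\|^2\le \frac{M}{2\pi}\langle f,-i f'\rangle$ for $f\in H^{1/2}_+$. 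This estimate follows by writing $\widehat{\bar q f}$ as a convolution of $\widehat{\bar q}$ (supported on $\xi\le0$) with $\widehat f$ (supported on $\xi\ge0$). Positive output frequency $\eta$ only sees $\widehat f(\xi)$ with $\xi\ge\eta$. Cauchy--Schwarz then gives
\[
|\widehat{C_+(\bar q f)}(\eta)|^2\le \|\widehat q\|_{L^2}^2\int_{\xi\ge \eta}|\widehat f(\xi)|^2\,d\xi ,
\]
and integrating in $\eta\ge0$ produces $\int \xi|\widehat f|^2$. On the torus the same computation gives $\sum_{n\ge0}(n+1)|\widehat f(n)|^2$. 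That is still bounded by $\langle f,(\Lc_0+\kappa)f\rangle$ since $\kappa\ge1$, which is exactly why the hypothesis $\kappa\geq 1$ appears. The sharp constant presumably corresponds to the Carleman-type Lemma~\ref{L:Carleman} alluded to in the introduction. With this bound, for $M(q)\le M<2\pi$ we get
\[
(1-\tfrac{M}{2\pi})(\Lc_0+\kappa)\le \Lc_q+\kappa\le (1+\tfrac{M}{2\pi})(\Lc_0+\kappa)
\]
as quadratic forms. Hence $\Lc_q+\kappa$ is positive definite with bounded inverse. Since $\Lc_q$ is selfadjoint on the common form domain $H^{1/2}_+$, this is exactly \eqref{E:equiv} at $s=\frac12$, and by duality at $s=-\frac12$. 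I expect getting the sharp constant $2\pi$ here, uniformly in $\kappa\ge1$, to be the main obstacle. If the Fourier argument loses a constant, I would instead prove it via Carleman's inequality, $\|F\|_{L^2(\partial)}$-type bounds for holomorphic $F^2$ with $F=q\cdot$(a primitive of $f$).

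\textbf{Endpoint $s=1$.} Here I would show $\|(\Lc_q+\kappa)f\|\approx\|(\Lc_0+\kappa)f\|$. The upper bound needs
\[
\|qC_+(\bar q f)\|_{L^2}\lesssim M\,\|f\|_{L^\infty}\lesssim M\,\|f\|_{H^{1/2+}} ,
\]
which is not enough, since $\|f\|_{L^\infty}\lesssim \kappa^{-1/2}\|(\Lc_0+\kappa)f\|$ is false uniformly. So I will instead not estimate the product in pieces. I apply the $s=\frac12$ equivalence to $g=(\Lc_0+\kappa)^{1/2}f$ and handle the commutator $[\,(\Lc_0+\kappa)^{1/2}, qC_+\bar q\,]$. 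A cleaner approach is to use the identity
\[
\|(\Lc_q+\kappa)f\|^2=\|(\Lc_0+\kappa) f\|^2\mp 2\Re\langle(\Lc_0+\kappa)f,qC_+\bar q f\rangle+\|qC_+\bar qf\|^2 .
\]
I then bound $\|qC_+\bar q f\|\le \|q\|_{L^2}\|C_+(\bar qf)\|_{L^\infty}$ and estimate $\|C_+(\bar q f)\|_{L^\infty}^2\lesssim \|C_+(\bar q f)\|\,\|\partial C_+(\bar q f)\|$. The cross term is nonnegative in the focusing case after integrating by parts, via the Lax-type identity $\partial_x C_+(\bar q f)$. This is where uniformity over $B_M$, and not just bounded mass, is needed, and I would aim to absorb any bad terms using $M<2\pi$ as above.

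\textbf{Duality and interpolation.} The $s=-1$ case follows from $s=1$ by selfadjointness and duality. Since $\Lc_q+\kappa$ and $\Lc_0+\kappa$ are positive selfadjoint, the Heinz--Kato (L\"owner--Heinz) inequality then interpolates the bounds from $s\in\{-1,1\}$ to all $-1\le s\le1$, with the same constant $C_M$.
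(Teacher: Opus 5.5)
Your treatment of positivity and of $s=\pm\frac12$ is correct, and your worry about losing a constant is unfounded. The Fourier-side Cauchy--Schwarz bound $\|C_+(\bar q f)\|_{L^2}^2\le\frac{M(q)}{2\pi}\langle f,(\Lc_0+\kappa)f\rangle$ is exactly the sharp estimate \eqref{E:S bound}, which the paper instead derives from Carleman's inequality via $TT^*$. The L\"owner--Heinz step at the end is also a valid substitute for complex interpolation. The gap is the endpoint $s=\pm1$, which is where all the difficulty lies.

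Even the upper bound is not established as written. Your diagnosis is backwards: $\|f\|_{L^\infty}\lesssim\kappa^{-1/2}\|(\Lc_0+\kappa)f\|_{L^2}$ is true, whereas $\|qC_+(\bar qf)\|_{L^2}\lesssim M\|f\|_{L^\infty}$ is false. On $\T$, take $f=e^{iNx}$ and $\widehat q=cN^{-1/2}1_{[0,N]}$; then $qC_+(\bar qf)=e^{iNx}|q|^2$, whose norm is $\|q\|_{L^4}^2\sim c^2\sqrt N$. Your Agmon step does not close either, since $\|C_+(\bar qf)\|_{L^2}\|\partial_xC_+(\bar qf)\|_{L^2}$ is not bounded by $M\|(\Lc_0+\kappa)f\|_{L^2}^2$. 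The needed bound $\|qC_+\bar q(\Lc_0+\kappa)^{-1}\|_{\op}\lesssim M$ is true; the paper gets it from the Hilbert--Schmidt estimate \eqref{E:QHS} via a diagonal maximal function. Your reduction to $\|C_+(\bar qf)\|_{L^\infty}$ can also be closed by bounding $\|\widehat{C_+(\bar qf)}\|_{L^1}$ with Fubini and Hardy's inequality. But as proposed, the upper bound is unproven.

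The lower bound $\|(\Lc_0+\kappa)f\|\le C_M\|(\Lc_q+\kappa)f\|$ is the real missing piece. Your claim that the cross term has a sign is false: in the focusing case $-2\Re\langle(\Lc_0+\kappa)f,qC_+\bar qf\rangle$ contains $-2\kappa\|C_+(\bar qf)\|_{L^2}^2\le0$, which dominates for large $\kappa$. Nor can you absorb errors ``using $M<2\pi$ as above.'' The form bound controls $\langle f,qC_+\bar qf\rangle$, not $\|qC_+\bar qf\|$. Absorbing the latter needs $\|T(q,q)\|_{\op}<1$ uniformly, and the sharp bound on $S(q)$ does not give this because $T(q,q)=qC_+\bar q(\Lc_0+\kappa)^{-1}$ is not selfadjoint.

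Invertibility of $1\mp T(q,q)$ for each fixed $q$ is not the issue. Uniformity over the non-compact ball $B_M$ and over $\kappa\ge1$ is, and Lemma~\ref{L:necessary} shows it genuinely fails at $M=2\pi$. The paper supplies two ingredients you are missing:
\begin{enumerate}
\item The spectral identification $\sigma(T(q,q))=\sigma(S(q)S(q)^*)\subset[0,\frac{M}{2\pi}]$. This comes from $\sigma(AB)\cup\{0\}=\sigma(BA)\cup\{0\}$, applied with a frequency truncation of $q$ because multiplication by $q$ is unbounded. It gives $\dist(1,\sigma(\pm T))\ge1-\frac{M}{2\pi}$.
\item Bandtlow's resolvent bound for Hilbert--Schmidt operators (Proposition~\ref{P:Bandtlow}). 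It converts this spectral gap together with $\|T\|_{\I_2}\lesssim M$ into a bound on $\|(1\mp T)^{-1}\|_{\op}$ that is uniform over $B_M$ and $\kappa\ge1$.
\end{enumerate}
Without these, or some substitute, the case $s=\pm1$ remains unproven, and so does the interpolation to general $s$.
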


The fact that $\Lc_q$ defines a selfadjoint operator for $q\in L^2_+(\R)$ was addressed in \cite{gerard2024calogero}, which identified the form domain, and again in \cite{killip2023scaling}, which identified the operator domain. (Both papers consider the line case; however, the arguments are easily adapted to the circle.)  Furthermore, \cite[Prop.~2.2]{killip2023scaling} provides an analogue of \eqref{E:equiv} that works only on equicontinuous sets of $q$, rather than balls.  By demanding equicontinuity, \cite{killip2023scaling} does not need to impose a mass restriction in the focusing case.  By contrast, Lemma~\ref{L:necessary} below shows that the mass restriction is essential for achieving the uniformity described in Theorem~\ref{T:equiv}.  

The fact that the constant in \eqref{E:equiv} depends only on the mass (a conserved quantity) significantly simplifies our proof of the well-posedness of the continuum Calogero--Moser models. We believe this result will likewise prove useful in many future investigations.  


\begin{lemma}[Necessity of mass bound]\label{L:necessary}
In the focusing case, there are sequences $q_n, f_n \in H^\infty_+$ so that
\begin{align}
\int |q_n|^2 \,dx = M_\ast \qtq{and}
	\frac{ \|(\Lc_0+2)^{s}f_n \|_{L^2_+} }{ \|(\Lc_{q_n}+2)^{s}f_n \|_{L^2_+} }  +  \frac{ \|(\Lc_{q_n}+2)^{s}f_n \|_{L^2_+} }{ \|(\Lc_0+2)^{s}f_n \|_{L^2_+} }  
	\to \infty
\end{align}
for each $s\in\R\setminus\{0\}$.
\end{lemma}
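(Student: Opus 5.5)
The plan is to take for $q_n$ and $f_n$ the \emph{same} function: a rescaled soliton at exactly the critical mass. At critical mass the soliton is an eigenvector of its own Lax operator with a bounded eigenvalue, while its Fourier support runs off to high frequency under rescaling.

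\emph{Line case.} Let $R(x)=\sqrt2/(x+i)$; this lies in $H^\infty_+(\R)$ and $\int|R|^2\,dx=2\pi=M_*$. I will first verify that $\Lc_R R=0$ in the focusing case. The computation is
$$|R|^2=\tfrac{2}{1+x^2}=\tfrac{i}{x+i}-\tfrac{i}{x-i},$$
so $C_+(|R|^2)=\tfrac{i}{x+i}$. Hence $R\,C_+(\bar R R)=\tfrac{i\sqrt2}{(x+i)^2}$. On the other hand,
$$-iR'=-i\Bigl(-\tfrac{\sqrt2}{(x+i)^2}\Bigr)=\tfrac{i\sqrt2}{(x+i)^2},$$
so the two terms cancel.

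Next I will use the scaling \eqref{scaling}, $q_\lambda(x)=\lambda^{1/2}R(\lambda x)$. It preserves mass and satisfies $\Lc_{q_\lambda}=\lambda\,U_\lambda \Lc_R U_\lambda^{-1}$ for the unitary dilation $U_\lambda$, so $\Lc_{q_\lambda}q_\lambda=0$ as well. Set $q_n=f_n=q_{\lambda_n}$ with $\lambda_n\to\infty$. By the functional calculus, $(\Lc_{q_n}+2)^s f_n=2^s f_n$, so
$$\|(\Lc_{q_n}+2)^s f_n\|_{L^2_+}=2^s\|R\|_{L^2}$$
for every real $s$. Meanwhile $\widehat R(\xi)=c\,e^{-\xi}1_{\xi>0}$ with $c\neq 0$, so
$$\|(\Lc_0+2)^s f_n\|_{L^2_+}^2=2\pi|c|^2\int_0^\infty(\xi+2)^{2s}\lambda_n^{-1}e^{-2\xi/\lambda_n}\,d\xi,$$
which is comparable to $\lambda_n^{2s}$ as $n\to\infty$ by dominated convergence after the substitution $\xi=\lambda_n\eta$. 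This quantity tends to $\infty$ when $s>0$ and to $0$ when $s<0$. In either case one of the two ratios in the statement diverges.

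\emph{Torus case.} This is the step I expect to be the main obstacle: dilations are unavailable, so I need a periodic family of exact eigenvectors. I will try
$$q_r(x)=a_r\,\frac{1}{1-re^{ix}},\qquad 0<r<1,$$
with $a_r>0$ chosen so that $\int_\T|q_r|^2\,dx=2\pi$. First I will compute $C_+(|q_r|^2)$ by partial fractions in $z=e^{ix}$, using $|1-rz|^{-2}=(1-r^2)^{-1}\bigl[(1-rz)^{-1}+(1-r\bar z)^{-1}-1\bigr]$ on $|z|=1$. Then I will check that $\Lc_{q_r}q_r=\mu_r q_r$, where the eigenvalue $\mu_r$ is an explicit constant that stays bounded (and above $-2$) as $r\to1$. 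This mirrors the periodic solitons in Badreddine's work. If this exact family fails to be an eigenvector, the fallback is a quadratic-form argument. I would use $\langle f_n,(\Lc_{q_n}+2)f_n\rangle\lesssim\|f_n\|^2$ against $\langle f_n,(\Lc_0+2)f_n\rangle\to\infty$, which handles $s=\tfrac12$. I would then pass to all $s\neq0$ via the spectral theorem and Jensen/Hölder inequalities applied to the spectral measure of $f_n$.

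Once the eigenvector property holds, the conclusion is the same as on the line. We have $\|(\Lc_{q_r}+2)^sq_r\|=(\mu_r+2)^s\sqrt{2\pi}$, which stays bounded above and below. Meanwhile $\widehat{q_r}(k)=a_r r^k 1_{k\ge0}$ spreads to frequencies of order $(1-r)^{-1}$, so $\|(\Lc_0+2)^sq_r\|$ is comparable to $(1-r)^{-s}$ and tends to $\infty$ or $0$ according to the sign of $s$. Choosing $r_n\to1$ gives the sequences, and all $q_{r_n}$ are trigonometric-analytic and so lie in $H^\infty_+$.
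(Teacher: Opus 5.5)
This is the paper's proof: the same critical-mass soliton families (your $q_\lambda$ with $\lambda=n$ is $-i$ times the paper's $\sqrt{2n}/(1-inx)$, and your $q_r$ with $a_r=\sqrt{1-r^2}$, $r=n/(n+1)$, is exactly the paper's torus family), the fact that each is an eigenvector of its own Lax operator (your partial-fraction identity does close the torus computation, giving $C_+(|q_r|^2)=(1-re^{ix})^{-1}$ and $\Lc_{q_r}q_r=-q_r$ for every $r$, so no fallback is needed), and the explicit Fourier transform. One small correction: for $s\le-\tfrac12$ the squared norms $\|(\Lc_0+2)^sf_n\|^2$ do not behave like $\lambda_n^{2s}$ or $(1-r)^{-2s}$ but like $\lambda_n^{-1}$ and $1-r$ (with an extra logarithm at $s=-\tfrac12$), because $\eta^{2s}e^{-2\eta}$ is not integrable at $0$; the needed decay to $0$ for every $s<0$ nevertheless follows by dominated convergence with the bound $(\lambda_n\eta+2)^{2s}e^{-2\eta}\le 2^{2s}e^{-2\eta}$.
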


\begin{proof}
Through the study of travelling wave/soliton solutions to the Calogero--Moser models, the papers \cite{gerard2024calogero,Rana2} identified explicit families of $q$ for which $q$ itself is an eigenvector of the the Lax operator $\Lc_q$.   With a particular choice of parameters, we are lead to consider
$$
f_n(x)=q_n(x) = \frac{\sqrt{2n}}{{1-inx}} \qtq{and} f_n(x)=q_n(x) = \frac{\sqrt{2n+1}}{1-n(e^{ix}-1)}
$$
on the line and circle, respectively.  It is easy to verify that $\int |q_n|^2\,dx = 2\pi$ in both geometries.   Prior analyses show that
\begin{align}
 \Lc_{q_n} f_n = \lambda f_n
\end{align}
where $\lambda=0$ in the line setting and $\lambda=-1$ on the circle.  (This is also easily verified by direct computation.)
Evidently, for any $s\in\R$,
\begin{align}
	\|(\Lc_{q_n}+2)^{s}f_n \|_{L^2_+} \simeq_s 1 \quad\text{uniformly for $n\in \N$}.  
\end{align}

In order to finish the proof, it remains only to show that 
\begin{align*}
	\|(\Lc_0+2)^{s}f_n \|_{L^2_+} \to\infty \qtq{when $s>0$} \text{and}\quad \|(\Lc_0+2)^{s}f_n \|_{L^2_+} \to 0 \quad\text{when $s<0$.}
\end{align*}
This is easily done because we can compute the Fourier transform of $f_n$:
\begin{align*}
	\widehat f_n(\xi) &= \sqrt{2n+1} \, n^\xi (n+1)^{-\xi-1} 1_{\xi\geq 0}   &&\text{for all $\xi\in\Z$, on the circle, and} \\
	\widehat f_n(\xi) &= \sqrt{2/n} \, e^{-\xi/n} 1_{\xi\geq 0}   &&\text{for all $\xi\in\R$, on the line.} \qedhere
\end{align*}
\end{proof}

In this paper, the $2\pi$ threshold will be explained using an old inequality of Carleman \cite{Carleman}, which he used to prove the sharp isoperimetric inequality in the plane (and on minimal surfaces).  On a technical level, Carleman's argument is rather close to the proof of \cite[Lemma~A.1]{gerard2024calogero}; however, we think this alternate perspective is interesting for two reasons: (i) it provides new unity between the two geometries and (ii) it suggests that the continuum Calogero--Moser models may be intrinsically connected to an interesting geometric flow.

\begin{lemma}[Carleman, \cite{Carleman}]\label{L:Carleman}
For any $f \in L^1_+(\T)$, viewed as a holomorphic function in the unit disk, we have 
\begin{equation}\label{E:Carleman}
	\int_{-\pi}^\pi\int_0^1  |f(r e^{i\theta}) |^2\,r\,dr\,d\theta  \leq \tfrac{1}{4\pi} \biggl[\int_{-\pi}^\pi |f(e^{i\theta})|\,d\theta\biggr]^2.
\end{equation}
Analogously, any $f \in L^1_+(\R)$ satisfies
\begin{equation}\label{E:Carleman_R}
	\int_{-\infty}^\infty \int_0^\infty  |f(x+i y ) |^2\,dy\,dx  \leq \tfrac{1}{4\pi} \biggl[\int_{-\infty}^\infty |f(x)|\,dx\biggr]^2,
\end{equation}
when viewed as a holomorphic function in the upper half plane.
\end{lemma}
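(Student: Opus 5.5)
We will follow Carleman's classical argument: factor $f$ so that its modulus becomes the square of a Hardy-space function, then apply the Parseval-type bound for the area integral of a product. We treat the disk first.

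\emph{Reduction.} Since $f\in L^1_+(\T)$ is in $H^1$ of the disk, the inner--outer factorization writes $f = I\cdot F$, with $I$ inner ($|I|\le1$ inside, $|I|=1$ a.e.\ on the boundary) and $F$ outer. Set $g=h:=F^{1/2}$, the holomorphic square root of the zero-free outer function. Then $g\in H^2$, $|g(e^{i\theta})|^2=|f(e^{i\theta})|$ a.e., and $|f|\le |g|^2$ inside the disk. It therefore suffices to prove, for $g\in H^2(\mathbb D)$,
\begin{equation*}
\int_{-\pi}^\pi\int_0^1 |g(re^{i\theta})|^4\,r\,dr\,d\theta\le \tfrac1{4\pi}\Bigl[\int_{-\pi}^{\pi}|g(e^{i\theta})|^2\,d\theta\Bigr]^2 .
\end{equation*}

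\emph{Coefficient estimate.} Write $g=\sum_{n\ge0}a_n z^n$, so the right side equals $\frac{1}{4\pi}(2\pi\sum|a_n|^2)^2=\pi(\sum|a_n|^2)^2$. Then $g^2=\sum_m c_m z^m$ with $c_m=\sum_{j+k=m}a_ja_k$, and orthogonality of $z^m$ in $L^2(\mathbb D)$ gives the left side as $\pi\sum_m |c_m|^2/(m+1)$. Cauchy--Schwarz over the $m+1$ terms yields $|c_m|^2\le (m+1)\sum_{j+k=m}|a_j|^2|a_k|^2$, hence
\begin{equation*}
\pi\sum_m \frac{|c_m|^2}{m+1}\le \pi\sum_{j,k}|a_j|^2|a_k|^2=\pi\Bigl(\sum|a_n|^2\Bigr)^2 .
\end{equation*}
This proves \eqref{E:Carleman}. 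The only delicate point is the factorization step; if one wishes to avoid it, one can first prove the estimate for $f$ holomorphic on a larger disk, using $f(rz)$ and monotone convergence as $r\to1$.

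\emph{Half-plane.} Here we repeat the argument on the Fourier side. Factor $f=I\,g^2$ with $g\in H^2(\C_+)$ and $\|g\|_{L^2}^2=\|f\|_{L^1}$. With the paper's normalization, $g(x+iy)=\int_0^\infty \widehat g(\xi)e^{i\xi x-\xi y}d\xi$, so $g^2(x+iy)=\int_0^\infty \widehat{g^2}(\eta)e^{i\eta x-\eta y}\,d\eta$, where $\widehat{g^2}=\widehat g*\widehat g$. Plancherel in $x$ followed by integration in $y$ gives
\begin{equation*}
\iint|g|^4=2\pi\int_0^\infty \frac{|\widehat{g^2}(\eta)|^2}{2\eta}\,d\eta .
\end{equation*}
Cauchy--Schwarz on the convolution over an interval of length $\eta$ gives $|\widehat{g^2}(\eta)|^2\le \eta\int_0^\eta |\widehat g(\zeta)|^2|\widehat g(\eta-\zeta)|^2\,d\zeta$. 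Integrating in $\eta$ then produces the bound
\begin{equation*}
\pi\Bigl(\int|\widehat g|^2\Bigr)^2=\pi\bigl(\|g\|_{L^2}^2/2\pi\bigr)^2=\tfrac1{4\pi}\|f\|_{L^1}^2 .
\end{equation*}
This proves \eqref{E:Carleman_R}.

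Alternatively, one could deduce \eqref{E:Carleman_R} from \eqref{E:Carleman} via the Cayley transform, since both sides transform compatibly (area with Jacobian $|\phi'|^2$, arclength with $|\phi'|$, and $f\mapsto (f\circ\phi)\phi'$). The direct computation above, however, avoids this bookkeeping. We expect the main technical obstacle in either geometry to be justifying the factorization and square root for a general $L^1_+$ function. If this proves troublesome, the fallback is to prove the inequality for a dense class, such as functions continuous up to the boundary, and pass to the limit using Fatou's lemma.
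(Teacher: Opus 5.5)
Your proof is correct and follows essentially the same route as the paper. Both reduce to outer $f$ via inner--outer factorization, write $f=g^2$ with $g\in L^2_+$, compute the area integral on the Fourier (resp.\ Taylor) side with weight $1/(2\xi)$ (resp.\ $1/(m+1)$), and close with Cauchy--Schwarz on the convolution; the only difference is that the paper writes out just the half-plane case, while you also carry out Carleman's original disk computation in Taylor coefficients.
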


\begin{proof}
Carleman only considers \eqref{E:Carleman}.  We will illustrate his argument by using it to prove \eqref{E:Carleman_R}.
Given $f\in L^1_+(\R)$, we may perform an inner/outer factorization (see~\cite{MR2261424}).  The presence of an inner factor does not affect RHS\eqref{E:Carleman_R} and only serves to diminish LHS\eqref{E:Carleman_R}. Thus, we need only treat outer functions $f$, which may be written $f=g^2$ for some outer function $g\in L^2_+$.

By changing from $f\in L^1_+$ to $g\in L^2_+$, Carleman has made the problem amenable to Fourier analysis.  Observe first that the analytic extension of $f$ to the upper half-plane is given by
\begin{equation*}
f(x+iy) = \int_0^\infty \! e^{i\xi x - \xi y} \widehat f(\xi) \, d\xi \quad\text{for all $x\in\R$ and $y>0$.} 
\end{equation*}
Correspondingly, using \eqref{Plancherel}, we find
\begin{align}
	\text{LHS\eqref{E:Carleman_R}} &= 2\pi \int_0^\infty \!\! \int_0^\infty \bigl| \widehat f (\xi)|^2 e^{-2\xi y}\,d\xi\,dy =
		2\pi \int_0^\infty \bigl| \widehat f (\xi)|^2\tfrac{\,d\xi}{2\xi},
		\label{CarlFub}
\end{align}
into which we substitute $\widehat f = \widehat g * \widehat g$; see \eqref{Convolution}.  Using \eqref{Plancherel} also on RHS\eqref{E:Carleman_R}, we have reduced the claim \eqref{E:Carleman_R} to
\begin{equation}
	\int_0^\infty  \biggl| \int_0^\xi \widehat g(\xi-\eta) \widehat g(\eta) \, d\eta \biggr|^2\,\frac{\pi\,d\xi}{\xi}
		\leq \pi \biggl[\int_0^\infty |\widehat g(\xi)|^2\,d\xi\biggr]^2 .
\end{equation}
This follows by applying the Cauchy--Schwarz inequality in the $\eta$ integral. 
\end{proof}

To prove the isoperimetric inequality for a simply connected domain $\Omega$, Carleman applies \eqref{E:Carleman} to the derivative of the Riemann mapping from the unit disk onto $\Omega$.  The inequality \eqref{E:Carleman_R} performs the same role if one uses the upper half-plane as the model domain instead of the disk.  This is another reason for regarding \eqref{E:Carleman_R} as the proper analogue in this geometry, beyond the similarity of the proof.

The proof of Theorem~\ref{T:equiv} rests on treating $\Lc_q$ as a perturbation of $\Lc_0$. To verify that $\Lc_q+\kappa$ is positive definite for $\kappa\geq 1$, it is convenient to employ the factorization
\begin{equation}\label{E:middle}
(\Lc_q+\ka)=(\Lc_0+\ka)^{\frac12}\bigl[1\mp S(q)^* S(q) \bigr] (\Lc_0+\ka)^{\frac12}
\qtq{with} S(q) := C_+ \overline q (\Lc_0+\ka)^{-\frac12}
\end{equation}
and $\kappa\geq 1$.  We will also employ the factorizations
\begin{equation}\label{E:rightleft}
(\Lc_q+\ka)=\bigl[1\mp T(q,q)\bigr] (\Lc_0+\ka) 
\qtq{and}
(\Lc_q+\ka)=(\Lc_0+\ka) \bigl[1\mp T(q,q)^*\bigr], 
\end{equation} 
where we have introduced two more operators on $L^2_+$,
\begin{equation}\label{E:T&T*}
T(q,g)=qC_+\bar{g}(\Lc_0+\ka)^{-1} \qtq{and} T(q,g)^*=(\Lc_0+\ka)^{-1} gC_+\bar{q}.
\end{equation} 
The first factorization in \eqref{E:rightleft} will be used to prove \eqref{E:equiv} when $s=1$, while the second will be used when $s=-1$.  The remaining cases will follow by interpolation.

Our first application of the Carleman inequality is to bound the norm of the operator $S(q)$ defined in \eqref{E:middle}.  We also include here a second inequality \eqref{E:op bound} that will be crucial for proving nondegeneracy of the symplectic form \eqref{eqn:omega}.

\begin{corollary}
For $q\in L^2_+$ and $\kappa\geq 1$,
\begin{equation}\label{E:S bound}
\| S(q)\|_{L^2_+\to L^2_+}^2 = \| S(q)^* \|_{L^2_+\to L^2_+}^2 \leq \tfrac{1}{2\pi} \|q\|_{L^2_+}^2.
%
\end{equation}
Moreover, recalling the operator $\tilde\partial^{-1}$ from \eqref{delta inverse} and \eqref{delta inverse'}, we have
\begin{equation}\label{E:op bound}
\langle g,C_+ \overline q (-i\tilde{\dd})^{-1} qg\rangle \leq \tfrac{1}{2\pi} \|q\|_{L^2_+}^2 \|g\|_{L^2_+}^2 \quad\text{for all $g\in (L^2_-)^\perp$}.
\end{equation}
\end{corollary}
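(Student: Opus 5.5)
We deduce both bounds directly from Lemma~\ref{L:Carleman}, applied to the product of $q$ with a test function. The product of two $L^2_+$ functions lies in $L^1_+$, and its $L^1$ norm is bounded by $\|q\|_{L^2}\|h\|_{L^2}$ via Cauchy--Schwarz. For \eqref{E:S bound}, the equality of the two norms is the general identity $\|S\|=\|S^*\|$. So it suffices to bound $S(q)^*h=(\Lc_0+\ka)^{-1/2}C_+(qh)=(\Lc_0+\ka)^{-1/2}(qh)$ for $h\in L^2_+$. Setting $f=qh\in L^1_+$, we have
\begin{equation*}
\|S(q)^*h\|_{L^2}^2 = 2\pi\int_0^\infty \frac{|\widehat f(\xi)|^2}{\xi+\ka}\,d\xi
\end{equation*}
on the line, with the analogous sum over $\xi\in\Z_{\geq0}$ on the circle. (A density argument, e.g.\ taking $h$ and $q$ Schwartz or trigonometric polynomials first, takes care of finiteness issues.)

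We now compare this with the left-hand side of Carleman's inequality, computed as in \eqref{CarlFub}. On the line, LHS\eqref{E:Carleman_R}$=\pi\int_0^\infty|\widehat f(\xi)|^2\,\frac{d\xi}{\xi}$. Since $\frac1{\xi+\ka}\leq\frac1\xi$, this gives $\|S(q)^*h\|^2\leq 2\,$LHS\eqref{E:Carleman_R}$\,\leq\frac1{2\pi}\|f\|_{L^1}^2$. On the circle, expanding $f(re^{i\theta})=\sum_{n\geq0}\widehat f(n)r^ne^{in\theta}$ and integrating in $r$ gives LHS\eqref{E:Carleman}$=\pi\sum_{n\geq0}\frac{|\widehat f(n)|^2}{n+1}$. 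Because $\ka\geq1$, we have $\frac1{n+\ka}\leq\frac1{n+1}$ for all $n\geq0$ (this is exactly where $\ka\geq1$ is used), and the same conclusion follows. Combining with $\|f\|_{L^1}\leq\|q\|_{L^2}\|h\|_{L^2}$ yields \eqref{E:S bound}.

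For \eqref{E:op bound}, we move $C_+\overline q$ to the other side and write the quantity as $\langle qg,(-i\tilde\partial)^{-1}(qg)\rangle$. By \eqref{delta inverse'}, $(-i\tilde\partial)^{-1}$ has Fourier multiplier $\frac1\xi$ for $\xi\neq0$, and $0$ at $\xi=0$ on the torus. The form is therefore real and equals $2\pi\int_0^\infty|\widehat f(\xi)|^2\frac{d\xi}{\xi}$ with $f=qg$. On the line, this is exactly $2\,$LHS\eqref{E:Carleman_R}, and we conclude as before.

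The torus is the one point needing care, since Carleman only gives control of the weights $\frac1{n+1}$, not $\frac1n$. Here the hypothesis $g\in(L^2_-)^\perp$ means $\widehat g(0)=0$, so $\widehat f(0)=\widehat q(0)\widehat g(0)=0$. Hence we may write $f=e^{ix}F$ with $F\in L^1_+$ and $|F|=|f|$ on $\T$. Then
\begin{equation*}
\sum_{n\geq1}\frac{|\widehat f(n)|^2}{n}=\sum_{m\geq0}\frac{|\widehat F(m)|^2}{m+1},
\end{equation*}
and Carleman applied to $F$ gives $2\pi\sum_{m\ge0}\frac{|\widehat F(m)|^2}{m+1}\leq\frac1{2\pi}\|F\|_{L^1}^2=\frac1{2\pi}\|f\|_{L^1}^2\leq\frac1{2\pi}\|q\|^2\|g\|^2$. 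I expect this index shift to be the main (though small) obstacle; the rest is bookkeeping of the $2\pi$ normalizations in \eqref{Plancherel}.
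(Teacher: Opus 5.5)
Your proposal is correct and is essentially the paper's proof. The paper likewise recasts Lemma~\ref{L:Carleman} in Fourier form as $\langle f,(2\Lc_0+2)^{-1}f\rangle\le\frac1{4\pi}\|f\|_{L^1}^2$ on the circle and $\langle f,(2\Lc_0)^{-1}f\rangle\le\frac1{4\pi}\|f\|_{L^1}^2$ on the line, applies it with $f=qg$ and H\"older (your direct computation of $\|S(q)^*h\|^2$ is exactly the $TT^*$ reduction the paper invokes), and handles the torus case of \eqref{E:op bound} with the same shift $f=e^{-ix}qg$; you simply make explicit the disk computation $\pi\sum_{n\ge0}|\widehat f(n)|^2/(n+1)$ and where $\kappa\ge1$ enters, which the paper leaves implicit.
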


\begin{proof}
By the method of $TT^*$, we see that \eqref{E:S bound} is equivalent to
\begin{equation}\label{E:S bound'}
\langle g,  C_+ \overline q (\Lc_0+\ka)^{-1} qg\rangle \leq \tfrac{1}{2\pi} \|q\|_{L^2_+}^2 \|g\|_{L^2_+}^2,
\end{equation}
which reveals its similarity to \eqref{E:op bound}.

To understand the significance of Carleman's inequalities in this context, it is convenient to write out their left-hand sides in terms of $\widehat f$.  In the line case, this calculation was performed in \eqref{CarlFub}.  Performing the parallel analysis on the circle, one finds that Lemma~\ref{L:Carleman} is equivalent to the statements 
\begin{align}\label{E:Carleman'}
	\langle f, (2\Lc_0+2)^{-1} f \rangle \leq \tfrac{1}{4\pi} \bigl\| f\bigr\|_{L^1_+(\T)}^2 \qtq{and}
	\langle f, (2\Lc_0)^{-1} f \rangle \leq \tfrac{1}{4\pi} \bigl\| f\bigr\|_{L^1_+(\R)}^2.
\end{align}
Choosing $f=gq$ and applying H\"older's inequality then settles all claims except the torus case of \eqref{E:op bound}.  For this last case, we choose $f(x) = e^{-ix}q(x)g(x)$, which belongs to $L^1_+(\T)$ because $g\in (L^2_-)^\perp$.
\end{proof}

Continuing with our goal of proving Theorem~\ref{T:equiv} using the factorizations \eqref{E:middle} and \eqref{E:rightleft}, we need to show invertibility of each of the operators in square brackets.  To do this, we will use Fredholm theory and so must show that the operators are compact.

\begin{lemma}\label{L:SST}
For $q\in L^2_+$ and $\kappa\geq 1$, the operators $T(q,q)$, $T(q,q)^*$, $S(q)^*S(q)$, and $S(q)S(q)^*$ are all Hilbert--Schmidt and have identical spectra.
\end{lemma}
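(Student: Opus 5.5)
The argument has two parts: first the Hilbert--Schmidt property, by writing explicit kernels, and then the equality of spectra, by algebraic identities that relate the four operators through intertwining factors.

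\emph{Hilbert--Schmidt.} We start with $S(q)^*=(\Lc_0+\ka)^{-\frac12}q$, viewed as an operator $L^2_+\to L^2_+$. In Fourier variables its kernel is
$$
(\xi,\eta)\mapsto (\xi+\ka)^{-1/2}\,1_{\xi\ge0}\,\widehat q(\xi-\eta)\,1_{\eta\ge0},
$$
up to the constants from \eqref{Plancherel}. Its Hilbert--Schmidt norm squared is therefore a constant multiple of
$$
\int_0^\infty\!\!\int_0^\infty \frac{|\widehat q(\xi-\eta)|^2}{\xi+\ka}\,d\eta\,d\xi .
$$
Since $\widehat q$ vanishes on negative frequencies, only $\xi\ge\eta$ contributes. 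Substituting $\zeta=\xi-\eta$, the $\eta$-integral of $(\zeta+\eta+\ka)^{-1}$ over $[0,\infty)$ diverges logarithmically, so this crude count fails. We need to be more careful about which operator is claimed to be Hilbert--Schmidt: the claim concerns $S^*S$ and $T$, which carry a full power of $(\Lc_0+\ka)^{-1}$. So we compute instead for $T(q,q)^*=(\Lc_0+\ka)^{-1}qC_+\bar q$, or equivalently for $(\Lc_0+\ka)^{-1}q$ restricted appropriately. Now
$$
\int_0^\infty\!\!\int_0^\infty \frac{|\widehat q(\xi-\eta)|^2}{(\xi+\ka)^2}\,d\eta\,d\xi
=\int_0^\infty |\widehat q(\zeta)|^2\int_0^\infty\frac{d\eta}{(\zeta+\eta+\ka)^2}\,d\zeta
\le \ka^{-1}\|\widehat q\|^2 ,
$$
and on the torus the sums behave the same way. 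Hence $(\Lc_0+\ka)^{-1}q\in\I_2$ with norm $\lesssim\ka^{-1/2}\|q\|_{L^2}$. Composing with the bounded operator $C_+\bar q(\Lc_0+\ka)^{-1/2}\cdot(\Lc_0+\ka)^{1/2}$ is not directly allowed, so we argue differently: $T(q,q)^*=A\,B$ with $A=(\Lc_0+\ka)^{-1/2}q$ and $B=C_+\bar q$. Instead we write
$$
T(q,q)^*=(\Lc_0+\ka)^{-1/2}\,\bigl[S(q)^*\bigr]^{\!*\,\text{-type}}\cdots
$$
and it is cleaner to take the explicit kernel of $T(q,q)^*$ itself. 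That kernel is
$$
(\xi+\ka)^{-1}\int \widehat q(\xi-\zeta)\,\overline{\widehat q(\eta-\zeta)}\,1_{\zeta\ge0}\,d\zeta ,
$$
which we bound by Cauchy--Schwarz in $\zeta$. This reduces the Hilbert--Schmidt norm to the computation just displayed times $\|q\|^2$, giving $\|T(q,q)^*\|_{\I_2}\lesssim \ka^{-1/2}\|q\|_{L^2}^2$. Then $T(q,q)$, being the adjoint of $T(q,q)^*$, is also Hilbert--Schmidt with the same norm. For the other two we use $S^*S=(\Lc_0+\ka)^{-1/2}\,qC_+\bar q\,(\Lc_0+\ka)^{-1/2}$, whose kernel is the same as above but with weight $(\xi+\ka)^{-1/2}(\eta+\ka)^{-1/2}$. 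Applying Schur/Cauchy--Schwarz with the weights split symmetrically gives the same bound, and $SS^*=C_+\bar q(\Lc_0+\ka)^{-1}q$ is handled identically. This kernel bookkeeping, and getting the weights to give a convergent integral rather than the logarithmic divergence, is the main obstacle.

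\emph{Equal spectra.} We use the identities
$$
T(q,q)=(\Lc_0+\ka)^{1/2}S^*S(\Lc_0+\ka)^{-1/2}
\qtq{and}
T(q,q)^*=(\Lc_0+\ka)^{-1/2}S^*S(\Lc_0+\ka)^{1/2},
$$
which hold on dense domains. Writing $X=(\Lc_0+\ka)^{1/2}S^*$ (a Hilbert--Schmidt operator by the above) and $Y=S(\Lc_0+\ka)^{-1/2}$ (bounded), we obtain $T=XY$ and $S^*S=YX$, modulo checking that $YX$ is exactly $S^*S$. The nonzero spectra of $XY$ and $YX$ agree, as do the spectra of $S^*S$ and $SS^*$ and of $T$ and $T^*$; the latter holds because $S^*S$ is self-adjoint, so its spectrum is real and conjugation-invariant. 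All four operators are compact on the infinite-dimensional space $L^2_+$, so $0$ belongs to each spectrum, and hence the full spectra coincide.
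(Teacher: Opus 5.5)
\textbf{Both steps have a genuine gap.}

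\emph{The Hilbert--Schmidt step.} Cauchy--Schwarz in $\zeta$ does not close. The $\zeta$-integral runs over $[0,\xi\wedge\eta]$. If you bound one factor by $\|\widehat q\|^2$ and integrate the other in $\eta$, you are left with $\|\widehat q\|^4\int_0^\infty \xi(\xi+\ka)^{-2}\,d\xi=\infty$. The other grouping also diverges, and even the unsimplified Cauchy--Schwarz bound diverges for suitable $q\in L^2_+$.

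Your reduction to the computation for $(\Lc_0+\ka)^{-1}q$ would need $C_+\bar q$ to be bounded on $L^2_+$. That fails for general $q\in L^2_+$.

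In fact the bound you arrive at, $\|T(q,q)^*\|_{\I_2}\lesssim\ka^{-1/2}\|q\|_{L^2}^2$, is false. On $\R$, conjugate by the unitary dilation $f\mapsto\lambda^{1/2}f(\lambda\,\cdot)$. This shows that $T(q_\lambda,q_\lambda)$ at parameter $\ka$, where $q_\lambda=\lambda^{1/2}q(\lambda\,\cdot)$, is unitarily equivalent to $T(q,q)$ at parameter $\ka/\lambda$. Any bound decaying in $\ka$ uniformly on $L^2$-balls would therefore force $T\equiv0$.

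The missing idea is to read the $\zeta$-integral as an average. Since $\ka\geq1$ gives $(\xi+\ka)^{-1}\le(1+\xi\wedge\eta)^{-1}$, the kernel is dominated by the average of $\widehat q\otimes\overline{\widehat g}$ along a diagonal segment of length $\xi\wedge\eta$ ending at $(\xi,\eta)$. It is therefore dominated by the diagonal maximal function of $\widehat q\otimes\overline{\widehat g}$, which is bounded on $L^2(\R^2)$ and on $\ell^2(\Z^2)$. This yields the scale-invariant bilinear bound $\|T(q,g)\|_{\I_2}\lesssim\|q\|_{L^2}\|g\|_{L^2}$. The same argument treats $S(q)^*S(q)$, using $(\xi+\ka)^{-1/2}(\eta+\ka)^{-1/2}\le(\xi\wedge\eta+\ka)^{-1}$. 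Finally, $\|S(q)S(q)^*\|_{\I_2}=\|S(q)^*S(q)\|_{\I_2}$ by cycling the trace.

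\emph{The spectral step.} The operator $X=(\Lc_0+\ka)^{1/2}S(q)^*$ is simply multiplication by $q$. It is unbounded on $L^2_+$ unless $q\in L^\infty$, so it is certainly not Hilbert--Schmidt. Hence the identity $\sigma(XY)\cup\{0\}=\sigma(YX)\cup\{0\}$, which is valid for bounded operators, cannot be applied as stated. (Also, $YX=C_+\bar q(\Lc_0+\ka)^{-1}q=S(q)S(q)^*$, not $S(q)^*S(q)$; this slip is harmless.)

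Your factorization is the right one, and it is the one the paper uses, but it needs an approximation argument:
\begin{itemize}
\item Replace $q$ by a frequency truncation $q_{\le N}\in L^\infty_+$. Both factors are then bounded, so $\sigma(T(q_{\le N},q_{\le N}))=\sigma(S(q_{\le N})S(q_{\le N})^*)$.
\item Let $N\to\infty$. The bilinear Hilbert--Schmidt bound gives convergence of both operators in $\I_2$.
\item $\I_2$-convergence gives locally uniform convergence of the regularized determinants $\lambda\mapsto\det_2(1-\lambda^{-1}A)$. Their zeros are the nonzero spectrum, so the spectra converge.
\end{itemize}
This is exactly where the quantitative, bilinear form of the Hilbert--Schmidt estimate is indispensable; qualitative membership in $\I_2$ would not suffice.
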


\begin{proof}
We begin by proving quantitative bounds on the Hilbert--Schmidt norm:
\begin{equation}\begin{aligned}\label{E:QHS}
&\|T(q,g)\|_{\I _2}= \|T(q,g)^*\|_{\I _2} \lesssim \|q\|_{L^2_+}\|g\|_{L^2_+} \\ 
&\|S(q)^*S(q)\|_{\I _2} = \|S(q)S(q)^*\|_{\I _2} \lesssim \|q\|_{L^2_+}^2 .
\end{aligned}\end{equation}
The second estimate was demonstrated previously in \cite{FrankRead}; however, their method does not seem to apply to $T$ or $T^*$.

We will exploit the $L^2$-boundedness of certain `diagonal' maximal operators in $\Z^2$ and $\R^2$, specifically,
\begin{align*}
[\mathcal{M} f](n,m)=\sup_{h\in\N}\ \tfrac{1}{2h+1}\sum_{\ell=-h}^h|f(n-\ell,m-\ell)| &\qtq{obeys} \|\mathcal{M}f\|_{\ell^2(\Z^2)}\lesssim \|f\|_{\ell^2(\Z^2)},\\
[\mathcal{M} f](x,y)=\sup_{h>0}\ \tfrac{1}{2h}\int_{-h}^h|f(x-\ell,y-\ell)|\,d\ell &\qtq{obeys} \|\mathcal{M}f\|_{L^2(\R^2)}\lesssim \|f\|_{L^2(\R^2)}.
\end{align*}
These bounds can be proved, for example, by combining boundedness of the one-dimensional maximal operator with a coordinate rotation.  In the discrete case, it is helpful to regard $\Z^2$ as an infinite chessboard and notice that $\mathcal{M}$ operates on the black and white sublattices independently.

To avoid repetition, we will present the key argument only on the torus.  Employing a Fourier basis, we have
\begin{align*}
\bigl\|T(q,g)\bigr\|_{\I _2}^2&=  \sum_{n,m \geq 0}\,\biggl|\mkern 2mu \sum_{\ell=0}^{n\wedge m} \widehat{q\mkern0.5mu} (n-\ell) 
	\mkern 2mu\overline{\widehat g(m-\ell)} \mkern 2mu (m+\ka)^{-1} \biggr|^2 \\
&\leq \sum_{n,m \geq 0}\,\biggl| \f{1}{1+n\wedge m}\sum_{\ell=0}^{n\wedge m} \widehat{q}(n-\ell)\overline{\widehat{g}(m-\ell)}\biggr|^2 \\
&\leq \sum_{n,m\geq 0} \, \Bigl| [\mathcal{M} \,\widehat{q\mkern0.5mu} \otimes \overline{\widehat g}\mkern2mu] (n,m)\Bigr|^2\\
&\lesssim  \| \widehat{q\mkern0.5mu} \otimes \overline{\widehat g}\mkern1mu \|_{\ell^2(\Z^2)}^2 \lesssim  \|q\|_{L^2_+}^2\|g\|_{L^2_+}^2 .
\end{align*}

A parallel argument can be used to bound
\begin{align}\label{1:52}
\|S(q)^*S(q)\|_{\I _2}\lesssim \|q\|_{L^2_+}^2 \qtq{or equivalently} \|S(q)\|_{\I _4}=\|S(q)^*\|_{\I _4}\lesssim \|q\|_{L^2_+}.
\end{align}
Here $\I_4$ denotes the corresponding trace ideal; see \cite{simon_book}.

The fact that $T$ and $T^*$ have the same Hilbert--Schmidt norm can be seen, for example, by cycling the trace: $\Tr(TT^*)=\Tr(T^*T)$.  By the same reasoning, $S^*S$ and $SS^*$ have the same Hilbert--Schmidt norm.

It remains to show that all operators have the same spectrum.  Our key tool is the relation
\begin{equation}\label{ABBA}
\sigma(AB)\cup\{0\}  = \sigma(BA)\cup\{0\} \quad\text{for any pair of bounded operators $A,B$.}
\end{equation}
For a proof of this, as well as a historical discussion, see \cite{Deift}.  In our applications, $AB$ and $BA$ will both be compact; correspondingly, zero is automatically in the spectrum and $\sigma(AB) = \sigma(BA)$.

The fact that $\sigma( S(q)^*S(q) )=\sigma( S(q)S(q)^* )$ is an easy application of \eqref{ABBA}.  In order to see that $\sigma( T(q,q) ) = \sigma( S(q)S(q)^* )$, we would like to apply \eqref{ABBA} with $A:f\mapsto C_+ \widebar q (\Lc_0 + \kappa)^{-1}f$ and $B:f\mapsto qf$; however, $B$ is an unbounded operator for general $q\in L^2_+$.  To address this, we first consider $q_{\leq N}$, which belongs to $L^\infty_+$, and then send $N\to\infty$.  Notice that \eqref{E:QHS} guarantees convergence in Hilbert--Schmidt norm, which guarantees convergence of the (Hilbert-regularized) Fredholm determinant and so of the spectrum.

Using the fact that $S(q)S(q)^*$ is selfadjoint, we have
$$
\sigma( T(q,q)^* )=\overline{\sigma( T(q,q) )} = \overline{\sigma( S(q)S(q)^* )} = \sigma( S(q)S(q)^* ).
$$
Here, the overline indicates (pointwise) complex conjugation on the set.
\end{proof}

The operators $T(q,q)$ and $T(q,q)^*$ are not selfadjoint and so one cannot expect to control the norm of their resolvents solely in terms of the location of their spectra.  
Fortunately, our operators are Hilbert--Schmidt, which limits the intensity of this pseudospectral phenomenon.  Concretely, we will rely on the elegant quantitative treatment by Bandtlow \cite{Bandtlow}.  Combining Theorem 4.1 and Proposition 3.8 from \cite{Bandtlow} yields the following:

\begin{proposition}[\cite{Bandtlow}]\label{P:Bandtlow}
There is a constant $a>0$ so that  
$$
\bigl\|(I-A)^{-1}\bigr\|_\op \lesssim \f{1}{\dist(1,\sg(A))}\exp\Bigl(\f{a}{\dist(1,\sigma(A))^2} \bigl\|A\bigr\|_{\I_2}^2 \Bigr)
$$
for any Hilbert--Schmidt operator $A$.
\end{proposition}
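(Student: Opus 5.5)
We deduce the bound from two standard ingredients of the theory of Hilbert--Schmidt operators: the regularized Fredholm determinant $\det_2$ and Carleman's estimate for it. Let $A\in\I_2$ and suppose $1\notin\sigma(A)$. Write
\begin{equation*}
(I-A)^{-1} = \frac{\det_2(I-A)\,(I-A)^{-1}}{\det_2(I-A)} .
\end{equation*}
The plan is to bound the numerator by Carleman's inequality. The denominator we handle as in Bandtlow's analysis, which treats both together: factor through $\sigma(A)$ and use the ``departure from normality'' estimate. Concretely, Schur-decompose $A=D+N$, where $D$ is normal with $\sigma(D)=\sigma(A)$ (the eigenvalues, counted with multiplicity), $N$ is quasinilpotent Hilbert--Schmidt, and
\begin{equation*}
\|N\|_{\I_2}^2=\|A\|_{\I_2}^2-\sum|\lambda_j|^2\le \|A\|_{\I_2}^2 .
\end{equation*}
This is Proposition~3.8 in \cite{Bandtlow}.

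Next we resolve. Set $d=\dist(1,\sigma(A))$. Since $D$ is normal, $\|(I-D)^{-1}\|=1/d$. Moreover, $(I-D)^{-1}N$ is quasinilpotent; in the triangular basis it is strictly upper triangular. Hence
\begin{equation*}
(I-A)^{-1}=\bigl(I-(I-D)^{-1}N\bigr)^{-1}(I-D)^{-1},
\end{equation*}
and $\|(I-D)^{-1}N\|_{\I_2}\le \|N\|_{\I_2}/d$.

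The core estimate is that, for a quasinilpotent Hilbert--Schmidt operator $K$,
\begin{equation*}
\|(I-K)^{-1}\|\le \exp\bigl(a\|K\|_{\I_2}^2\bigr).
\end{equation*}
To prove it, use Carleman's inequality
\begin{equation*}
\bigl\|\det_2(I-K)(I-K)^{-1}\bigr\|\le \exp\bigl(\tfrac12(\|K\|_{\I_2}^2+1)\bigr),
\end{equation*}
together with $\det_2(I-K)=1$, which holds because the eigenvalues of $K$ all vanish. This yields the claim with $a=\tfrac12$, up to the harmless factor $e^{1/2}$, which we absorb into the implicit constant. This is essentially Theorem~4.1 of \cite{Bandtlow}.

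Combining these steps gives
\begin{equation*}
\|(I-A)^{-1}\|\le \frac{1}{d}\exp\Bigl(\frac{a}{d^2}\|A\|_{\I_2}^2\Bigr)
\end{equation*}
up to that constant, as stated. The main obstacle is the Schur-type decomposition in infinite dimensions, which needs Ringrose's theory of maximal nests of invariant subspaces to make ``upper triangular'' rigorous. Since the statement is quoted from \cite{Bandtlow}, we would simply cite it rather than reprove that step.
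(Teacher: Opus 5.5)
Your argument is correct and follows the route behind the cited result. The paper gives no proof beyond invoking Bandtlow's Theorem~4.1 and Proposition~3.8. Their content is what you reconstruct: the Schur splitting $A=D+N$ with $\|N\|_{\I_2}\le\|A\|_{\I_2}$, the factorization $(I-A)^{-1}=\bigl(I-(I-D)^{-1}N\bigr)^{-1}(I-D)^{-1}$, and Carleman's bound applied to the Volterra factor, for which $\det_2=1$. As you note, the quasinilpotency of $(I-D)^{-1}N$ rests on Ringrose's theorem, and the opening ratio $\det_2(I-A)(I-A)^{-1}/\det_2(I-A)$ is never used and can be dropped.
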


\begin{proof}[Proof of Theorem~\ref{T:equiv}]
From \eqref{E:S bound} we see that
\begin{equation}\label{<S<}
1-\tfrac{1}{2\pi}M(q) \leq 1 - S(q)^* S(q) \leq 1 \leq 1 + S(q)^* S(q) \leq 1+\tfrac{1}{2\pi}M(q)
\end{equation}
in the sense of quadratic forms.  Combining this with \eqref{E:middle} we see that $\Lc_q+\kappa$ is both positive definite and boundedly invertible when $q\in B_{M_*}$ and $\kappa\geq 1$.

The combination of \eqref{<S<} and \eqref{E:middle} suffices to prove \eqref{E:equiv} when $s=\pm\frac12$.  To treat $s=\pm1$, we must consider $T(q,q)$ and $T(q,q)^*$.  Our key claim about these operators is the following: If $M<M_*$, then there is a constant $C_M$ so that 
\begin{equation}\label{373}\begin{gathered}
\bigl\| 1 \mp T^*\bigr\|_\op  =  \bigl\| 1 \mp T \bigr\|_\op \leq C_M \\
\bigl\| [1 \mp T^*]^{-1} \bigr\|_\op  =  \bigl\| [1 \mp T]^{-1} \bigr\|_\op \leq C_M.
\end{gathered}\end{equation}
The first claim follows immediately from \eqref{E:QHS}.  Controlling the inverses is more subtle and relies on Proposition~\ref{P:Bandtlow} with the required control on the spectrum provided by Lemma~\ref{L:SST} and \eqref{<S<}.

By rewriting \eqref{E:rightleft} as
\begin{alignat*}{2}
(\Lc_q+\ka)&=\bigl[1\mp T(q,q)\bigr] (\Lc_0+\ka),
	\quad& (\Lc_q+\ka)^{-1}&=\bigl[1\mp T(q,q)^*\bigr]^{-1} (\Lc_0+\ka)^{-1}, \\
(\Lc_0+\ka) &= \bigl[1\mp T(q,q)\bigr]^{-1} (\Lc_q+\ka),
	\quad& (\Lc_0+\ka)^{-1} &= \bigl[1\mp T(q,q)^*\bigr] (\Lc_q+\ka)^{-1}, 
\end{alignat*}
we see that \eqref{373} implies \eqref{E:equiv} when $s=\pm1$. The remaining cases, $-1<s<1$,  follow by complex interpolation; see \cite[Ch.~1]{LionsMagenes}.
\end{proof}

The basic conservation laws for the continuum Calogero--Moser models are expressed through quadratic forms based on the Lax operator.  However, the Lax operator changes as $q(t)$ evolves and so Theorem~\ref{T:equiv} plays the important role of connecting these quantities to the time-independent (and very familiar) $H^s$ norms.

In this paper, we wish to study the continuum Calogero--Moser models in the scale-invariant space $L^2_+$.  This corresponds to the $s=0$ case of \eqref{E:equiv}, which is absolutely trivial.  Nevertheless, Theorem~\ref{T:equiv} has an important role to play even in this setting:  We will use it to preclude frequency cascade behaviour or, what is equivalent, to show that orbits are equicontinuous.  To do this, we develop an expression of traditional $L^2$-equicontinuity based on the functional calculus of the operators $\Lc_q$; see Theorem~\ref{T:equi-c} below.  Let us begin by recalling the traditional notion:

\begin{definition}\label{D:equi}
A bounded set $Q\subseteq L^2$ is \emph{$L^2$-equicontinuous} if and only if
\begin{equation}\label{equi:defn1}
\lim_{\delta\to 0}\ \sup_{q\in Q}\ \sup_{|y|<\delta} \ \int \bigl| q(x+y) -q(x)\bigr|^2 \,dx = 0.
\end{equation}
\end{definition}

It is well known that \eqref{equi:defn1} is equivalent to $L^2$-tightness of the corresponding set of Fourier transforms, $\{\widehat q: q \in Q\}$, which in turn is equivalent to tightness of the family of measures $\{ |\widehat q(\xi)|^2\,d\xi : q\in Q\}$.  Our perspective is this: Think of these measures as the spectral measures associated to the operator $-i\partial_x$ and the vectors $q\in Q$.  (We consider $Q\subset L^2_+$ and so may use the operator $\Lc_0$ instead.)  In this way, we see that the proper adaptation of equicontinuity to some new operator $A$ is to demand that spectral measures are tight.

We thought it was important to share this intuition that underlies what follows because expository efficiency demands a rather different approach.  First, we will employ the functional calculus, rather than spectral measures.  For example, we express tightness of $\Lc_0$ spectral measures as \eqref{equi2}.  Second, we will be dealing with both a set of vectors $Q$ and a family of operators $\{\Lc_q : q\in Q\}$.  Taking an abstract approach would then compel us to introduce the notion of \emph{uniformly} spectrally tight across a \emph{collection} of operators.  What we really need is the following:  
%
%

\begin{theorem}\label{T:equi-c}
Fix $M<M_*$.   For $Q \subset B_M$, the following are equivalent:
\begin{gather}
\label{equi1} \text{$Q$ is $L^2$-equicontinuous}, \\
\label{equi2} \lim_{\kappa\to\infty}\ \sup_{q\in Q}\ \bigl\langle q, \tfrac{(\Lc_0+1)^2}{(\Lc_0+1)^2+\kappa^2} q \bigr\rangle =0, \\
\label{equi3} \lim_{\kappa\to\infty}\ \sup_{q\in Q}\ \bigl\langle q, \tfrac{(\Lc_q+1)^2}{(\Lc_q+1)^2+\kappa^2} q \bigr\rangle =0.
\end{gather}
\end{theorem}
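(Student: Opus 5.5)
The equivalence of \eqref{equi1} and \eqref{equi2} is classical and uses only the Fourier transform. Since $\Lc_0=-i\partial_x$ acts on $L^2_+$ as multiplication by $\xi\ge 0$, \eqref{equi2} reads
\[
\lim_{\kappa\to\infty}\sup_{q\in Q}\int \tfrac{(\xi+1)^2}{(\xi+1)^2+\kappa^2}|\widehat q(\xi)|^2\,d\xi=0,
\]
with a sum replacing the integral on $\T$. The multiplier is $\geq \tfrac12$ when $\xi+1\geq\kappa$, and it is $\leq (\xi+1)^2/\kappa^2$ everywhere. Hence \eqref{equi2} is equivalent to the tail condition $\lim_{R\to\infty}\sup_{q\in Q}\int_{\xi>R}|\widehat q|^2=0$. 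By Plancherel,
\[
\int|q(x+y)-q(x)|^2\,dx = 2\pi\int |e^{i\xi y}-1|^2|\widehat q(\xi)|^2\,d\xi,
\]
and this tail condition is equivalent to \eqref{equi1}.

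The substance is \eqref{equi2}$\iff$\eqref{equi3}, and I will deduce it from Theorem~\ref{T:equiv} with $s=\pm1$. The first step is the resolvent identity
\[
\tfrac{(A+1)^2}{(A+1)^2+\kappa^2}=1-\kappa^2\bigl[(A+1)^2+\kappa^2\bigr]^{-1}.
\]
In the second step I replace this by a single-operator version. Writing $F_A(\kappa):=\langle q,(A+1)(A+1+\kappa)^{-1}q\rangle=\|q\|^2-\kappa\langle q,(A+1+\kappa)^{-1}q\rangle$ for $A$ positive, elementary inequalities between the scalar functions $\tfrac{t^2}{t^2+\kappa^2}$ and $\tfrac{t}{t+\kappa}$ for $t\geq1$ give
\[
\tfrac{t^2}{t^2+\kappa^2}\le \tfrac{2t}{t+\kappa}
\qquad\text{and}\qquad
\tfrac{t}{t+\kappa}\le \tfrac{t^2}{t^2+\kappa^2}+\sqrt{\tfrac{t^2}{t^2+\kappa^2}}\,.
\]
A more robust variant is $\tfrac{t}{t+\kappa}\le \tfrac{t^2}{t^2+\kappa'^2}+\tfrac{\kappa'}{\kappa}$ for $\kappa'<\kappa$. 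Either way, \eqref{equi3} is equivalent to $\sup_Q F_{\Lc_q}(\kappa)\to0$, and likewise for $\Lc_0$. Since $\Lc_q\ge -\tfrac{1}{2\pi}M>-1$ by \eqref{<S<}, the functional calculus applies with $t=\lambda+1>0$, possibly after shifting by a constant depending on $M$. Hence it suffices to compare $\langle q,(\Lc_q+1)(\Lc_q+1+\kappa)^{-1}q\rangle$ with its $\Lc_0$ analogue.

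For the comparison I write $g=(\Lc_q+1+\kappa)^{-1/2}q$ and $g_0=(\Lc_0+1+\kappa)^{-1/2}q$. The quantity $F_{\Lc_q}(\kappa)$ is then the quadratic form of $\Lc_q+1$ at $g$. The third step uses \eqref{E:middle} and \eqref{<S<}: for $q\in B_M$ the forms of $\Lc_q+\kappa'$ and $\Lc_0+\kappa'$ are comparable with constants depending only on $M$. This is exactly the $s=\pm\tfrac12$ case of Theorem~\ref{T:equiv}, uniformly in $\kappa'\ge1$.

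It remains to relate $g$ and $g_0$, and this is where I expect the real difficulty. I would avoid fractional powers by expressing $F_A$ through $\kappa\|(A+1+\kappa)^{-1}\cdot\|$ type quantities, using the identity $\kappa(A+1+\kappa)^{-1}=1-(A+1)(A+1+\kappa)^{-1}$. The $s=-1$ case of \eqref{E:equiv} controls $\|(\Lc_q+\kappa)^{-1}f\|\simeq\|(\Lc_0+\kappa)^{-1}f\|$ uniformly. Consequently
\[
\sup_{q\in Q}\bigl\|\kappa(\Lc_q+\kappa)^{-1}q-q\bigr\|\to0
\iff
\sup_{q\in Q}\bigl\|\kappa(\Lc_0+\kappa)^{-1}q-q\bigr\|\to0 .
\]
To prove this I apply the $s=\pm1$ equivalence to $f=(\Lc_q+1)q$ in the weak sense. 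More concretely, the factorization \eqref{E:rightleft} gives
\[
(\Lc_q+\kappa)^{-1}=[1\mp T^*]^{-1}(\Lc_0+\kappa)^{-1},
\]
and $[1\mp T^*]^{-1}$ and $1\mp T^*$ are bounded by $C_M$ via \eqref{373}. The key additional input is that $\|T(q,q)^*q\|$ is small when $q$ is high-frequency-free, plus an approximation argument splitting $q=q_{\le N}+q_{>N}$.

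The main obstacle is this final step. A uniform operator-norm equivalence yields comparability of tails only up to constants. Therefore I need to check that the error terms, such as $(\Lc_0+\kappa)^{-1}qC_+(|q|^2)$, are small uniformly over $B_M$ as $\kappa\to\infty$. This should follow from $\|T(q,q)\|_{\I_2}\to0$ for $q$ replaced by its high-frequency part, together with tightness transferred in each direction.
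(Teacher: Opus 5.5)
Your argument for \eqref{equi1}$\iff$\eqref{equi2} is fine. So is your reduction of \eqref{equi3} (and \eqref{equi2}) to uniform smallness of $\langle q,(A+1)(A+1+\kappa)^{-1}q\rangle$ with $A=\Lc_q$ (resp.\ $A=\Lc_0$).

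The gap is the comparison step itself: \eqref{equi2}$\iff$\eqref{equi3} is never actually proved. Your ``consequently'' does not follow. A two-sided bound $\|(\Lc_q+\kappa)^{-1}f\|\simeq\|(\Lc_0+\kappa)^{-1}f\|$ with constants different from $1$ says nothing about the differences $\kappa(\Lc_q+\kappa)^{-1}q-q$ and $\kappa(\Lc_0+\kappa)^{-1}q-q$. Your proposed repair uses smallness of $T(q,q)^*q=(\Lc_0+\kappa)^{-1}qC_+(|q|^2)$ after splitting $q=q_{\le N}+q_{>N}$. This requires $\sup_{q\in Q}\|q_{>N}\|_{L^2}$ to be small. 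In the direction \eqref{equi3}$\Rightarrow$\eqref{equi2}, that is precisely the equicontinuity you are trying to establish, so as sketched the argument is circular. In the other direction it is left unverified.

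The missing idea is that comparability up to constants is all you need. The constants are absorbed into a rescaling of $\kappa$, provided you compare operators via operator monotonicity rather than comparing vectors such as $g$ and $g_0$.

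The paper uses the $s=1$, $\kappa=1$ case of Theorem~\ref{T:equiv} to get $C_M^{-2}(\Lc_0+1)^2\le(\Lc_q+1)^2\le C_M^2(\Lc_0+1)^2$ as quadratic forms. Since $X\mapsto\frac{X}{X+\kappa^2}=1-\kappa^2(X+\kappa^2)^{-1}$ is operator increasing (inversion is operator decreasing), this sandwiches the $\Lc_q$ quantity in \eqref{equi3} between the $\Lc_0$ quantities at $C_M\kappa$ and $C_M^{-1}\kappa$; see \eqref{E:square_equiv'}. The equivalence is then immediate.

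Your own reduction would close the same way, and even more cheaply. By \eqref{E:middle} and \eqref{<S<} at $\kappa=1$, you have $c(\Lc_0+1)\le\Lc_q+1\le C(\Lc_0+1)$ with $c,C>0$ depending only on $M$; this is your $s=\pm\frac12$ observation, with $c=1-\frac{M}{2\pi}$ in the focusing case and $c=1$ in the defocusing case. Monotonicity of $X\mapsto\frac{X}{X+\kappa}$ then gives $\frac{\Lc_0+1}{\Lc_0+1+\kappa/c}\le\frac{\Lc_q+1}{\Lc_q+1+\kappa}\le\frac{\Lc_0+1}{\Lc_0+1+\kappa/C}$. This needs only the Carleman-based form bound, not the $s=\pm1$ resolvent estimates \eqref{373}.
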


\begin{proof}
The equivalence of \eqref{equi1} and \eqref{equi2} is quite elementary; details can be found, for example, in \cite[\S4]{killip2019kdv}.
Note that the meaning of \eqref{equi2} would be unchanged if we replaced all instances of $(\Lc_0+1)$ by $\Lc_0$.  Likewise $(\Lc_q+1)$ could be replaced by $\Lc_q$ in \eqref{equi3}.  We chose the form given above so as to exploit the following consequence of Theorem~\ref{T:equiv}: There is a constant $C_M$ so that
\begin{equation}\label{E:square_equiv}
C_M^{-2} (\Lc_0+1)^2 \leq (\Lc_q+1)^2 \leq C_M^2 (\Lc_0+1)^2 \quad\text{for all $q\in Q$}
\end{equation}
in the sense of quadratic forms.

For each $\kappa>0$, the function
\begin{equation*}
X \mapsto \tfrac{X}{X+\kappa^2} = 1 - \tfrac{\kappa^2}{X+\kappa^2}
\end{equation*}
is operator increasing on the class of positive-definite operators, because inversion is operator decreasing.  (For the general theory of operator monotone functions, see \cite{Simon:Loewner}).  Applying this monotonicity to \eqref{E:square_equiv}, we deduce that 
\begin{equation}\label{E:square_equiv'}
\tfrac{(\Lc_0+1)^2}{(\Lc_0+1)^2 + C_M^{2} \kappa^2} \leq \tfrac{(\Lc_q+1)^2}{(\Lc_q+1)^2 + \kappa^2} \leq \tfrac{(\Lc_0+1)^2}{(\Lc_0+1)^2 + C_M^{-2} \kappa^2}.
\end{equation}
The equivalence of \eqref{equi2} and \eqref{equi3} follows immediately.
\end{proof}

Later, in Section~\ref{sec:GWP_kappa}, we will use Theorem~\ref{T:equi-c} to propagate equicontinuity from a set of initial data to the ensemble of trajectories emanating from it.  One central application of this equicontinuity is to control the difference flow.  As we are working at the scaling-critical level, one must accept that the rate of convergence is governed solely by the equicontinuity properties of the initial data.  The final result of this section contains some of the technical manifestations of equicontinuity we will employ to control the difference flow.

\begin{corollary}\label{C:decay}
Fix $M<M_*$ and suppose $Q \subset B_M$ is equicontinuous.  Then
\begin{align}
\lim_{\ka\to\infty} \sup_{q\in Q}\ \|\Lc_q m(\ka,q)\|_{L^2_+}=0,\label{item2}\\
\lim_{\ka\to\infty}\sup_{q\in Q}\|\ka \Lc_q (\Lc_q+\ka)^{-1}m(\ka,q)\|_{L^2_+}=0,\label{item4}
\end{align}
where $m(\kappa,q) = (\Lc_q+\ka)^{-1}q$.
\end{corollary}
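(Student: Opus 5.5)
Both limits reduce to spectral calculus of the single selfadjoint operator $\Lc_q$ applied to $q$, combined with the tightness statement \eqref{equi3} from Theorem~\ref{T:equi-c}. By Theorem~\ref{T:equiv}, for $q\in B_M$ and $\kappa\geq1$ the operator $\Lc_q+\kappa$ is positive definite and invertible. Hence $m(\kappa,q)=(\Lc_q+\kappa)^{-1}q$ is well defined, and all the operators involved are functions of $\Lc_q$.

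For \eqref{item2}, we write
\[
\|\Lc_q m(\ka,q)\|^2 = \bigl\langle q, \tfrac{\Lc_q^2}{(\Lc_q+\ka)^2} q\bigr\rangle .
\]
Two pointwise facts on the spectrum control the multiplier. First, $\Lc_q+1>0$ gives $\sigma(\Lc_q)\subset(-1,\infty)$. Second, by \eqref{<S<} and \eqref{E:middle} with $\kappa=1$, we have $\Lc_q+1\geq (1-\tfrac{M}{2\pi})(\Lc_0+1)\geq c_M>0$ in the focusing case. On the spectrum, we then bound
\[
\tfrac{\lambda^2}{(\lambda+\ka)^2}\lesssim_M \tfrac{(\lambda+1)^2}{(\lambda+1)^2+\ka^2}\quad\text{for } \ka\geq 2 .
\]
For $\lambda+1\geq \ka$ both sides are $\simeq1$. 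For $0\le\lambda+1\le\ka$, we have $(\lambda+\ka)^2\gtrsim\ka^2$ (using $\ka\geq2$ and $\lambda>-1$) and $\lambda^2\lesssim(\lambda+1)^2+1$. The leftover term $1/\ka^2$ is harmless, since $\langle q,\tfrac{1}{\ka^2}q\rangle\le M/\ka^2\to0$; alternatively, the lower bound $\lambda+1\geq c_M$ lets us absorb $1\lesssim_M(\lambda+1)^2$. Consequently $\|\Lc_q m\|^2\lesssim_M \langle q,\tfrac{(\Lc_q+1)^2}{(\Lc_q+1)^2+\ka^2}q\rangle + M\ka^{-2}$. The right-hand side tends to zero uniformly on $Q$ by \eqref{equi3}, and \eqref{equi3} holds because $Q$ is equicontinuous and Theorem~\ref{T:equi-c} applies.

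For \eqref{item4}, the same method applies with the multiplier $\tfrac{\ka^2\lambda^2}{(\lambda+\ka)^4}$. Since $\tfrac{\ka^2}{(\lambda+\ka)^2}\le \tfrac{\ka^2}{(\ka-1)^2}\le 4$ for $\lambda>-1$ and $\ka\ge2$, this multiplier is at most $4\tfrac{\lambda^2}{(\lambda+\ka)^2}$. Thus $\|\ka\Lc_q(\Lc_q+\ka)^{-1}m\|\le 2\|\Lc_q m\|$, and \eqref{item4} follows from \eqref{item2}.

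The only step needing care is the scalar inequality comparing $\lambda^2/(\lambda+\ka)^2$ with the tightness multiplier uniformly over $\lambda\in\sigma(\Lc_q)$. In particular, we must check the region where $\lambda$ is near $-1$ and the denominators do not degenerate. This is exactly where the mass restriction enters, through the uniform positivity of $\Lc_q+1$ from Theorem~\ref{T:equiv}. Everything else is functional calculus.
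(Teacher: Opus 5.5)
Your proposal is correct and is essentially the paper's proof. The paper likewise bounds $\frac{\lambda^2}{(\lambda+\ka)^2}\le 4\,\frac{(\lambda+1)^2+1}{(\lambda+1)^2+\ka^2}$ on $\sigma(\Lc_q)\subset(-1,\infty)$ for $\ka\ge 2$, absorbs the remainder via $\langle q,\ka^{-2}q\rangle\le M\ka^{-2}$, applies \eqref{equi3} through Theorem~\ref{T:equi-c}, and deduces \eqref{item4} from \eqref{item2} with the same factor of $2$. One small correction to your closing remark: on the $M\ka^{-2}$ route only $\Lc_q+1>0$ is used, so the uniform gap $c_M$ is unnecessary; the uniformity coming from $M<M_*$ is really consumed inside Theorem~\ref{T:equi-c}, not in the scalar inequality.
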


\begin{proof}
For each $q\in Q$ and $\kappa\geq 2$,
\begin{align*}
\|\Lc_q m(\ka,q)\|_{L^2_+}^2 = \bigl\langle q,\tfrac{\Lc_q^2}{(\Lc_q+\kappa)^2} q \bigr\rangle
	\leq 4 \bigl\langle q,\tfrac{(\Lc_q+1)^2+1}{(\Lc_q+1)^2+\kappa^2} q \bigr\rangle
	\leq 4 \bigl\langle q,\tfrac{(\Lc_q+1)^2}{(\Lc_q+1)^2+\kappa^2} q \bigr\rangle + \tfrac{4}{\kappa^2} M ,
\end{align*}
simply because $\Lc_q+1$ is positive definite and $Q \subset B_M$.  The claim \eqref{item2} now follows from Theorem~\ref{T:equi-c} and the assumed equicontinuity of $Q$.


As $\Lc_q+1$ is positive definite, $\kappa\geq 2$ implies 
\begin{align*}
\|\ka \Lc_q (\Lc_q+\ka)^{-1}m(\ka,q)\|_{L^2_+} \leq 2  \| \Lc_q m(\kappa, q)\|_{L^2_+}.
\end{align*}
Thus \eqref{item4} also follows from \eqref{item2}.
\end{proof}

\section{Construction of the symplectic form}\label{sec:construction}

The main goal of this section is to show that \eqref{eqn:omega} defines a strong symplectic form on $B_{M_*}$ in both geometries.  Recall
\begin{equation*}
\om_q(f,g)=\Re\lan f,\Om (q)g\ran, \ \ \Om (q)g:=i(1\mp2 C_+ \Theta(q))g,\ \ \text{and} \ \   \Theta(q)g:=iq\tilde{\dd}^{-1}\Re(\bar{q}g).
\end{equation*}
The fact that $\omega$ is \emph{strong} guarantees that there is a bounded linear mapping from the Wirtinger derivative of the Hamiltonian to the corresponding Hamiltonian vector field; see \eqref{J} and \eqref{symplectic gradient}.

In the line setting, we already know that \eqref{eqn:omega_hash} defines a strong symplectic form on $L^2(\R)$.  This can be used to infer that $\omega$ is a closed $2$-form on $L^2_+(\R)$; however, it does not address the key question: nondegeneracy.  Indeed, \eqref{eqn:omega_hash} is a strong symplectic form throughout $L^2(\R)$, while its restriction to $L^2_+(\R)$ does become degenerate at large mass in the focusing case. 

As we also wish to treat the torus case, we will be developing our analysis from scratch, without relying on the observations of the previous paragraph.

\begin{theorem}\label{T:symplectic form}
The form $\om$ defined in \eqref{eqn:omega} is a strong symplectic form on $B_{M_*}$. In the focusing case, the form is degenerate on any ball $B_M$ with $M>M_*$.
\end{theorem}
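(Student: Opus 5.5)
The proof has four parts: $\omega$ is bounded, antisymmetric and closed on all of $L^2_+$; $\Omega(q)$ is Fredholm of index zero; $\Omega(q)$ is injective on $B_{M_*}$, using \eqref{E:op bound}; and an explicit kernel exists at mass $M_*$.

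\emph{Algebraic and analytic preliminaries.} For $f\in L^2_+$ the projection $C_+$ in $\Omega(q)$ can be dropped inside $\Re\langle f,\cdot\rangle$. This gives
\[
\omega_q(f,g)=\Re\langle f,ig\rangle \pm 2\int \Re(\bar q f)\,\tilde\partial^{-1}\Re(\bar q g)\,dx .
\]
Antisymmetry then follows from the anti-selfadjointness of $\tilde\partial^{-1}$ on real functions. Boundedness, with $|\omega_q(f,g)|\lesssim (1+\|q\|_{L^2}^2)\|f\|\|g\|$, follows from $\tilde\partial^{-1}:L^1\to L^\infty$ and H\"older. The form is smooth (indeed quadratic) in $q$. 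For closedness I compute
\[
D_h\omega_q(f,g)=\pm2\int \bigl[\Re(\bar h f)\,\tilde\partial^{-1}\Re(\bar q g)+\Re(\bar q f)\,\tilde\partial^{-1}\Re(\bar h g)\bigr].
\]
Since $\omega$ is constant-coefficient in the linear structure, $d\omega(f,g,h)$ is the cyclic sum of these derivatives. Using antisymmetry of $\tilde\partial^{-1}$, the six terms cancel in pairs, so $d\omega=0$.

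\emph{Fredholm structure.} For strong nondegeneracy I must show that the real-linear map $\Omega(q)=i(1\mp2C_+\Theta(q))$ is a bijection of $L^2_+$. The open mapping theorem then gives the bounded inverse $J$. I claim that $C_+\Theta(q)$ is compact. For bounded, compactly supported (or trigonometric-polynomial) $q$ this comes from Rellich-type compactness of $\tilde\partial^{-1}$ on $L^1$ into $C$ locally, followed by multiplication by $q$. For general $q$ I approximate $q$ by such $q_N$ in $L^2$ and use $\|\Theta(q)-\Theta(q_N)\|_\op\lesssim \|q-q_N\|_{L^2}(\|q\|+\|q_N\|)$. On the line I expect to need a little care at spatial infinity; there I would instead factor through the Hilbert--Schmidt operators of Lemma~\ref{L:SST}. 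With compactness in hand, $1\mp2C_+\Theta(q)$ is Fredholm of index zero, so it suffices to prove injectivity.

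\emph{Injectivity on $B_{M_*}$ (the main step).} Only the focusing sign needs an argument: in the defocusing case $\omega_q(g,\cdot)$ paired against suitable test vectors gives a sum of two terms of the same sign. Suppose $g=2C_+\Theta(q)g$. I pair this identity with $g$ itself and with $ig$, and use that $\tilde\partial^{-1}$ is anti-selfadjoint and diagonal in Fourier space. The aim is to rewrite $\|g\|^2$ as $2\langle g,C_+\bar q(-i\tilde\partial)^{-1}qg\rangle$; the complex-conjugate half of $\Re(\bar q g)$ contributes either zero or a term of favorable sign. Then \eqref{E:op bound} gives $\|g\|^2\le \tfrac{1}{\pi}\cdot\tfrac{M(q)}{2}\|g\|^2\cdot(\ldots)$, i.e. $\|g\|^2\le \tfrac{M(q)}{2\pi}\|g\|^2$, which forces $g=0$ when $M(q)<2\pi$. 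On the torus, \eqref{E:op bound} requires $\widehat g(0)=0$. So I would first split off the constant mode of $g$, using that $\tilde\partial^{-1}$ kills constants, and show that the kernel equation forces $\widehat g(0)=0$. Getting exactly the right identity (the factor $2$ and the role of $\Re$) is where I expect the real work to be.

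\emph{Degeneracy above $M_*$.} Any $B_M$ with $M>M_*$ contains the solitons $q_n$ of Lemma~\ref{L:necessary}, which have mass exactly $2\pi$. So it suffices to exhibit a nonzero kernel element of $\Omega(q_n)$. I would look for it among vectors saturating the Cauchy--Schwarz step in Lemma~\ref{L:Carleman}. The first candidates are $g=q_n$ (the generator of phase rotation) and $g=q_n'$ or $xq_n'+\tfrac12q_n$ (the generators of translation and scaling). I would verify $\Omega(q_n)g=0$ directly, using $\Lc_{q_n}q_n=\lambda q_n$ and the explicit Fourier transforms of $q_n$.
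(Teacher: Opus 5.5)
\textbf{There is a genuine gap in the injectivity step.} Your treatment of antisymmetry and closedness is fine, and so is the reduction, via Fredholm theory, to injectivity of $\Om(q)$. (Compactness is easiest to see from the fact that the integral kernels of $\Theta(q)$ lie in $L^2_{x,y}$; as literally stated, $\tilde\dd^{-1}$ is not compact from $L^1$ into $C$.)

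Injectivity is the heart of the theorem, and your route does not reach it. If you pair the kernel equation $g=\pm2C_+\Theta(q)g$ with $g$, the antisymmetry of $\tilde\dd^{-1}$ yields exactly
\[
\|g\|_{L^2}^2=\pm\bigl\lan \bar q g,(-i\tilde\dd)^{-1}(\bar q g)\bigr\ran .
\]
Pairing with $ig$ gives nothing new. This is not $2\lan g,C_+\bar q(-i\tilde\dd)^{-1}qg\ran$. For the kernel vector $g=e^{ix}$ at $q\equiv1$ on $\T$, the former equals $2\pi$ and the latter $4\pi$. Even if your identity held, it would only give the threshold $\pi$.

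The product that actually appears is $\bar q g$, which has frequencies of both signs. So the right-hand side is indefinite for \emph{either} sign, and the defocusing case is not immediate either. Discarding the negative frequencies does not rescue the argument. On $\T$, $\sum_{\xi>0}|\widehat{\bar q g}(\xi)|^2/\xi$ is not bounded by any multiple of $\|q\|_{L^2}^2\|g\|_{L^2}^2$: take $\widehat q=\widehat g=N^{-1/2}1_{[0,N]}$ and the sum grows like $\log N$. Carleman's inequality therefore has nothing to act on.

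Your zero-mode plan fails for a related reason. The kernel equation alone cannot force $\widehat g(0)=0$: at the mass-$2\pi$ torus solitons $q_n$, $n\geq1$, of Lemma~\ref{L:necessary}, there are kernel vectors with nonzero mean.

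\textbf{The missing idea is the change of unknown in Lemma~\ref{lem:lem3.2}.} Since $\Theta(q)^2=0$ and $\overline{\Theta(q)f}=-\Theta(\bar q)\bar f$, the map $g\mapsto G:=\overline{(1\mp2\Theta(q))g}$ identifies $\ker\Om(q)$ with the set of $G\in(L^2_-)^\perp$ satisfying $G\mp i\bar q\tilde\dd^{-1}(qG)\in L^2_-$. Pairing with $G$ now gives $\|G\|_{L^2}^2=\pm\lan qG,(-i\tilde\dd)^{-1}(qG)\ran$, in which the \emph{holomorphic} product $qG$ appears. Then:
\begin{itemize}
\item the defocusing case follows by sign;
\item the focusing case follows from \eqref{E:op bound} with its sharp constant $\tfrac1{2\pi}$;
\item the mean-zero hypothesis of \eqref{E:op bound} on $\T$ is automatic, because it is $G$, not $g$, that lies in $(L^2_-)^\perp$.
\end{itemize}

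The same reformulation turns degeneracy into exhibiting equality in \eqref{E:op bound}. The paper checks $q\equiv1$, $G=e^{ix}$ on $\T$, and $q=G=\sqrt2(x+i)^{-1}$ on $\R$. Your candidate kernel vectors are only a guess. On $\R$ the scaling generator does work at that point. At $q\equiv1$ on $\T$, however, the kernel is $\C e^{ix}$, whereas $iq=i$ and $q'=0$.
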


\begin{proof}
As discussed in Section~\ref{S:Note}, $\tilde\partial^{-1}$ maps $L^1\to L^\infty$; thus, $\Theta(q)$ is an $L^2$-bounded $\R$-linear operator and so $\omega_q$ is well-defined throughout $L^2_+\times L^2_+$.  Let us also observe, for future use, that $\Theta(q)^2=0$ and so
\begin{equation}\label{eqn:S_inverse}
(1\mp2 \Theta(q))^{-1}=1\pm2 \Theta(q).
\end{equation}

To prove that $\om$ is a symplectic form, we have to verify that it is bilinear, antisymmetric, nondegenerate, and closed. We will demonstrate each of these properties successively.  The mass restriction will only enter into the discussion of nondegeneracy.

The fact that $\om$ is a bilinear form follows immediately from the definition \eqref{eqn:omega}, which we may rewrite as
\begin{align*}
\om_q(f,g)=\Re\,\lan f,\Om (q)g\ran = \Re\, \lan f,ig\ran \pm 2\Re\, \bigl\lan f,C_+\bigl[q\tilde{\dd}^{-1}\Re(\bar{q}g) \bigr]\bigr\ran.
\end{align*}

One readily sees that 
\begin{align*}
 \Re\, \lan f,ig\ran  = - \Re\, \lan g, if\ran.
\end{align*}
Moreover, using the fact that $\tilde{\dd}^{-1}$ is an anti-selfadjoint operator which maps real-valued functions to real-valued functions, we have
\begin{align*}
\Re\,\bigl\lan f, C_+\bigl[q\tilde{\dd}^{-1}\Re(\bar{q}g)\bigr]\bigr\ran&=\bigl\lan\Re (\bar{q}f),\tilde{\dd}^{-1}\Re(\bar{q}g)\bigr\ran\\
&=-\bigl\lan \tilde{\dd}^{-1}\Re(\bar{q}f),\Re(\bar{q}g)\bigr\ran\\
&=-\Re\,\bigl\lan q\tilde{\dd}^{-1}\Re(\bar{q}f),g\bigr\ran\\
&=-\Re\, \bigl\lan g, C_+\bigl[q\tilde{\dd}^{-1}\Re(\bar{q}f)\bigr]\bigr\ran.
\end{align*}
Combining the last two displays, we deduce that  
$$
\om_q(f,g)=-\om_q(g,f),
$$
that is, $\om$ is an antisymmetric 2-form.

We now turn to the question of nondegeneracy of $\om$.  Recall that $\om$ is said to be nondegenerate at a point $q\in L^2_+$ if the following condition holds:
$$
\om_q(f,g)= 0 \text{ for all $f\in L^2_+$} \qquad\text{if and only if} \qquad g=0.
$$
As a first step in this direction, we observe the following equivalent condition for degeneracy:

\begin{lemma}\label{lem:lem3.2}
The form $\om$ is degenerate at a point $q\in L^2_+$ if and only if there exists $G\in (L^2_-)^{\perp}\setminus\{0\}$ such that
\begin{align}\label{2:50}
G\mp i\bar{q}\tilde{\dd}^{-1}(qG)\in L^2_-.
\end{align}
\end{lemma}

\begin{proof}
Recalling \eqref{eqn:omega}, we see that $\om$ is degenerate at a point $q\in L^2_+$ if and only if there exists $g\in L^2_+\backslash\{0\}$ such that
\begin{align}\label{2:51}
\Om (q)g=0 \qtq{or equivalently,} (1\mp 2 \Theta (q))g\in (L^2_+)^{\perp}.
\end{align}

If such a $g$ exists, we define $G:=\overline{(1\mp2\Theta(q))g}\in (L^2_-)^{\perp}$. Then using \eqref{eqn:S_inverse} and the property
\begin{equation}\label{eqn:bar_S} 
\overline{\Theta(g)f}= - \Theta(\bar{g})\bar{f},
\end{equation}
we obtain
\begin{equation}\label{eqn:gbar}
\bar{g}=(1\mp 2\Theta(\bar{q})){G}={G}\mp i\bar{q}\tilde{\dd}^{-1}(q{G}+\bar{q}\widebar{G})\in L^2_-.
\end{equation}
In particular, as $g\neq 0$ we must have that $G\neq0$. Moreover, as $q\in L^2_+$ and $G\in (L^2_-)^{\perp}\subseteq L^2_+$, we have $\bar{q}\tilde{\dd}^{-1}(\bar{q}\widebar{G})\in L^2_-$.  Thus, \eqref{2:50} holds.

Conversely, if there exists $G\in (L^2_-)^{\perp}\setminus\{0\}$ satisfying \eqref{2:50}, then we define
$$
g:=\overline{(1\mp 2\Theta(\bar q))G}.
$$
Using \eqref{2:50} and the fact that $q\in L^2_+$ and $G\in (L^2_-)^{\perp}\subseteq L^2_+$, we deduce that 
$$
\bar g = G\mp i\bar q \tilde{\dd}^{-1}(gG) \mp i\bar q \tilde{\dd}^{-1}(\bar{q}\widebar{G}) \in L^2_-
$$
and so $g\in L^2_+\setminus\{0\}$.  Finally, combining \eqref{eqn:S_inverse} and \eqref{eqn:bar_S}, we find that
$$
(1\mp 2\Theta(q))g=\widebar{G}\in (L^2_+)^\perp,
$$
which, in view of \eqref{2:51}, shows that $\om$ is degenerate at $q$.
\end{proof}

Equipped with Lemma~\ref{lem:lem3.2}, we are now ready to address the (non)-degeneracy of $\om$. We first prove that $\om$ is nondegenerate on $B_{M_*}$.  To this end, we fix $q\in B_{M_*}$ and suppose that $G\in (L^2_-)^{\perp}$ satisfies \eqref{2:50}. We will show that necessarily $G=0$, which by Lemma~\ref{lem:lem3.2} yields the nondegeneracy of $\om$.

We first observe that \eqref{2:50} guarantees 
\begin{equation}\label{eqn:degen}
0=\lan G, G\mp i\bar{q}\tilde{\dd}^{-1}(qG)\ran=\|G\|_{L^2_+}^2\mp\|\tilde{\dd}^{-1}(qG)\|_{\dot{H}^{1/2}}^2.
\end{equation}
In the defocusing case, the right-hand side of \eqref{eqn:degen} is clearly positive definite, and so we must have $G=0$. In the focusing case, we apply \eqref{E:op bound} to obtain
$$
0=\lan G,G\ran -\lan G,i\bar{q}\tilde{\dd}^{-1}(qG)\ran\geq\bigl(1-\tfrac1{2\pi}\|q\|_{L^2_+}^2\bigr)\|G\|_{(L^2_-)^{\perp}}^2.
$$
If $q\in B_{M_*}$, this is only possible for $G=0$.  This completes the proof that $\om$ is nondegenerate on $B_{M_\ast}$.

Next, we show by explicit counterexample that in the focusing case, $\om$ is degenerate on every ball $B_M$ with $M>M_\ast$. In the case of the torus, we consider the point $q\equiv1$ and take $G(x)=e^{ix}$.  For the problem posed on the real line, we take $q(x)=G(x) = \sqrt{2} (x+i)^{-1}$.  In both cases, $\|q\|^2_{L^2_+}=2\pi$, $G\in(L^2_-)^{\perp}\backslash\{0\}$, and
$$
G-i\bar{q}\tilde{\dd}^{-1}(qG)\in L^2_-.
$$
It thus follows from Lemma \ref{lem:lem3.2} that the form $\omega$ is degenerate at $q$.  

To conclude that $\om$ is a symplectic form, it remains to show that it is closed.  To this end, it suffices to show (cf. \cite{lang1995differential}) that for all tangent vectors $f,g,h\in L^2_+$ we have
\begin{align}\label{2:52}
D_f(\om(g,h))+D_g(\om(h,f))+D_h(\om(f,g))=0.
\end{align}

For fixed $q,f,g,h\in L^2_+$, using the fact that $\tilde{\dd}^{-1}$ is an anti-selfadjoint operator which maps real-valued functions to real-valued functions, we obtain
\begin{align*}
\bigl[D_f(\om(g,h))\bigr](q)
&=\mp\lim_{\eps\to0}\f{\Re\,\lan g,2i[\Theta(q+\eps f)-\Theta(q)]h\ran}{\eps}\\
&=\pm2\Re\,\lan g,f\tilde{\dd}^{-1}\Re(\bar{q}h)+q\tilde{\dd}^{-1}\Re(\bar{f}h)\ran\\
&=\pm2\,\lan\Re(g\bar{f}),\tilde{\dd}^{-1}\Re(\bar{q}h)\ran \mp 2\,\lan\Re(f\bar{h}),\tilde{\dd}^{-1}\Re(\bar{q}g)\ran.
\end{align*}
Thus,
\begin{align*}
\bigl[D_f(\om(g,h))\bigr](q)&+\bigl[D_g(\om(h,f))\bigr](q)+\bigl[D_h(\om(f,g))\bigr](q)\nonumber\\
&=\pm2\,\lan\Re(g\bar{f}),\tilde{\dd}^{-1}\Re(\bar{q}h)\ran\mp2\,\lan\Re(f\bar{h}),\tilde{\dd}^{-1}\Re(\bar{q}g)\ran\\
&\quad\pm2\,\lan\Re(h\bar{g}),\tilde{\dd}^{-1}\Re(\bar{q}f)\ran\mp2\,\lan\Re(g\bar{f}),\tilde{\dd}^{-1}\Re(\bar{q}h)\ran \\
&\quad\pm2\,\lan\Re(f\bar{h}),\tilde{\dd}^{-1}\Re(\bar{q}g)\ran\mp2\,\lan\Re(h\bar{g}),\tilde{\dd}^{-1}\Re(\bar{q}f)\ran =0,
\end{align*}
which settles \eqref{2:52}.

To complete the proof of Theorem~\ref{T:symplectic form}, it remains to show that $\om$ is a \emph{strong} form on $B_{M_*}$.  This amounts to showing (cf. \cite{chernoff2006properties}) the following:

\begin{proposition}\label{P:J}
For each $q\in B_{M_*}$, there is a bounded and boundedly invertible $\R$-linear map $J(q):L^2_+\to L^2_+$ satisfying
\begin{align}\label{J}
\om_q(f, J(q)g) = \Re \,\langle f, g\rangle \qtq{for all} f,g\in L^2_+.
\end{align}
\end{proposition}

\begin{proof}
On the basis of our definition \eqref{eqn:omega} of $\om_q$, such a $J(q)$ would satisfy
$$
\Re\, \langle f, g\rangle = \Re\, \langle f, \Om(q) J(q) g\rangle \qtq{for all} f,g\in L^2_+
$$ 
and consequently, 
\begin{align}\label{J inv}
\Om(q) J(q) = \Id \qtq{for all} q\in B_{M_*}.
\end{align}
Thus, to prove the proposition, it suffices to show that $\Om(q)$ is boundedly invertible.  In turn, this will follow from the Fredholm alternative and the observation that $-i\Om(q)= 1\mp 2 C_+ \Theta(q)$ is a compact perturbation of the identity with $\ker\Om(q)=\{0\}$.

That $\ker\Om(q)=\{0\}$ follows from the nondegeneracy of $\om$ on $B_{M_*}$.  On the other hand, using \eqref{delta inverse}, we may write
\begin{align*}
\Theta(q)g&=\tfrac{i}{4}\!\int_{0}^{2\pi}\!\bigl(1-\tfrac{y}{\pi}\bigr)q(x)(\bar{q}g)(x-y)\,dy +\tfrac{i}{4}\!\int_{0}^{2\pi}\!\bigl(1-\tfrac{y}{\pi}\bigr)q(x)(q\bar{g})(x-y)\,dy \quad\text{on $\T$},\\
\Theta(q)g&=\tfrac{i}{4}\!\int_\R\sgn(x-y)q(x)\bar{q}(y) g(y) \,dy+\tfrac{i}{4}\!\int_\R\sgn(x-y) q(x)q(y)\bar{g}(y)\,dy \qquad \, \text{on $\R$}.
\end{align*}
The integral kernels above are in $L^2_{x,y}$ and so correspond to Hilbert--Schmidt operators. As the map $g\mapsto \bar{g}$ is bounded on $L^2_+$ and the Cauchy--Szeg\H{o} projection $ C_+:L^2\rightarrow L^2_+$ is also bounded, it follows that $C_+ \Theta(q)$ is a compact operator on $L^2_+$ and so $-i\Om(q)= 1\mp 2 C_+ \Theta(q)$ is a compact perturbation of the identity.
\end{proof}

Proposition~\ref{P:J} shows that $\omega_q$ is a \emph{strong} symplectic form and the proof of Theorem~\ref{T:symplectic form} is complete.
\end{proof}

From the perspective of Hamiltonian mechanics, the map $J(q)$ in \eqref{J} provides the link between the derivative of the Hamiltonian and the corresponding Hamiltonian vector field.  In view of \eqref{1:51}, this takes the form
\begin{align}\label{symplectic gradient}
\na_\om F(q)=2J(q) \tfrac{\dl F}{\dl \bar{q}}\bigr|_q
\end{align}
for any real-valued observable $F$.  Concomitant with this, $J(q)$ has the important role of defining the Poisson bracket:
\begin{equation}\label{PBJ}
\{F, G\} (q) := \omega_q( \nabla_\omega G,  \nabla_\omega F ) = 4 \Re \big\langle \tfrac{\dl F}{\dl \bar{q}}, J(q) \tfrac{\dl G}{\dl \bar{q}}  \bigr\rangle.
\end{equation}
Note that the reversal of $F$ and $G$ in the first equality is deliberate; we are following the sign conventions of \cite{MR2269239}.

The main difficulty in working with the symplectic form $\om$ on $L^2_+$, as opposed to \eqref{eqn:omega_hash} on $L^2$, is that the map $J(q)$ does not have an explicit formula. Fortunately, for quantities of interest to the continuum Calogero--Moser models, explicit formulae for the symplectic gradient are achievable, as we shall see in Section~\ref{sec:flows}.

\section{The Hamiltonian and commuting flows}\label{sec:flows}

The goal of this section is to complete the proofs of Theorems~\ref{T:R} and~\ref{T:T} by verifying that the Hamiltonians \eqref{eqn:H(q) R} and \eqref{H on T} generate the flows \eqref{CCMR} and \eqref{CCMT}, respectively, under the symplectic form defined in \eqref{eqn:omega}.  We will also derive the Hamiltonian flows corresponding to the generating function $\beta(\kappa, q)$ and the conserved quantities $\mathcal{E}_n(q)$ and present Lax pair formulations attendant to all these flows.

Our point of departure is the generating function
$$
\beta(\kappa,q) = \langle q, m(\ka,q)\rangle \qtq{where} m(\ka,q) = (\Lc_q  +\kappa)^{-1} q.
$$
By Theorem~\ref{T:equiv}, both $m(\ka,q)$ and $\beta(\ka,q)$ are well defined for all $\kappa\geq 1$ and $q\in B_{M_*}$. To ease notation, in what follows we will often use the shorthand $\beta_\ka$ for $\beta(\kappa,q)$ and $m_\ka$ for $m(\ka,q)$.

\begin{proposition}\label{lem:beta_results}
Fix $\ka\geq 1$.  Throughout $B_{M_\ast}$ we have
\begin{equation}\label{eqn:beta_deriv}
\f{\dl \beta_\ka}{\dl \bar{q}}=m_\ka\pm C_+\bigl[m_\ka C_-(q\overline m_\ka )\bigr]
\end{equation}
and the symplectic gradient of $\beta_\ka$ is given by
\begin{equation}\label{beta_symplectic_deriv}
\begin{aligned}
\na_\om\beta_\ka&=-2im_\ka\pm 2im_\ka C_+(q\overline m_\ka) \qquad\qquad\qquad\qquad\qquad \quad\,\text{on $\R$,}\\
\na_\om\beta_\ka&=-2im_\ka\pm 2im_\ka C_+(q\overline m_\ka)\mp\tfrac{i}{\pi} m_\ka \beta_\ka\mp\tfrac{i}{\pi}q\|m_\ka \|^2_{L^2} \quad\,\text{on $\T$}.
\end{aligned}
\end{equation}
The associated Hamiltonian flow 
\begin{align}\label{beta flow}
\dd_t q=\na_\om \beta_\ka(q)
\end{align}
admits the following Lax pair representation
\begin{equation}
\text{$q$ solves \eqref{beta flow}} \qquad \iff \qquad \dd_t \Lc_q = \bigl[\mathcal{P}^\ka_\beta,\Lc_q\bigr]\label{beta Lax pair}
\end{equation}
where
\begin{equation}\label{eqn:Peter_op_ka}
\begin{aligned}
\mathcal{P}^\ka_\beta&:=-2i(\Lc_q+\ka)^{-1}\pm 2im_\ka C_+ \overline m_\ka \qquad\qquad\qquad\qquad\qquad\qquad\quad \,\, \text{on $\R$,}\\
\mathcal{P}^\ka_\beta&:=-2i(\Lc_q+\ka)^{-1}\pm 2im_\ka C_+ \overline m_\ka\mp\tfrac{i}{\pi}\beta_\ka(\Lc_q+\ka)^{-1} \mp\tfrac{i}{\pi}\|m_\ka\|_2^2  \quad\text{on $\T$}.
\end{aligned}
\end{equation}
Finally, the Peter operator $\mathcal{P}^\ka_\beta$ enjoys the following special property:
\begin{equation}
\text{$q$ solves \eqref{beta flow}} \qquad \iff \qquad \dd_t q=\mathcal{P}^\ka_\beta q.\label{one true peter}
\end{equation}
\end{proposition}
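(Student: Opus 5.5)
We carry out four computations in turn: the Wirtinger derivative, the symplectic gradient, the Lax pair, and the Peter identity. All are first done for $q\in H^\infty_+\cap B_{M_*}$ and then extended by continuity, using Theorem~\ref{T:equiv} to control $(\Lc_q+\ka)^{-1}$ uniformly on $B_{M_*}$.

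\emph{Step 1: the Wirtinger derivative.} Differentiate $\beta_\ka=\langle q,(\Lc_q+\ka)^{-1}q\rangle$ in a direction $f$. The resolvent identity gives
\[
d\beta_\ka(f)=\langle f,m_\ka\rangle+\langle m_\ka,f\rangle\pm\langle m_\ka,(fC_+\bar q+qC_+\bar f)m_\ka\rangle,
\]
using the selfadjointness of $\Lc_q$ and $\partial_q \Lc_q(f)=\mp(fC_+\bar q+qC_+\bar f)$. The two resulting terms are
\[
\langle m_\ka, fC_+(\bar q m_\ka)\rangle=\langle f,\overline{m_\ka}\ldots\rangle
\qquad\text{and}\qquad
\langle m_\ka,qC_+(\bar f m_\ka)\rangle=\langle f, m_\ka C_-(q\overline{m_\ka})\rangle .
\]
Taking real parts and comparing with \eqref{1:51}, the two terms combine; the identity $\overline{C_+(\bar q m)}=C_-(q\bar m)$ makes them conjugate-equal. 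Projecting with $C_+$ then yields \eqref{eqn:beta_deriv}.

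\emph{Step 2: the symplectic gradient.} Since $J(q)$ is not explicit, we verify the candidate $X$ listed in \eqref{beta_symplectic_deriv} by checking $\Om(q)X=2\,\delta\beta_\ka/\delta\bar q$, which is equivalent by \eqref{J inv} and \eqref{symplectic gradient}. So we compute
\[
\Om(q)X=iX\pm 2C_+\bigl[q\,\tilde\dd^{-1}\Re(\bar q X)\bigr].
\]
The key input is an identity for $\Re(\bar q X)$. Using $(\Lc_q+\ka)m_\ka=q$, i.e.\ $-im_\ka'\mp qC_+(\bar q m_\ka)+\ka m_\ka=q$, one finds that $\Re(\bar q m_\ka)$ times $-2i$ is essentially a derivative. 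More precisely,
\[
\Re\bigl(\bar q(-2im_\ka)\bigr)=2\Im(\bar q m_\ka),
\]
and, multiplying the resolvent equation by $\bar m_\ka$ and taking imaginary parts, $\Im(\bar q m_\ka)$ should equal $-\tfrac12(|m_\ka|^2)'$ plus $\mp$ terms involving $C_\pm(q\bar m_\ka)$. The nonlinear part $2im_\ka C_+(q\bar m_\ka)$ is designed to cancel the latter, so that $\Re(\bar qX)$ becomes an exact derivative: a combination of $\dd_x|m_\ka|^2$ and $\dd_x|C_+(q\bar m_\ka)|^2$, the latter arising via $\Re(\bar h h')$ with $h=C_+(q\bar m_\ka)$. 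Then $\tilde\dd^{-1}$ inverts it exactly on $\R$. On $\T$, $\tilde\dd^{-1}$ also kills the mean, and this is the source of the extra terms $\mp\tfrac{i}{\pi}m_\ka\beta_\ka\mp\tfrac{i}{\pi}q\|m_\ka\|^2$: the constant $\langle q,m_\ka\rangle$-type means must be compensated, and the kernel $(1-y/\pi)$ produces the factor $1/\pi$. We then check that the remaining terms reassemble into $2m_\ka\pm2C_+[m_\ka C_-(q\bar m_\ka)]$, using $C_++C_-=1$ (respectively \eqref{Cid} on $\T$). \textbf{This bookkeeping, especially the torus zero-mode terms, is the main obstacle}, and is where we expect to spend effort.

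\emph{Step 3: the Lax pair.} With $\dd_tq=X$, we compute $\dd_t\Lc_q=\mp(XC_+\bar q+qC_+\bar X)$ and compare with $[\mathcal P^\ka_\beta,\Lc_q]$. The term $-2i(\Lc_q+\ka)^{-1}$ commutes with $\Lc_q$, as do the scalar multiples and the constant on $\T$. What remains to evaluate is $[m_\ka C_+\bar m_\ka,\Lc_q]$. Using $\Lc_q m_\ka=q-\ka m_\ka$ and its adjoint, this commutator reduces, via the commutator of $-i\dd_x$ with multiplication by $m_\ka$ and $\Lc_q$-intertwining identities of the type used in \cite{gerard2024calogero}, to the rank-type expression $qC_+\bar m_\ka-m_\ka C_+\bar q$ plus terms of the form $m_\ka C_+(q\bar m_\ka)C_+\bar q$. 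These match $\dd_t\Lc_q$ term by term. Since $q\mapsto\Lc_q$ is injective, the equivalence in \eqref{beta Lax pair} follows: the operator $\dd_t\Lc_q$ determines $\dd_tq$.

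\emph{Step 4: the Peter identity.} For \eqref{one true peter}, apply $\mathcal P^\ka_\beta$ to $q$. Using $(\Lc_q+\ka)^{-1}q=m_\ka$ and $C_+(\bar m_\ka q)=C_+(q\bar m_\ka)$, we get
\[
\mathcal P^\ka_\beta q=-2im_\ka\pm2im_\ka C_+(q\bar m_\ka)\mp\tfrac{i}{\pi}\beta_\ka m_\ka\mp\tfrac{i}{\pi}\|m_\ka\|^2q ,
\]
which is exactly $\na_\om\beta_\ka$ from \eqref{beta_symplectic_deriv}. The claim is therefore immediate.
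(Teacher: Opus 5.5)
Your outline follows the paper's route: the resolvent identity for $\delta\beta_\ka/\delta\bar q$, checking $\Om(q)X=2\,\delta\beta_\ka/\delta\bar q$ for the candidate $X$, a direct Lax computation, and $\mathcal P^\ka_\beta q=X$. Steps 1 and 4 are fine. The gap is Step 2, which is the substance of the proposition. It is not carried out, and the mechanism you anticipate is not the one at work. What is needed is the exact pointwise identity
\[
\Re(\bar q X)=\bigl(|m_\ka|^2\bigr)',
\]
with no $\dd_x|C_+(q\overline{m_\ka})|^2$ term.

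To get it, substitute $\bar q=\overline{(\Lc_q+\ka)m_\ka}=i\overline{m_\ka}'\mp\bar qC_-(q\overline{m_\ka})+\ka\overline{m_\ka}$ into $\Re(-2i\bar q m_\ka)$. This yields $(|m_\ka|^2)'\pm2\Re[i\bar q m_\ka C_-(q\overline{m_\ka})]$. By \eqref{Cid}, the second piece combines with the contribution $\pm2\Re[i\bar q m_\ka C_+(q\overline{m_\ka})]$ of the nonlinear term into $\pm2\Re[i|q|^2|m_\ka|^2]=0$. On $\T$ there is also a zero-mode term $\pm\tfrac{1}{\pi}\beta_\ka\Re[i\bar q m_\ka]$, since $(q\overline{m_\ka})^\wedge(0)=\beta_\ka/2\pi$; this is exactly what $\mp\tfrac{i}{\pi}m_\ka\beta_\ka$ cancels.

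The term $\mp\tfrac{i}{\pi}q\|m_\ka\|^2$ contributes nothing to $\Re(\bar qX)$. Its job is to cancel the constant in $\Theta(q)X=iq|m_\ka|^2-\tfrac{i}{2\pi}q\|m_\ka\|^2$, which comes from the mean removed by $\tilde\dd^{-1}$. The reassembly then uses \eqref{Cid} once more, $C_+[m_\ka q\overline{m_\ka}]=m_\ka C_+(q\overline{m_\ka})+C_+[m_\ka C_-(q\overline{m_\ka})]-\tfrac{\beta_\ka}{2\pi}m_\ka$, which uses the $m_\ka\beta_\ka$ term a second time. So the two torus terms have different origins, contrary to your description.

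In Step 3, your justification of the reverse implication in \eqref{beta Lax pair} is wrong. The map $q\mapsto\Lc_q$ is not injective, since $\Lc_{e^{i\theta}q}=\Lc_q$ for constant real $\theta$; correspondingly, $Y\mapsto YC_+\bar q+qC_+\bar Y$ annihilates $Y=iq$. Under $q\mapsto e^{i\theta}q$ we have $m_\ka\mapsto e^{i\theta}m_\ka$, while $\beta_\ka$, $\|m_\ka\|_2$, and the operator $m_\ka C_+\overline{m_\ka}$ are unchanged. Hence $\mathcal P^\ka_\beta$ is phase invariant and $\na_\om\beta_\ka$ is phase equivariant.

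Consequently, if $q(t)$ solves \eqref{beta flow}, then $e^{i\theta(t)}q(t)$ satisfies the Lax equation for any real $\theta(t)$ independent of $x$, but it solves \eqref{beta flow} only when $\theta$ is constant. The direct computation (yours, or the paper's ``lengthy but straightforward computation'') therefore gives only the forward implication. The Lax equation determines $\dd_tq$ only modulo $i\R q$; this is a caveat about the statement as written, not just about your proof.

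Two small slips. In Step 1 the first term is $\langle m_\ka C_-(q\overline{m_\ka}),f\rangle$, the conjugate of the second. The rank-type part of $[m_\ka C_+\overline{m_\ka},\Lc_q]$ is $m_\ka C_+\bar q-qC_+\overline{m_\ka}$, not its negative.
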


\begin{proof}
The formula \eqref{eqn:beta_deriv} is obtained via a straightforward calculation using the resolvent identity and the definition \eqref{Wirtinger} of the Wirtinger derivative.

We now turn to \eqref{beta_symplectic_deriv}. We present the details in the case of the circle, which is more computationally involved than the line case.  Indeed, the sole distinction between the two geometries arises from \eqref{Cid}.

In view of \eqref{J inv}, \eqref{symplectic gradient}, and \eqref{eqn:beta_deriv}, it suffices to verify that
$$
\Om(q)\bigl[-2im_\ka\pm 2im_\ka C_+(q\overline m_\ka)\mp\tfrac{i}{\pi}m_\ka\beta_\ka\mp\tfrac{i}{\pi}q\|m_\ka\|^2_2\bigr]=2 \tfrac{\dl \beta_\ka}{\dl \bar{q}}.
$$
Using the identity
\begin{equation}\label{eqn:q_eqn}
q=(\Lc_q+\ka)m_\ka=-im_\ka'\mp q C_+(\bar{q}m_\ka)+\ka m_\ka
\end{equation}
and \eqref{Cid}, we may write
\begin{align*}
\Re\, &\bigl[\bar{q}(-2im_\ka\pm 2im_\ka C_+(q\overline m_\ka)\mp\tfrac{i}{\pi}m_\ka\beta_\ka\mp\tfrac{i}{\pi}q\|m_\ka\|_2^2)\bigr]\\
&=2\Re\,\bigl[-i\bigl(i\overline m_\ka'\mp\bar{q}C_-(q\overline m_\ka)+\ka\overline m_\ka\bigr)m_\ka\pm i\bar{q}m_\ka C_+(q\overline m_\ka)\mp \tfrac{i}{2\pi}\bar{q}m_\ka\beta_\ka\bigr]\\
&=2\Re\, \bigl[m_\ka\overline m_\ka'\pm i\bar{q}m_\ka C_-(q\overline m_\ka)\pm i\bar{q}m_\ka C_+(q\overline m_\ka)\mp \tfrac{i}{2\pi}\bar{q}m_\ka\beta_\ka\bigr]\\
&=(|m_\ka|^2)' \pm 2\Re\, \bigl[i\bar{q}m_\ka\bigl[q\overline m_\ka+(q\overline m_\ka)^{\wedge}(0)\bigr]-\tfrac{i}{2\pi}\bar{q}m_\ka\beta_\ka\bigr]=(|m_\ka|^2)'
\end{align*}
since $(q\overline m_\ka)^{\wedge}(0)=\frac1{2\pi}\beta_\ka$. It follows that
\begin{align*}
\Theta(q)\bigl[-2im_\ka&\pm 2im_\ka C_+(q\overline m_\ka)\mp\tfrac{i}{\pi} m_\ka\beta_\ka\mp\tfrac{i}{\pi}q\|m_\ka\|_2^2\bigr]\\
&=iq\tilde{\dd}^{-1}(|m_\ka|^2)'=iq\bigl[|m_\ka|^2-(|m_\ka|^2)^{\wedge}(0)\bigr]=iq|m_\ka|^2-\tfrac{i}{2\pi}q\|m_\ka\|^2_2.
\end{align*}
Therefore, using \eqref{Cid} again we obtain
\begin{align*}
\Om (q)&\bigl[-2im_\ka\pm 2im_\ka C_+(q\overline m_\ka)\mp\tfrac{i}{\pi}m_\ka\beta_\ka\mp\tfrac{i}{\pi}q\|m_\ka\|_2^2\bigr]\\
&=2m_\ka\mp 2m_\ka C_+(q\overline m_\ka)\pm \tfrac{1}{\pi} m_\ka\beta_\ka\pm \tfrac{1}{\pi}q\|m_\ka\|_2^2\mp2i C_+\bigl(iq|m_\ka|^2-\tfrac{i}{2\pi}q\|m_\ka\|^2_2\bigr)\\
&=2m_\ka\mp 2m_\ka C_+(q\overline m_\ka)\pm \tfrac{1}{\pi} m_\ka\beta_\ka\pm \tfrac{1}{\pi}q\|m_\ka\|^2_2 \\
&\quad \pm 2 C_+\bigl[m_\ka\bigl( C_+(q\overline m_\ka)+C_-(q\overline m_\ka)-(q\overline m_\ka)^{\wedge}(0)\bigr)\bigr] \mp \tfrac{1}{\pi}q\|m_\ka\|^2_2\\
&=2m_\ka\pm 2 C_+\bigl[m_\ka C_-(q\overline m_\ka)\bigr]=2 \tfrac{\dl \beta_\ka}{\dl \bar{q}},
\end{align*}
which proves \eqref{beta_symplectic_deriv}.

The claim \eqref{beta Lax pair} also follows from a lengthy but straightforward computation employing the identity \eqref{eqn:q_eqn}.  Note that this computation is simplified by the fact that $\Lc_q$ commutes with its resolvent and the constants $\beta_\kappa$ and $\|m_\ka\|^2_2$.  Finally, \eqref{one true peter} follows immediately from \eqref{beta_symplectic_deriv} and \eqref{eqn:Peter_op_ka}.
\end{proof}

To continue, we recall that $\beta(\ka,q)$ serves as a generating function for the infinite sequence of conservation laws $\mathcal{E}_n(q)=\lan q,\Lc_q^n q\ran$ in the sense that
$$
\beta(\ka,q)=\sum_{n\geq0}(-1)^n\ka^{-(n+1)}\mathcal{E}_n(q).
$$
In view of Proposition~\ref{lem:beta_results}, it is now a simple matter to derive the Hamiltonian flows corresponding to each $\mathcal{E}_n(q)$, as well as their attendant Lax pair formulations.

\begin{proposition}\label{P:En flows}
Throughout $B_{M_*}$ we have 
\begin{align}\label{E0 symplectic deriv}
\na_\om\mathcal{E}_0(q)= -2iq,
\end{align}
while for each $n\geq1$, we have
\begin{equation}\label{En symplectic deriv}
\begin{aligned}
\na_\om\mathcal{E}_n(q)&=-2i\Lc_q^nq\mp2i\!\!\sum_{j+\ell=n-1}\!\!(\Lc_q^jq)C_+(q\overline{\Lc_q^\ell q}) \qquad\qquad\qquad\qquad\qquad\qquad\quad \text{on $\R$,}\\
\na_\om\mathcal{E}_n(q)&=-2i\Lc_q^nq\mp2i\sum_{j+\ell=n-1}(\Lc_q^j q)\bigl[C_+(q\overline{\Lc_q^\ell q})-\tfrac1{2\pi}\mathcal{E}_\ell(q)\bigr]\pm\tfrac{i}{\pi}nq\E_{n-1}(q)\quad \text{on $\T$,}
\end{aligned}
\end{equation}
where $j,l\geq 0$. Moreover, these flows admit the Lax pair representation
\begin{equation}
\dd_t q=\na_\om\mathcal{E}_n(q) \qquad \iff \qquad \dd_t \Lc_q = \bigl[\mathcal{P}_n,\Lc_q\bigr]\label{En Lax pair}
\end{equation}
where $\mathcal{P}_0=-2i$ and for $n\geq 1$,
\begin{equation}\label{eqn:Peter_op_n}
\begin{aligned}
\mathcal{P}_n&:=-2i\Lc_q^n\mp 2i\!\!\!\!\!\sum_{j+\ell=n-1}(\Lc_q^jq) C_+\overline{\Lc_q^\ell q} \qquad\qquad\qquad\qquad\qquad\qquad \quad\text{on $\R$,}\\
\mathcal{P}_n&:=-2i\Lc_q^n\mp 2i\!\!\!\!\!\sum_{j+\ell=n-1}\bigl[(\Lc_q^jq) C_+\overline{\Lc_q^\ell q}-\tfrac{1}{2\pi}\E_\ell(q)\Lc_q^j\bigr]\pm\tfrac{i}{\pi}n\E_{n-1}(q)\quad \text{on $\T$.}
\end{aligned}
\end{equation}
Finally, for each $n\geq 0$, $\mathcal{P}_n$ enjoys the following special property:
\begin{equation}
\dd_t q=\na_\om\mathcal{E}_n(q) \qquad \iff \qquad \dd_t q=\mathcal{P}_n q.\label{one true peter for En}
\end{equation}
\end{proposition}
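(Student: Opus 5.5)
The plan is to obtain everything from Proposition~\ref{lem:beta_results} by expanding in powers of $\kappa^{-1}$. For $q$ sufficiently regular (say $q\in H^\infty_+\cap B_{M_*}$, which keeps every $\Lc_q^n q$ in $L^2_+$), we have the Neumann expansion
\[
m_\ka=(\Lc_q+\ka)^{-1}q=\sum_{j\ge0}(-1)^j\ka^{-(j+1)}\Lc_q^jq
\]
as an asymptotic series in $\ka^{-1}$, valid to any finite order. Together with $\beta_\ka=\sum_n(-1)^n\ka^{-(n+1)}\E_n$, this lets us identify $\na_\om\E_n$ with $(-1)^n$ times the coefficient of $\ka^{-(n+1)}$ in $\na_\om\beta_\ka$. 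This identification is legitimate because $\E_n\mapsto\na_\om\E_n=2J(q)\tfrac{\dl\E_n}{\dl\bar q}$ is linear and $J(q)$ does not depend on $\ka$.

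We then read off the coefficients from \eqref{beta_symplectic_deriv}.
\begin{itemize}
\item The term $-2im_\ka$ contributes $-2i\Lc_q^nq$.
\item The bilinear term $\pm2im_\ka C_+(q\overline m_\ka)$ has $\ka^{-(n+1)}$ coefficient $\pm2i\sum_{j+\ell=n-1}(-1)^{j+\ell}(\Lc_q^jq)C_+(q\overline{\Lc_q^\ell q})$. The factor $\ka^{-(j+1)-(\ell+1)}$ forces $j+\ell=n-1$, and multiplying by $(-1)^n$ produces the sign $\mp$ displayed in \eqref{En symplectic deriv}.
\item On $\T$, the term $\mp\tfrac{i}{\pi}m_\ka\beta_\ka$ yields $\mp\tfrac{i}{\pi}\sum_{j+\ell=n-1}(\Lc_q^jq)\E_\ell$, which after the sign change is the $-\tfrac1{2\pi}\E_\ell$ correction.
\item The term $\mp\tfrac{i}{\pi}q\|m_\ka\|_2^2$ requires $\|m_\ka\|^2=\sum_{j,\ell}(-1)^{j+\ell}\ka^{-(j+\ell+2)}\langle\Lc_q^jq,\Lc_q^\ell q\rangle$. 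By self-adjointness, $\langle\Lc_q^jq,\Lc_q^\ell q\rangle=\E_{j+\ell}=\E_{n-1}$, and there are exactly $n$ pairs with $j+\ell=n-1$, giving $\pm\tfrac{i}{\pi}nq\E_{n-1}$.
\end{itemize}
For $n=0$ only the first term survives, which gives \eqref{E0 symplectic deriv}. Alternatively, one checks directly that $\Om(q)(-2iq)=2q=2\tfrac{\dl \E_0}{\dl\bar q}$, using $\Theta(q)(-2iq)=0$ since $\Re(\bar q\cdot(-2iq))=0$.

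The Lax operators $\mathcal P_n$ in \eqref{eqn:Peter_op_n} are obtained in the same way, by expanding $\mathcal P^\ka_\beta$ from \eqref{eqn:Peter_op_ka} using $(\Lc_q+\ka)^{-1}=\sum_j(-1)^j\ka^{-(j+1)}\Lc_q^j$ and the analogous expansion of $m_\ka C_+\overline m_\ka$. The property \eqref{one true peter for En} then follows by applying $\mathcal P_n$ to $q$, or equivalently from \eqref{one true peter} by matching coefficients. For \eqref{En Lax pair}, we match the coefficients of $\ka^{-(n+1)}$ on both sides of $\dd_t\Lc_q=[\mathcal P^\ka_\beta,\Lc_q]$; more precisely, the operator identity $\delta\Lc_q=[\mathcal P^\ka_\beta,\Lc_q]$ with $\delta\Lc_q=\mp(\na_\om\beta_\ka)C_+\bar q\mp qC_+\overline{\na_\om\beta_\ka}$ holds for every $\ka$, and each side is expanded in $\ka^{-1}$.

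The main obstacle is justifying the coefficient extraction. The series in $\ka^{-1}$ does not converge, because $\Lc_q$ is unbounded, so we will work with finite expansions with remainders: $(\Lc_q+\ka)^{-1}=\sum_{j<N}(-1)^j\ka^{-(j+1)}\Lc_q^j+(-1)^N\ka^{-N}(\Lc_q+\ka)^{-1}\Lc_q^N$. For smooth $q$, the remainder terms are $O(\ka^{-N-1})$ in $L^2$, using the bounded invertibility from Theorem~\ref{T:equiv}. Uniqueness of asymptotic coefficients then gives the identities for smooth $q$, and for $\na_\om\E_n$ we may use $\tfrac{\dl\E_n}{\dl \bar q}$ computed directly and compare with $J(q)$. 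Both sides of \eqref{En symplectic deriv} are defined only on sufficiently regular $q$ in any case, so no density argument beyond this is needed.
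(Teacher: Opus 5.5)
Your proposal is correct and follows the paper's proof. Like the paper, you substitute the expansion of $m_\ka$ (and of $(\Lc_q+\ka)^{-1}$) in powers of $\ka^{-1}$ into \eqref{beta_symplectic_deriv} and \eqref{eqn:Peter_op_ka}, then match coefficients against $\beta_\ka=\sum_n(-1)^n\ka^{-(n+1)}\E_n$, using self-adjointness to get $\langle\Lc_q^jq,\Lc_q^\ell q\rangle=\E_{n-1}$. Your finite expansions with $O(\ka^{-N-1})$ remainders for smooth $q$ make rigorous the coefficient matching, which the paper performs only formally.
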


\begin{proof}
Once again, we present the details in the case of the circle. Expanding
\begin{align}\label{m expansion}
m_\ka =(\Lc_q+\ka)^{-1}q=\sum_{n\geq0}(-1)^n\ka^{-(n+1)}\Lc_q^nq
\end{align}
and substituting this into \eqref{beta_symplectic_deriv}, we find
\begin{align*}
\na_\om\beta_\ka
&=-2im_\ka\pm 2im_\ka C_+(q\overline{m}_\ka)\mp\tfrac{i}{\pi} m_\ka \beta_\ka\mp\tfrac{i}{\pi}q\| m_\ka\|_2^2\\
&=\sum_{n\geq0}2i\tfrac{(-1)^{n+1}}{\ka^{n+1}}\Lc_q^nq\pm2i\sum_{n,m\geq 0}\tfrac{(-1)^{n+m}}{\ka^{n+m+2}}(\Lc_q^nq)\bigl[C_+(q\overline{\Lc_q^mq})-\tfrac{1}{2\pi}\mathcal{E}_m(q)\bigr]\\
&\quad\pm\tfrac{i}{\pi} q\sum_{n,m\geq 0}\tfrac{(-1)^{n+m+1}}{\ka^{n+m+2}}\lan \Lc_q^nq,\Lc_q^mq\ran\\
&=-\tfrac{2i}{\ka} q - 2i\sum_{n\geq1}\tfrac{(-1)^{n}}{\ka^{n+1}}\Bigl\{\Lc_q^nq\pm\sum_{j+\ell=n-1}\!\!(\Lc_q^jq)\bigl[C_+(q\overline{\Lc_q^\ell q})-\tfrac{1}{2\pi}\mathcal{E}_\ell(q)\bigr]\\
&\qquad\qquad\qquad\qquad \qquad\qquad\qquad \qquad \qquad - \tfrac{1}{2\pi}q\,\lan \Lc_q^jq,\Lc_q^\ell q\ran\Bigr\}.
\end{align*}
Matching the coefficients in the expansion above with those in the expansion
\begin{equation*}
\quad\na_\om\beta_\ka=\sum_{n\geq0}\tfrac{(-1)^{n}}{\ka^{n+1}}\na_\om\mathcal{E}_n
\end{equation*}
and using the self-adjointness of $\Lc_q$, we arrive at \eqref{E0 symplectic deriv} and \eqref{En symplectic deriv}.

An analogous argument using \eqref{eqn:Peter_op_ka} and \eqref{one true peter} together with \eqref{m expansion} yields the Lax pair representation \eqref{En Lax pair} with the Peter operators defined in \eqref{eqn:Peter_op_n}. Finally, \eqref{one true peter for En} follows immediately from \eqref{En symplectic deriv} and \eqref{eqn:Peter_op_n}.
\end{proof}

Using Proposition~\ref{P:En flows} we see that  
\begin{align*}
\na_\om\mathcal{E}_0(q)= -2iq, \quad \na_\om\mathcal{E}_1(q)=-2q', \qtq{and} \na_\om\mathcal{E}_2(q)=2iq''\pm4qC_+(|q|^2)'
	\quad\text{on $\R$.}
\end{align*}
By contrast, the analogous formulas take the form
\begin{align}
\na_\om\mathcal{E}_0(q)&= -2iq,\quad\na_\om\mathcal{E}_1(q)=-2q'\pm\tfrac{2i}{\pi}q\mathcal{E}_0(q), \quad\text{and}\label{eqn:E0 and E1}\\
\na_\om\mathcal{E}_2(q)&=2iq''\pm4qC_+(|q|^2)'\pm \tfrac1{\pi}q'\mathcal{E}_0(q)\pm\tfrac{3i}{\pi}q\mathcal{E}_1(q) \quad\text{on $\T$.} \label{eqn: E2}
\end{align}

This motivates the following definitions for the mass, momentum, and Hamiltonian, respectively:
\begin{equation}\label{Hamiltonians on R}
M(q)=\mathcal{E}_0(q), \quad P(q)=-\tfrac12\mathcal{E}_1(q), \qtq{and}  H(q)=\tfrac12\mathcal{E}_2(q) \qtq{on} \R,
\end{equation}
while
\begin{equation}\label{Hamiltonians on T}
\begin{alignedat}{2}
M(q)&=\mathcal{E}_0(q), \qquad  P(q)=-\tfrac12\mathcal{E}_1(q)\mp\tfrac1{4\pi}\mathcal{E}_0(q)^2, \quad&&\text{and}\\
H(q)&=\tfrac12\mathcal{E}_2(q)\pm\tfrac{3}{4\pi}\mathcal{E}_0(q)\mathcal{E}_1(q)+\tfrac{1}{4\pi^2}\mathcal{E}_0(q)^3 &&\text{on}\quad \T. 
\end{alignedat}
\end{equation}

With these definitions, $M(q)$ generates phase rotation (at double speed) in both geometries.  Similarly, $P(q)$ was chosen so that it generates spatial translation in both geometries, which is to say $\na_\om P(q) = -q'$. Using \eqref{eqn:E0 and E1} and \eqref{eqn: E2} it is easy to  verify that
\begin{equation}\label{5:09}
\begin{aligned}
\na_\om H(q) &= iq''\pm 2qC_+(|q|^2)'\qquad \qquad \quad \quad\text{on $\R$}\\
\na_\om H(q) &= iq''\pm 2qC_+(|q|^2)'\mp\tfrac{1}{\pi}\mathcal E_0(q) q' \quad\text{on $\T$}.
\end{aligned}
\end{equation}
Thus, in the line setting, the Hamiltonian $H(q)$ defined in \eqref{Hamiltonians on R} generates the flow \eqref{CCMR}, while in the circle setting, the Hamiltonian $H(q)$ defined in \eqref{Hamiltonians on T} generates the flow \eqref{CCMT}.

Note that there is some freedom in the choice of the Hamiltonian $H(q)$ on $\T$; we opted for the version above in order to obtain an equation that (i) is as close as possible to \eqref{CCMR} and (ii) is well-posed for data in $L^2_+$.

It remains to verify the equations \eqref{eqn:H(q) R} and \eqref{eqn:H(q) T}. As already recorded in \cite{gerard2024calogero}, on the real line we have
$$
P(q) =-\tfrac12\|q\|_{\dot H^{1/2}}^2\pm \tfrac14\|q\|_{L^4}^4 \qtq{and} H(q)=\tfrac12\|q'\mp i qC_+(|q|^2)\|_{L^2}^2.
$$
This is also easily derived from the computations in the proof of Lemma~\ref{L:P and H}, whose role is to obtain analogous expressions for $P(q)$ and $H(q)$ in the case of the circle:

\begin{lemma}\label{L:P and H}
On $\T$ we have 
\begin{align*}
P(q)&=-\tfrac{1}{2}\|q\|_{\dot{H}^{1/2}}^2\pm \tfrac14\|q\|_{L^4}^4\mp\tfrac1{8\pi}\|q\|_{L^2}^4,\\
H(q)&=\tfrac12\|q'\mp i qC_+(|q|^2)\|_{L^2}^2\pm \tfrac{3}{4\pi} \|q\|_{L^2}^2\|q\|_{\dot{H}^{1/2}}^2-\tfrac{3}{8\pi}\|q\|_{L^2}^2\|q\|_{L^4}^4+\tfrac{1}{16\pi^2}\|q\|_{L^2}^6\notag\\
&= \tfrac12\|q\|_{\dot{H}^{1}}^2\mp\tfrac12\| C_+(|q|^2)\|_{\dot{H}^{1/2}}^2\mp\tfrac14\|q^2\|_{\dot{H}^{1/2}}^2\pm \tfrac1{2\pi}\|q\|_{L^2}^2\|q\|_{\dot{H}^{1/2}}^2\nonumber\\
&\quad+\tfrac{1}{6}\|q\|_{L^6}^6-\tfrac1{4\pi}\|q\|_{L^4}^4\|q\|_{L^2}^2+\tfrac{1}{12\pi^2}\|q\|_{L^2}^6.
\end{align*}
\end{lemma}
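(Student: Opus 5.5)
The plan is to compute $\mathcal E_1$ and $\mathcal E_2$ explicitly on $\T$ in terms of standard norms, and then substitute into the definitions \eqref{Hamiltonians on T}. Throughout I work with $q\in H^\infty_+$ and extend by continuity at the end. Write $\Lc_q q=-iq'\mp qC_+(|q|^2)$. The first step is
\begin{equation*}
\mathcal E_1(q)=\langle q,-iq'\rangle\mp\langle |q|^2,C_+(|q|^2)\rangle .
\end{equation*}
The first term is $\|q\|_{\dot H^{1/2}}^2$, because $q\in L^2_+$ has only nonnegative frequencies. For the second, set $\rho=|q|^2$, which is real. Then $\langle\rho,C_+\rho\rangle=2\pi\sum_{n\ge0}|\widehat\rho(n)|^2$. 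By the symmetry $\widehat\rho(-n)=\overline{\widehat\rho(n)}$ this equals $\tfrac12\|\rho\|_{L^2}^2+\pi|\widehat\rho(0)|^2=\tfrac12\|q\|_{L^4}^4+\tfrac1{4\pi}M^2$, using $\widehat\rho(0)=M/2\pi$. Hence
\begin{equation*}
\mathcal E_1=\|q\|_{\dot H^{1/2}}^2\mp\tfrac12\|q\|_{L^4}^4\mp\tfrac1{4\pi}M^2 .
\end{equation*}
Inserting this into $P=-\tfrac12\mathcal E_1\mp\tfrac1{4\pi}M^2$ gives the claimed formula for $P$, with total coefficient $\pm\tfrac1{8\pi}\mp\tfrac1{4\pi}=\mp\tfrac1{8\pi}$ on $\|q\|_{L^2}^4$.

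For $H$, self-adjointness of $\Lc_q$ gives $\mathcal E_2=\|\Lc_q q\|^2=\|q'\mp iqC_+(|q|^2)\|^2$. Substituting this and the formula for $\mathcal E_1$ into
\begin{equation*}
H=\tfrac12\mathcal E_2\pm\tfrac3{4\pi}M\mathcal E_1+\tfrac1{4\pi^2}M^3
\end{equation*}
produces the first expression. The cubic-in-$M$ coefficient comes out as $-\tfrac3{16\pi^2}+\tfrac1{4\pi^2}=\tfrac1{16\pi^2}$, and the mixed terms are $\pm\tfrac3{4\pi}M\|q\|_{\dot H^{1/2}}^2-\tfrac3{8\pi}M\|q\|_{L^4}^4$. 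These agree with the statement.

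The second expression requires expanding
\begin{equation*}
\|q'\mp iqC_+\rho\|^2=\|q\|_{\dot H^1}^2+\|qC_+\rho\|^2\mp2\Re\langle q',iqC_+\rho\rangle .
\end{equation*}
For the cross term, I would write $\langle q',iqC_+\rho\rangle=i\int \overline{q'}\,q\,C_+\rho$. Since $2\Re(\overline{q'}q)=\rho'$, splitting $\overline{q'}q$ into real and imaginary parts relates it to $\int\rho' C_+\rho$, which is expressed through $\dot H^{1/2}$ norms of $C_+\rho$, and to $\int(\overline{q}q'-q\overline{q'})C_+\rho$. For $\|qC_+\rho\|^2=\int\rho|C_+\rho|^2$, I would use $C_+\rho=\tfrac12(\rho+i\mathcal H\rho)+\tfrac12\widehat\rho(0)$, where $\mathcal H$ is the Hilbert transform, together with the Cotlar-type identity relating $\int\rho(\mathcal H\rho)^2$ to $\int\rho^3$ and commutator terms. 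This should produce $\tfrac16\|q\|_{L^6}^6$ plus terms that combine with the cross term into $\|q^2\|_{\dot H^{1/2}}^2$. The cleanest route is probably Fourier: write everything as sums over frequencies $\ge0$ of $\widehat q$ and compare with the nonnegative-frequency expansions of $\|q^2\|_{\dot H^{1/2}}^2$, $\|C_+\rho\|_{\dot H^{1/2}}^2$, and $\int\rho^3$.

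I expect this identity — matching the quartic and sextic terms, including all the zero-mode corrections specific to $\T$ that produce the $\tfrac1{2\pi}$, $\tfrac1{4\pi}$ and $\tfrac1{12\pi^2}$ coefficients — to be the main obstacle. To control these corrections, I would first derive the line identity, where there are no zero-mode terms, and then track the corrections from \eqref{Cid} separately; these should contribute exactly the differences between the two displayed forms, namely $-\tfrac1{4\pi}M\|q\|_{\dot H^{1/2}}^2$, $+\tfrac1{8\pi}M\|q\|_{L^4}^4$ and $+\tfrac1{48\pi^2}M^3$.
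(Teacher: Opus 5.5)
Your treatment of $P$ and of the first formula for $H$ is correct and is essentially the paper's argument. Evaluating $\langle\rho,C_+\rho\rangle$ via $\widehat\rho(-n)=\overline{\widehat\rho(n)}$ is just the Fourier form of the paper's use of \eqref{Cid}. The gap is the second formula for $H$, i.e.\ the expansion of $\mathcal E_2$, which you never carry out. The zero-mode coefficients you list come from subtracting the two target formulas, so they check consistency but prove nothing; the first of them should also read $\mp\tfrac1{4\pi}M\|q\|_{\dot H^{1/2}}^2$. More seriously, your plan puts the terms in the wrong place. The sextic term $\|qC_+\rho\|^2$ has no $\dot H^{1/2}$-type content, so nothing from it ``combines with the cross term into $\|q^2\|_{\dot H^{1/2}}^2$''. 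The missing idea there is purely algebraic:
\begin{itemize}
\item since $\rho$ is real, $\|qC_+\rho\|^2=\int\rho\,C_+\rho\,C_-\rho$;
\item substitute $\rho=C_+\rho+C_-\rho-\tfrac{M}{2\pi}$ from \eqref{Cid} and use $3ab(a+b)=(a+b)^3-a^3-b^3$;
\item by Fourier support, $\int(C_\pm\rho)^3=2\pi(\tfrac M{2\pi})^3$.
\end{itemize}
Together with $\|C_+\rho\|_{L^2}^2=\tfrac12\|q\|_{L^4}^4+\tfrac1{4\pi}M^2$, this yields \eqref{eqn:L6_expression}, namely $\tfrac13\|q\|_{L^6}^6+\tfrac1{4\pi}M\|q\|_{L^4}^4+\tfrac1{24\pi^2}M^3$. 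Note the coefficient $\tfrac13$ at the level of $\mathcal E_2$, not $\tfrac16$. Your Hilbert-transform route is equivalent, but it must actually be executed with the torus mean corrections. Cotlar's identity is exact: on the line it gives $\int\rho(\mathcal{H}\rho)^2=\tfrac13\int\rho^3$, with no commutator remainder.

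All the $\dot H^{1/2}$ terms come from the cross term, and there you leave the essential step open. Write $\overline{q'}q=\tfrac12\rho'+i\Im(\overline{q'}q)$. The $\tfrac12\rho'$ part contributes $\tfrac12\|C_+\rho\|_{\dot H^{1/2}}^2$ to $\Re\langle q',iqC_+\rho\rangle$. For the remainder you need the paper's averaging trick: $\Im(\overline{q'}q)$ is real and $C_-\rho=\overline{C_+\rho}$, so by \eqref{Cid}
$$\Re\int\Im(\overline{q'}q)\,C_+\rho=\tfrac12\int\Im(\overline{q'}q)\bigl(\rho+\tfrac M{2\pi}\bigr).$$
Then $\int\Im(\overline{q'}q)\,|q|^2=\tfrac12\Im\int\overline{(q^2)'}\,q^2=-\tfrac12\|q^2\|_{\dot H^{1/2}}^2$; this is where $q^2\in L^2_+$ enters. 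Also $\int\Im(\overline{q'}q)=-\|q\|_{\dot H^{1/2}}^2$. Altogether
$$\mp2\Re\langle q',iqC_+\rho\rangle=\mp\bigl[\|C_+\rho\|_{\dot H^{1/2}}^2+\tfrac12\|q^2\|_{\dot H^{1/2}}^2+\tfrac1{2\pi}M\|q\|_{\dot H^{1/2}}^2\bigr],$$
which is equivalent to \eqref{7:49}. Deriving the line identity first does not help, because it requires exactly these two computations. Only with both in hand does the second formula for $H$ follow. As written, your proposal establishes the first two identities of the lemma but not the third.
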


\begin{proof}
To establish the expressions for $P(q)$ and $H(q)$ we first calculate
\begin{align*}
\mathcal{E}_1(q)&=\lan q,\Lc_qq\ran=\|q\|_{\dot{H}^{1/2}}^2\mp\lan |q|^2, C_+(|q|^2)\ran.
\end{align*}
Employing \eqref{Cid}, we may write
\begin{align*}
\lan |q|^2, C_+(|q|^2)\ran&=\lan |q|^2,|q|^2-C_-(|q|^2)+\tfrac1{2\pi}\mathcal{E}_0(q)\ran\\
&=\|q\|_{L^4}^4-\lan |q|^2,C_-(|q|^2)\ran+\tfrac1{2\pi}\mathcal{E}_0^2(q).
\end{align*}
As $\lan |q|^2,C_-(|q|^2)\ran=\overline{\lan |q|^2,C_+(|q|^2)\ran}=\lan |q|^2,C_+(|q|^2)\ran$, we find
\begin{equation}\label{eqn:norm_Cp_q^2}
\lan C_+ (|q|^2),C_+(|q|^2)\ran=\lan |q|^2,C_+(|q|^2)\ran=\tfrac12\|q\|_{L^4}^4+\tfrac1{4\pi}\|q\|_{L^2}^4.
\end{equation}
Thus,
\begin{align}\label{7:50}
\mathcal{E}_1(q)=\|q\|_{\dot{H}^{1/2}}^2\mp\tfrac12\|q\|_{L^4}^4\mp\tfrac1{4\pi}\|q\|_{L^2}^4
\end{align}
and the claimed expression for $P(q)$ follows.

As 
\begin{align}\label{5:08}
\mathcal{E}_2(q)&=\lan \Lc_qq,\Lc_qq\ran=\|q'\mp i qC_+(|q|^2)\|_{L^2}^2,
\end{align}
employing \eqref{7:50} we obtain
\begin{align*}
H(q)&=\tfrac12\mathcal{E}_2(q)\pm\tfrac{3}{4\pi}\mathcal{E}_0(q)\mathcal{E}_1(q)+\tfrac{1}{4\pi^2}\mathcal{E}_0(q)^3\\
&=\tfrac12\|q'\mp i qC_+(|q|^2)\|_{L^2}^2\pm \tfrac{3}{4\pi} \|q\|_{L^2}^2\|q\|_{\dot{H}^{1/2}}^2-\tfrac{3}{8\pi}\|q\|_{L^2}^2\|q\|_{L^4}^4+\tfrac{1}{16\pi^2}\|q\|_{L^2}^6.
\end{align*}

To obtain the detailed representation of $H(q)$ in the statement of this lemma, we have to further expand $\mathcal E_2(q)$.  We have
\begin{align}\label{eqn:E_2_expansion}
\mathcal{E}_2(q)&=\lan \Lc_qq,\Lc_qq\ran=\|q'\|_{L^2}^2\pm 2\Re\,\lan iq',q C_+(|q|^2)\ran+\|q C_+(|q|^2)\|_{L^2}^2.
\end{align}
First we study
\begin{align*}
2\Re\,\lan iq',q C_+(|q|^2)\ran&=2\Re\,\lan i\bar{q}q', C_+(|q|^2)\ran\\
&=2\Re\,\lan i\bigl[(|q|^2)'-q\overline{q'}\,\bigr], C_+(|q|^2)\ran.
\end{align*}
Averaging the two lines above, we obtain
\begin{align}\label{7:46}
2\Re\,\lan iq',q C_+(|q|^2)\ran&=\Re\,\lan i(|q|^2)', C_+(|q|^2)\ran+\Re\,\lan i(\bar{q}q'-q\overline{q'}), C_+(|q|^2)\ran.
\end{align}
The first term is
\begin{align}\label{7:47}
\Re\,\lan i(|q|^2)', C_+(|q|^2)\ran=-\| C_+(|q|^2)\|_{\dot{H}^{1/2}}^2.
\end{align}
For the second term, we use that $ C_+(|q|^2)=\overline{C_-(|q|^2)}$ to write
\begin{align*}
\Re\,\lan i(\bar{q}q'-q\overline{q'}), C_+(|q|^2)\ran
&=-2\Re\,\lan\Im(\bar{q}q'), C_+(|q|^2)\ran\\
&=-2\Re\,\lan\Im(\bar{q}q'),C_-(|q|^2)\ran.
\end{align*}
Averaging again the last two lines and invoking \eqref{Cid} we obtain
\begin{align}\label{7:48}
\Re\, \lan i(\bar{q}q'-q\overline{q'}), C_+(|q|^2)\ran\notag
&=-\Re\,\lan \Im(\bar{q}q'),|q|^2+\tfrac1{2\pi}\mathcal{E}_0(q)\ran\notag\\
&=\Im\,\lan\bar{q}q',|q|^2\ran-\tfrac1{2\pi}\mathcal{E}_0(q)\Im\lan q,q'\ran\notag\\
&=\tfrac12 \Im\,\lan (q^2)',q^2\ran-\tfrac1{2\pi}\mathcal{E}_0(q)\|q\|_{\dot H^{1/2}}^2\notag\\
&=-\tfrac12\|q^2\|_{\dot{H}^{1/2}}^2-\tfrac1{2\pi}\|q\|_{L^2}^2\|q\|_{\dot{H}^{1/2}}^2.
\end{align}
Collecting \eqref{7:46}, \eqref{7:47}, and \eqref{7:48}, we obtain
\begin{align}\label{7:49}
2\Re\,\lan iq',q C_+(|q|^2)\ran=-\| C_+(|q|^2)\|_{\dot{H}^{1/2}}^2-\tfrac12\|q^2\|_{\dot{H}^{1/2}}^2-\tfrac1{2\pi}\|q\|_{L^2}^2\|q\|_{\dot{H}^{1/2}}^2.
\end{align}

Next, employing \eqref{Cid} we compute
\begin{align*}
\|q C_+(|q|^2)\|^2_{L^2}
&=\int_\T |q|^2 C_+(|q|^2)C_-(|q|^2)\,dx\\
&=\int_\T  \bigl[C_+(|q|^2)+C_-(|q|^2) -\tfrac1{2\pi} \mathcal{E}_0(q)\bigr]C_+(|q|^2)C_-(|q|^2)\,dx\\
&=\tfrac{1}{3}\int_\T\bigl[C_+(|q|^2)+C_-(|q|^2)\bigr]^3-\bigl[ C_+(|q|^2)\bigr]^3-\bigl[C_-(|q|^2)\bigr]^3\,dx\\
&\quad-\tfrac1{4\pi}\|q\|_{L^2}^2\|q\|_{L^4}^4-\tfrac1{8\pi^2}\|q\|_{L^2}^6,
\end{align*}
where we used \eqref{eqn:norm_Cp_q^2} in the final line.  Using again \eqref{Cid} we find
\begin{align*}
\int_\T\bigl[C_+(|q|^2)+C_-(|q|^2)\bigr]^3\,dx
&=\int_\T\bigl[|q|^2+\tfrac1{2\pi}\mathcal{E}_0(q)\bigr]^3\,dx\\
&=\|q\|_{L^6}^6+\tfrac3{2\pi}\|q\|_{L^4}^4\|q\|_{L^2}^2+\tfrac{1}{\pi^2}\|q\|_{L^2}^6.
\end{align*}
Using Fourier support considerations, we obtain
\begin{align*}
\int_\T\bigl[ C_+(|q|^2)\bigr]^3\,dx&=2\pi\bigl[\tfrac{1}{2\pi}\mathcal{E}_0(q)\bigr]^3=\tfrac{1}{4\pi^2} \|q\|_{L^2}^6
\end{align*}
and similarly,
\begin{align*}
\int_\T\bigl[ C_-(|q|^2)\bigr]^3\,dx&=2\pi\bigl[\tfrac{1}{2\pi}\mathcal{E}_0(q)\bigr]^3=\tfrac{1}{4\pi^2} \|q\|_{L^2}^6.
\end{align*}
We deduce that
\begin{align}
\|q C_+(|q|^2)\|^2_{L^2}
&=\tfrac{1}{3}\|q\|_{L^6}^6+\tfrac1{4\pi}\|q\|_{L^4}^4\|q\|_{L^2}^2+\tfrac{1}{24\pi^2}\|q\|_{L^2}^6.\label{eqn:L6_expression}
\end{align}

Combining \eqref{eqn:E_2_expansion} with \eqref{7:49} and \eqref{eqn:L6_expression}, we obtain 
\begin{align}
\mathcal{E}_2(q)&=\|q\|_{\dot{H}^{1}}^2\mp\bigl[\| C_+(|q|^2)\|_{\dot{H}^{1/2}}^2+\tfrac12\|q^2\|_{\dot{H}^{1/2}}^2+\tfrac1{2\pi}\|q\|_{L^2}^2\|q\|_{\dot{H}^{1/2}}^2\bigr]\nonumber\\
&\quad+\tfrac{1}{3}\|q\|_{L^6}^6+\tfrac1{4\pi}\|q\|_{L^4}^4\|q\|_{L^2}^2+\tfrac{1}{24\pi^2}\|q\|_{L^2}^6.\label{eqn:E2}
\end{align}

Inserting this into \eqref{5:08} we find
\begin{align*}
H(q)&= \tfrac12\|q\|_{\dot{H}^{1}}^2\mp\tfrac12\| C_+(|q|^2)\|_{\dot{H}^{1/2}}^2\mp\tfrac14\|q^2\|_{\dot{H}^{1/2}}^2\pm \tfrac1{2\pi}\|q\|_{L^2}^2\|q\|_{\dot{H}^{1/2}}^2\\
&\quad+\tfrac{1}{6}\|q\|_{L^6}^6-\tfrac1{4\pi}\|q\|_{L^4}^4\|q\|_{L^2}^2+\tfrac{1}{12\pi^2}\|q\|_{L^2}^6,
\end{align*}
as desired.
\end{proof}

We end this section with a comment on the coercivity of the Hamiltonian $H(q)$ on $\T$. Note that in both the focusing and defocusing cases, the Cauchy--Schwarz inequality applied to $\E_1$ yields
\begin{align*}
H(q)&\geq \tfrac{1}{2}\E_2(q)-\tfrac{3}{4\pi}\E_0(q)\sqrt{\E_0(q)\E_2(q)}+\tfrac{1}{4\pi^2}\E_0(q)^3= \tfrac{1}{2}\E_0(q)^3\bigl(X^2-\tfrac3{2\pi}X+\tfrac1{2\pi^2}\bigr)
\end{align*}
with $X=\sqrt{\E_2(q)/\E_0(q)^3}$. As the quadratic attains its minimum at $X=\frac3{4\pi}$, we see that
$$H(q)\geq-\tfrac1{32\pi^2}\E_0^3(q).$$
This lower bound is sharp in the defocusing case; concretely, equality holds for the sequence
$$
q_m(x) = \sqrt{2m}\, e^{imx} \qtq{with} m\in \N ,
$$
which has mass $M(q_m)=4\pi m$ diverging to infinity.   This sequence is easy to analyze because $q_m$ is an eigenvector of the Lax operator with eigenvalue $3m$.


\section{The conserved quantities commute}\label{sec:commute}
In this section we show that for all $\ka, \vk\geq 1$, the Hamiltonians $\beta_\ka(q) =\beta(\ka,q)$ and $\beta_\vk(q)=\beta(\vk, q)$ Poisson commute on $B_{M_*}$.  Consequently, their attendant Hamiltonian flows commute.  As $\beta_\ka(q)$ is a generating function for the quantities $\mathcal{E}_n(q)$, this also shows that these quantities are conserved along the flow generated by $\beta_\vk(q)$ for all $\vk\geq 1$.  A further application of this argument yields that all quantities $\mathcal{E}_n(q)$ are conserved along the flows generated by any $\mathcal{E}_m(q)$ with $m\geq 0$.  In particular, all $\mathcal{E}_n(q)$ are conserved along the flows \eqref{CCMR} and \eqref{CCMT}.

\begin{proposition}\label{P:beta comm}
For all $\ka, \vk\geq 1$ we have
$$
\{\beta_\vk(q),\beta_\ka(q)\}=0
$$
on $B_{M_*}$. In particular, 
$$
\{\beta_\vk(q),\mathcal E_n(q)\}=0 \qtq{and} \{\mathcal E_m(q),\mathcal E_n(q)\}=0
$$
for all $\vk\geq 1$ and $m,n\geq 0$.
\end{proposition}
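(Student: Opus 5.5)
We compute the bracket directly. The symplectic gradient of $\beta_\ka$ is explicit (Proposition~\ref{lem:beta_results}), so by \eqref{PBJ} and \eqref{1:51}
$$
\{\beta_\vk,\beta_\ka\}(q)=\omega_q(\nabla_\om\beta_\ka,\nabla_\om\beta_\vk)=d\beta_\vk|_q\bigl(\nabla_\om\beta_\ka\bigr)=2\Re\bigl\langle \nabla_\om\beta_\ka,\tfrac{\dl\beta_\vk}{\dl\bar q}\bigr\rangle .
$$
This is simply the time derivative of $\beta_\vk(q(t))$ along the $\beta_\ka$ flow. We therefore need to show that $\beta_\vk$ is conserved by the flow $\dd_t q=\nabla_\om\beta_\ka$.

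We work first with $q\in B_{M_*}\cap H^\infty_+$ and use the Lax structure. Let $R_\vk=(\Lc_q+\vk)^{-1}$. By \eqref{beta Lax pair} we have $\dd_t\Lc_q=[\mathcal P^\ka_\beta,\Lc_q]$, and hence $\dd_t R_\vk=[\mathcal P^\ka_\beta,R_\vk]$. By \eqref{one true peter} we have $\dd_t q=\mathcal P^\ka_\beta q$. Then
$$
\dd_t\langle q,R_\vk q\rangle=\langle \mathcal P q,R_\vk q\rangle+\langle q,[\mathcal P,R_\vk]q\rangle+\langle q,R_\vk\mathcal P q\rangle=\langle q,(\mathcal P^*+\mathcal P)R_\vk q\rangle ,
$$
since the $R_\vk\mathcal P$ terms cancel. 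Inspecting \eqref{eqn:Peter_op_ka}, every term of $\mathcal P^\ka_\beta$ is $i$ times a selfadjoint operator: $(\Lc_q+\ka)^{-1}$, $m_\ka C_+\overline m_\ka$, and the real constants. Thus $\mathcal P^\ka_\beta$ is anti-selfadjoint and the derivative vanishes. For this to be rigorous we must treat the identity as an algebraic one at a fixed $q$ rather than along a trajectory (whose existence we have not proved). Concretely, the content of \eqref{beta Lax pair} is that the directional derivative $D_X\Lc_q$ in the direction $X=\nabla_\om\beta_\ka(q)$ equals $[\mathcal P^\ka_\beta,\Lc_q]$. Since $\Lc_q$ depends on $q$ only through the bounded-in-form perturbation $qC_+\bar q$, we get $D_X R_\vk=-R_\vk(D_X\Lc_q)R_\vk=[\mathcal P,R_\vk]$ on a core. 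We have $D_Xq=X=\mathcal P q$, and the computation above then gives $D_X\beta_\vk=0$ pointwise.

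The main obstacle is justifying these manipulations, since $\Lc_q$ is unbounded and $[\mathcal P,\Lc_q]$ involves $\dd_x$. I would do the computation for $q\in H^\infty_+$, where $m_\ka, m_\vk\in H^\infty_+$ and all operators act on $H^1_+$ by Theorem~\ref{T:equiv}. I would then extend to all of $B_{M_*}$ by density. For this it suffices that $q\mapsto \nabla_\om\beta_\ka(q)$ and $q\mapsto \tfrac{\dl\beta_\vk}{\dl\bar q}$ are continuous from $L^2_+$ (within $B_{M}$, $M<M_*$) to $L^2_+$. This follows from continuity of $q\mapsto m_\ka=(\Lc_q+\ka)^{-1}q$: Theorem~\ref{T:equiv} gives uniform bounds on $(\Lc_q+\ka)^{-1}$ as a map $H^{-1}_+\to H^1_+$, and the resolvent identity gives continuity. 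The products $m_\ka C_\pm(q\overline m_\ka)$ are controlled by $H^1\subset L^\infty$ estimates.

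The remaining claims follow by expansion. For fixed $\vk$, the function $\ka\mapsto\{\beta_\vk,\beta_\ka\}$ vanishes identically for large $\ka$. By \eqref{m expansion} and the corresponding expansion of $\nabla_\om\beta_\ka$ (for $q\in H^\infty_+$, where the series converge for $\ka$ large relative to the appropriate norms of $q$), the coefficients $\{\beta_\vk,\mathcal E_n\}$ must all vanish. Expanding in $\vk$ likewise gives $\{\mathcal E_m,\mathcal E_n\}=0$ on the domain where these quantities are defined.
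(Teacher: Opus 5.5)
Your argument is correct, but it takes a different route from the paper. The paper proves $\{\beta_\vk,\beta_\ka\}=0$ by direct computation. It pairs the explicit symplectic gradient \eqref{beta_symplectic_deriv} with the explicit Wirtinger derivative \eqref{eqn:beta_deriv} and discards the terms that are real by selfadjointness of $\Lc_q$. It then substitutes $q=(\Lc_q+\vk)m_\vk$ and uses \eqref{Cid} with Fourier-support considerations to make the remaining terms cancel. You instead read the bracket as the derivative of $\beta_\vk$ along $X=\na_\om\beta_\ka$ and show it vanishes using the Lax structure: \eqref{beta Lax pair} and \eqref{one true peter} taken as pointwise identities, $D_XR_\vk=[\mathcal P^\ka_\beta,R_\vk]$, and anti-selfadjointness of $\mathcal P^\ka_\beta$. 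This is the same mechanism the paper itself uses in the proof of Theorem~\ref{thm:GWP_ka} to conserve $\lan q,F(\Lc_q)q\ran$.

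Your route is more conceptual and proves more: every $\lan q,F(\Lc_q)q\ran$ with $F$ bounded Poisson commutes with $\beta_\ka$. The price is that it rests on the Lax pair identity \eqref{beta Lax pair}, whose verification the paper only sketches, and it needs a smoothing-plus-density step. The paper's calculation uses only fully verified formulas and works directly at $L^2$ regularity on all of $B_{M_*}$.

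Two small corrections.

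First, $(\Lc_q+\ka)^{-1}$ gains only one derivative. Theorem~\ref{T:equiv} gives boundedness $L^2_+\to H^1_+$ and $H^{-1}_+\to L^2_+$, not $H^{-1}_+\to H^1_+$. The continuity you need still holds: combine the $L^2\to H^1$ bound with the Hilbert--Schmidt estimate \eqref{E:QHS} inside the resolvent identity. This is exactly Lemma~\ref{prop:H1_lipschitz}, giving \eqref{eqn:m_lipschitz}, and it does not depend on the present proposition.

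Second, the Neumann series \eqref{m expansion} need not converge for any $\ka$, even when $q\in H^\infty_+$. If the spectral measure of $q$ for the unbounded operator $\Lc_q$ has unbounded support, as is typical, then $\limsup_n|\E_n|^{1/n}=\infty$. Extract coefficients instead from the finite expansion $m_\ka=\sum_{n=0}^N(-1)^n\ka^{-(n+1)}\Lc_q^nq+(-1)^{N+1}\ka^{-(N+1)}(\Lc_q+\ka)^{-1}\Lc_q^{N+1}q$. Its remainder is $O(\ka^{-N-2})$ in $L^2$, so uniqueness of coefficients in asymptotic expansions gives the conclusion. The paper treats this generating-function expansion just as formally when it states the ``in particular'' claims, so this repairs a shared informality rather than a defect specific to your proof.
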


\begin{proof}
Recalling the definition \eqref{PBJ} of the Poisson bracket, \eqref{J}, and \eqref{symplectic gradient}, we may write
$$
\{\beta_\vk(q),\beta_\ka(q)\}=-\om_q(\na_\om \beta_\vk(q),\na_\om \beta_\ka(q))=- 2\Re\, \lan \na_\om \beta_\vk(q),\tfrac{\dl \beta_\ka}{\dl \bar{q}}\bigr|_q\ran.
$$

In what follows, we again focus attention to the circle setting since it is the more complicated case.  Employing \eqref{eqn:beta_deriv} and \eqref{beta_symplectic_deriv}, we may compute
\begin{align}
\{\beta_\vk(q),\beta_\ka(q)\}&=4\Im\,\lan m_\vk,m_\ka\ran \pm 4\Im\,\lan m_\vk, m_\ka C_-(q\overline m_\ka)\ran\nonumber\\
&\quad\mp 4\Im\,\lan m_\vk C_+(q\overline{m}_\vk),m_\ka\ran-4\Im\,\lan m_\vk C_+(q\overline{m}_\vk),m_\ka C_-(q\overline m_\ka)\ran \nonumber\\
&\quad\pm\tfrac{2}{\pi}\Im\,\beta_\vk\lan m_\vk,m_\ka\ran+\tfrac{2}{\pi}\Im\,\beta_\vk\lan m_\vk,m_\ka C_-(q\overline{m}_\ka)\ran \nonumber\\
&\quad\pm\tfrac{2}{\pi} \Im\, \beta_\ka\| m_\vk\|_{2}^2+\tfrac{2}{\pi}\| m_\vk\|_{2}^2\Im\, \lan q, m_\ka  C_-(q\overline{m}_\ka)\ran. \label{eqn:poisson_bracket}
\end{align}
Using that $\Lc_q$ is self-adjoint, we see that $\Im\, \beta_\ka=0$ and $\Im\,\lan m_\vk,m_\ka\ran=0$.  Also,
$$
\Im\, \lan q, m_\ka  C_-(q\overline{m}_\ka)\ran = \Im\, \lan q\overline{m}_\ka, C_-(q\overline{m}_\ka)\ran =\Im\,\|C_-(q\overline{m}_\ka)\|_2^2=0.
$$
Thus, \eqref{eqn:poisson_bracket} simplifies to
\begin{align}
\{\beta_\vk(q),\beta_\ka(q)\}&= \pm 4\Im\,\lan m_\vk, m_\ka C_-(q\overline m_\ka)\ran\mp 4\Im\,\lan m_\vk C_+(q\overline{m}_\vk),m_\ka\ran\nonumber\\
&\quad-4\Im\,\lan m_\vk C_+(q\overline{m}_\vk),m_\ka C_-(q\overline m_\ka)\ran+\tfrac{2}{\pi}\beta_\vk\Im\,\lan m_\vk,m_\ka C_-(q\overline{m}_\ka)\ran. \label{eqn:poisson_bracket'}
\end{align}

Substituting $q=(\Lc_q+\vk) m_\vk$ and $\bar q =\overline{(\Lc_q+\vk) m_\vk} $, we may write
\begin{align*}
\pm4\Im\,\lan &m_\vk, m_\ka C_-(q\overline m_\ka)\ran \mp4\Im\,\lan m_\vk C_+(q\overline m_\vk),m_\ka\ran \\
&=\pm4\Im\,\lan  m_\vk \overline m_\ka,  C_-\bigl[-i m_\vk '\overline m_\ka\mp q\overline m_\ka C_+(\bar{q} m_\vk )+\vk m_\vk \overline m_\ka\bigr]\ran \\
&\quad\mp4\Im\,\lan  m_\vk \overline m_\ka, C_-\bigl[i m_\vk \overline m_\ka '\mp \bar{q}m_\vk C_-(q \overline m_\ka )+\ka m_\vk \overline m_\ka\bigr]\ran\\
&= \mp4\Im\,  \lan  m_\vk \overline m_\ka,  i C_-\bigl[m_\vk \overline m_\ka\bigr]'\ran \pm4 (\vk-\ka)\Im\, \|C_-( m_\vk \overline m_\ka)\|_2^2\\
&\quad -4\Im\,\lan  m_\vk \overline m_\ka,  C_-\bigl[q\overline m_\ka C_+(\bar{q} m_\vk )\bigr]\ran+4 \Im\,\lan  m_\vk \overline m_\ka, C_-\bigl[\bar{q}m_\vk C_-(q \overline m_\ka )\bigr]\ran\\
&=-4\Im\,\lan  m_\vk \overline m_\ka,  C_-\bigl[q\overline m_\ka C_+(\bar{q} m_\vk )\bigr]\ran+4 \Im\,\lan  m_\vk \overline m_\ka, C_-\bigl[\bar{q}m_\vk C_-(q \overline m_\ka )\bigr]\ran.
\end{align*}
To continue, we use \eqref{Cid} and Fourier support considerations to write
\begin{align*}
&-4\Im\,\lan  m_\vk \overline m_\ka,  C_-\bigl[q\overline m_\ka C_+(\bar{q} m_\vk )\bigr]\ran+4 \Im\,\lan  m_\vk \overline m_\ka, C_-\bigl[\bar{q}m_\vk C_-(q \overline m_\ka )\bigr]\ran\\
&=-4\Im\,\lan  m_\vk \overline m_\ka, C_-\bigl[C_+(q\overline m_\ka )C_+(\bar{q} m_\vk )\bigr]\ran -4\Im\,\lan  m_\vk \overline m_\ka ,  C_-\bigl[C_-(q\overline m_\ka )C_+(\bar{q} m_\vk )\bigr]\ran\\
&\quad +\tfrac2{\pi}\beta_\ka \Im\,\lan  m_\vk \overline m_\ka ,  C_-\bigl[C_+(\bar{q} m_\vk )\bigr]\ran+4 \Im\,\lan  m_\vk \overline m_\ka , C_-\bigl[C_+(\bar{q}m_\vk) C_-(q \overline m_\ka  )\bigr]\ran\\
&\quad+4 \Im\,\lan  m_\vk \overline m_\ka , C_-(\bar{q}m_\vk) C_-(q \overline m_\ka  )\ran-\tfrac{2}{\pi} \beta_\vk\Im\,\lan  m_\vk \overline m_\ka , C_-(q \overline m_\ka  )\ran\\
&=- \tfrac1{\pi^2}\beta_\ka\beta_\vk\Im\,\lan  m_\vk,m_\ka\ran+\tfrac1{\pi^2}\beta_\ka\beta_\vk\Im\,\lan  m_\vk,m_\ka\ran \\
&\quad+4 \Im\,\lan  m_\vk \overline m_\ka , C_-(\bar{q}m_\vk) C_-(q \overline m_\ka  )\ran-\tfrac{2}{\pi} \beta_\vk\Im\,\lan  m_\vk \overline m_\ka , C_-(q \overline m_\ka  )\ran\\
&=4\Im\,\lan m_\vk C_+(q\overline{m_\vk}),m_\ka C_-(q\overline m_\ka )\ran -\tfrac{2}{\pi}\beta_\vk\Im\,\lan m_\vk,m_\ka C_-(q\overline{m}_\ka)\ran.
\end{align*}
Inserting this into \eqref{eqn:poisson_bracket'}, we find
\begin{align*}
\{\beta_\vk(q),\beta_\ka(q)\}=0,
\end{align*}
as desired.
\end{proof}

\section{Global well-posedness of the $H_\ka$ flow}\label{sec:GWP_kappa}
The expansion 
$$
\beta(\ka,q)=\sum_{n\geq0}(-1)^n\ka^{-(n+1)}\mathcal{E}_n(q),
$$
motivates the introduction of the regularized energies
\begin{equation}\label{11:47}
\E_1^\ka(q)=-\ka^2\beta(\ka,q) + \ka\E_0(q) \qtq{and}  \E_2^{\ka}(q)=\ka^3\beta(\ka,q)-\ka^2\E_0(q)+\ka\E_1(q).
\end{equation}

This was the strategy used in previous applications of the method of commuting flows to select the regularized Hamiltonians $H_\ka$.  In the line setting, this approach still works and for each $\kappa\geq 1$ we may define the regularized Hamiltonians
\begin{align}
H_{\ka}(q)&:=\tfrac12 \E_2^{\ka}(q)  \qquad \text{on $\R$}.  \label{eqn:H_ka R}
\end{align}

The naive modification of \eqref{Hamiltonians on T} in the circle setting would be to replace $\E_1(q)$ and $\E_2(q)$ by their regularized versions introduced in \eqref{11:47}. However, the resulting regularized Hamiltonian flows are not well-posed on $L^2_+(\T)$. Instead, we employ the regularized Hamiltonians
\begin{align}
H_{\ka}(q)&:=\tfrac12 \E_2^{\ka}(q)\pm\tfrac{3}{4\pi}\E_0(q)\E_1^{\ka}(q)+\tfrac{1}{4\pi^2}\E_0^3(q)\mp\tfrac1{4\pi} \ka\E_0(q)\| \Lc_qm_\kappa\|_{L^2}^2 \quad \text{on $\T$}.\label{eqn:H_ka T}
\end{align}

The final term in \eqref{eqn:H_ka T} has no analogue in \eqref{Hamiltonians on T}.  In fact, for smooth $q$, this term drops out in the $\ka\to\infty$ limit.  Nevertheless, it meaningfully affects the dynamics and its inclusion is necessary to ensure convergence of the $H_\ka(q)$ flow to the $H(q)$ flow at low regularity; see Section \ref{sec:convergence}. For the method of commuting flows, it is also essential that any renormalization maintains the commuting flow property. Our renormalization is based on a commuting Hamiltonian; see \eqref{11:51}.

The goal of this section is to prove global well-posedness on $L^2$ for the flows generated by $H_\ka(q)$:

\begin{theorem}\label{thm:GWP_ka}
Fix $\ka\geq1$. The initial-value problem
\begin{align}\label{IV Hk}
\dd_t q=\na_\om H_\ka(q),\qquad q(0)=q_0
\end{align}
is globally well-posed on $B_{M_*}$. Moreover, for any $M<M_*$, if the set $Q\subset B_M$ is $L^2$-equicontinuous, then the set of orbits
$$
Q_{\ast}:=\bigl\{e^{t\na_\om H_\ka}q_0:\,q_0\in Q,\, t\in \R,\,\ka\geq 1 \bigr\}
$$
is also $L^2$-bounded and equicontinuous.
\end{theorem}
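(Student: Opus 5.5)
The plan is to treat \eqref{IV Hk} as an ODE on the open set $B_{M_*}$ of the Hilbert space $L^2_+$. Local well-posedness will come from Picard iteration, and global existence from conservation of mass. Equicontinuity of orbits will follow from conservation of $\beta(\vk,\cdot)$ along the $H_\ka$ flow, fed into Theorem~\ref{T:equi-c}.

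\textbf{Step 1: write the vector field explicitly.} By Propositions~\ref{lem:beta_results} and~\ref{P:En flows}, $\na_\om H_\ka$ is a linear combination of $\na_\om\beta_\ka$, $\na_\om\E_0=-2iq$, and $\na_\om\E_1=-2q'\pm\tfrac{2i}{\pi}q\E_0$. The combination has $\kappa$-dependent coefficients and, on $\T$, mass-dependent coefficients. The derivative term $-2q'$ coming from $\ka\E_1$ in $\E_2^\ka$ is cancelled by the linear part $-2i\ka^3 m_\ka$ of $\na_\om\beta_\ka$. To see this, use $m_\ka=\ka^{-1}q-\ka^{-2}\Lc_q q+\ka^{-2}\Lc_q^2 m_\ka$. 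What remains is a term $e^{-c\,t\,\partial_x}$-like transport, which we handle as a linear isometric group in Duhamel form, plus terms built from $m_\ka$, $\Lc_q m_\ka$, $\Lc_q^2 m_\ka$, and products such as $m_\ka C_+(q\overline m_\ka)$.

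On $\T$ we must also compute $\na_\om$ of the extra term $\E_0\|\Lc_q m_\ka\|^2$. For this, note $\|\Lc_q m_\ka\|^2=\beta_\ka-2\ka\langle q,(\Lc_q+\ka)^{-2}q\rangle\cdot(\ldots)$, or more cleanly $-\partial_\ka$-type expressions of $\beta_\ka$. Hence its symplectic gradient is $\partial_\ka$ of \eqref{beta_symplectic_deriv}, which is again explicit. This is the sense of \eqref{11:51}: it commutes with the $\beta_\vk$.

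\textbf{Step 2: Lipschitz bounds.} Using Theorem~\ref{T:equiv}, the maps $q\mapsto m_\ka$ and $q\mapsto\Lc_q m_\ka$ are locally Lipschitz from $B_M$ into $H^1_+$ and $L^2_+$ respectively, for $M<M_*$. This follows from the resolvent identity and bounds such as $\|(\Lc_q+\ka)^{-1}\|_{L^2\to H^1}\lesssim_M 1$. Products like $m_\ka C_+(q\overline m_\ka)$ are then controlled by $H^1\subset L^\infty$. So, modulo the transport group, the nonlinearity is locally Lipschitz on $B_M$, and Picard iteration gives local well-posedness with a time of existence depending only on $M$ and $\ka$.

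\textbf{Step 3: globalize.} Mass is conserved, because $\{\E_0,\beta_\ka\}=0$ by Proposition~\ref{P:beta comm} and the remaining terms are functions of $\E_0$, $\E_1^\ka$, $\beta_\ka$. Hence orbits stay in $B_{M(q_0)}$, and iterating the local result yields global solutions.

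\textbf{Step 4: equicontinuity.} The same Poisson commutativity shows that $\beta(\vk,q(t))$ is conserved for every $\vk$. The quantity in \eqref{equi3} is a function of $\beta$, by the identity
\[
\tfrac{(\Lc+1)^2}{(\Lc+1)^2+\vk^2}=\Re\bigl[1-\tfrac{\vk\,i}{\Lc+1+i\vk}\bigr].
\]
Since this uses complex $\vk$, I would instead combine the real-parameter bound $\tfrac{X^2}{X^2+\vk^2}\le \tfrac{2X}{X+\vk}$ with $\langle q,\Lc_q(\Lc_q+\vk)^{-1}q\rangle=M-\vk\beta_\vk$, which is conserved. This conserved quantity is comparable to \eqref{equi3} uniformly on $B_M$, and is small uniformly in $q\in Q$ by Theorem~\ref{T:equi-c}. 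Therefore $Q_*$ satisfies \eqref{equi3}, and hence is equicontinuous, uniformly in $t$ and in $\ka\ge1$.

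\textbf{Main obstacle.} I expect the main obstacle to be Step 1 on $\T$: checking that the extra renormalizing term yields an explicitly computable gradient, and that no unbounded ($H^{1/2}$-level) quantity such as $P(q)$ multiplies $q$.
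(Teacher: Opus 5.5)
Your Steps 2 and 3 follow the paper's route. These are Picard iteration in Duhamel form around an isometric linear group, Lipschitz bounds from the resolvent identity and Theorem~\ref{T:equiv} (this is Lemma~\ref{prop:H1_lipschitz}), and globalization by mass conservation via Proposition~\ref{P:beta comm}. For the torus renormalization term you also need $(\Lc_q+\ka)^{-1}m_\ka$ to be Lipschitz into $H^1$, because of the $\dd_\ka\beta_\ka$ in \eqref{11:51}.

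The cancellation you describe in Step 1 does not occur, however, and taken literally it would break the Picard argument. Substituting $m_\ka=\ka^{-1}q-\ka^{-2}\Lc_q q+\ka^{-2}\Lc_q^2 m_\ka$ into $-2i\ka^3 m_\ka$ does produce $2i\ka\Lc_q q$, which cancels $-2\ka q'$. But it also produces $-2i\ka\Lc_q^2 m_\ka=-2i\ka\Lc_q q+2i\ka^2\Lc_q m_\ka$, which puts $-2\ka q'$ back. For $q\in L^2_+$, $\Lc_q^2 m_\ka$ is in general only in $H^{-1}$, so it cannot be a building block of an $L^2$-Lipschitz nonlinearity.

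The transport is genuinely present: $\na_\om H_\ka$ contains $i\ka^2 q-\ka q'$. The correct bookkeeping, which is the paper's, puts exactly this into the propagator $e^{it\ka^2-\ka t\dd_x}$. It leaves $-i\ka^3 m_\ka$ untouched in $\mathcal N(q)=\na_\om H_\ka(q)-i\ka^2q+\ka q'$, since $q\mapsto m_\ka$ is Lipschitz into $H^1$ by \eqref{eqn:m_lipschitz}.

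Step 4 takes a genuinely different route. The paper derives a Lax pair \eqref{Hk Lax pair} with anti-selfadjoint $\mathcal P_\ka$, so every $\langle q,F(\Lc_q)q\rangle$ with bounded $F$ is conserved. The functional in \eqref{equi3} is then itself conserved and Theorem~\ref{T:equi-c} applies directly, at the price of computing $\mathcal P_\ka$ for the renormalized torus Hamiltonian. You use only Proposition~\ref{P:beta comm}, together with its $\ka$-derivative to handle $\|\Lc_q m_\ka\|^2$. That is lighter and closer in spirit to the commuting-flows method, but your comparison step is stated too loosely and needs two repairs.

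First, on $\T$ in the focusing case $\Lc_q$ can have spectrum in $(-1,0)$ (for instance, small constant $q$). So apply your inequality with $X=\Lc_q+1>0$, which gives
\[
\bigl\langle q,\tfrac{(\Lc_q+1)^2}{(\Lc_q+1)^2+\vk^2}q\bigr\rangle\le 2\bigl[M(q)-\vk\beta(\vk+1,q)\bigr],
\]
and the right-hand side is conserved.

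Second, the two functionals are not comparable at the same parameter. For fixed $X$, $X/(X+\vk)\sim\vk^{-1}$ while $X^2/(X^2+\vk^2)\sim\vk^{-2}$. To show $M-\vk\beta(\vk+1,\cdot)\to0$ uniformly on $Q$, use instead $\tfrac{X}{X+\vk}\le\vk^{-1/2}+\tfrac{2X^2}{X^2+\vk}$ together with \eqref{equi3} at parameter $\sqrt{\vk}$.

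With these repairs, uniform smallness transfers from $Q$ to $Q_*$. It is uniform in $t$ and in $\ka$, since the conserved quantity does not involve $\ka$, and Theorem~\ref{T:equi-c} concludes.
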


We will construct solutions to \eqref{IV Hk} using the Picard theorem, which requires that the forcing term be a Lipschitz function of $q$.  The proof of this fact relies on the estimates contained in the next lemma:

\begin{lemma}\label{prop:H1_lipschitz}
Fix $M< M_\ast$ and $\ka\geq 1$. For all $p,q\in B_M$ and $f,g\in L^2_+$, we have
\begin{align}\label{8:49}
\|(\Lc_q+\ka)^{-1}f-(\Lc_p+\ka)^{-1}g\|_{H^1}\lesssim_M \|f-g\|_{L^2}+\|f\|_{L^2}\|q-p\|_{L^2}.
\end{align}
In particular,
\begin{align}
\|m(\ka,q)-m(\ka,p)\|_{H^1}&\lesssim_M\|q-p\|_{L^2}, \label{eqn:m_lipschitz} \\
\|(\Lc_q+\ka)^{-1} m(\ka,q)-(\Lc_p+\ka)^{-1} m(\ka,p)\|_{H^1}&\lesssim_M\|q-p\|_{L^2}, \label{eqn:H1_lipschitz3}\\
\|\Lc_q m(\ka,q)-\Lc_p m(\ka,p)\|_{L^2}&\lesssim_{M, \ka}\|q-p\|_{L^2}. \label{eqn:L2_lipschitz} 
\end{align}
\end{lemma}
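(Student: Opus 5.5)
The plan is to split the difference with the resolvent identity and control each piece by Theorem~\ref{T:equiv} at $s=\pm1$. Write
\begin{align*}
(\Lc_q+\ka)^{-1}f-(\Lc_p+\ka)^{-1}g = (\Lc_q+\ka)^{-1}(f-g) + \bigl[(\Lc_q+\ka)^{-1}-(\Lc_p+\ka)^{-1}\bigr]g .
\end{align*}
For the first term, Theorem~\ref{T:equiv} with $s=-1$ gives $\|(\Lc_0+\ka)(\Lc_q+\ka)^{-1}h\|_{L^2}\le C_M\|h\|_{L^2}$. Since $\ka\ge1$, $\|u\|_{H^1}\lesssim\|(\Lc_0+\ka)u\|_{L^2}$, so $\|(\Lc_q+\ka)^{-1}(f-g)\|_{H^1}\lesssim_M\|f-g\|_{L^2}$.

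For the second term, use $\Lc_q-\Lc_p=\mp(qC_+\bar q-pC_+\bar p)$ and write
\begin{align*}
\bigl[(\Lc_q+\ka)^{-1}-(\Lc_p+\ka)^{-1}\bigr]g=\pm(\Lc_q+\ka)^{-1}\bigl[(q-p)C_+\bar q + pC_+(\bar q-\bar p)\bigr](\Lc_p+\ka)^{-1}g .
\end{align*}
It is more symmetric to place the factor with $f$ rather than $g$: swap the roles of $p,q$ and absorb the $\|f-g\|$ discrepancy into the first term. Set $u=(\Lc_p+\ka)^{-1}g$; by the $s=-1$ bound, $\|(\Lc_0+\ka)u\|_{L^2}\lesssim_M\|g\|$. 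The key step is to bound the multiplication-type operators in $L^2$:
\begin{align*}
\|(q-p)C_+(\bar q u)\|_{L^2}\le\|q-p\|_{L^2}\|C_+(\bar q u)\|_{L^\infty},
\end{align*}
which requires $C_+(\bar q u)\in L^\infty$. Rather than handling this directly, I would apply the outer resolvent in the form $(\Lc_q+\ka)^{-1}=(\Lc_0+\ka)^{-1}[1\mp T(q,q)^*]^{-1}$ from \eqref{E:rightleft}. The $H^1$ norm of the second term is then $\lesssim_M\|(q-p)C_+\bar q u+pC_+(\bar q-\bar p)u\|_{L^2}$, using \eqref{373}. To finish, it suffices to know that $\|aC_+(\bar b u)\|_{L^2}\lesssim\|a\|_{L^2}\|b\|_{L^2}\|(\Lc_0+\ka)u\|_{L^2}$. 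This is exactly the operator bound $\|T(a,b)\|_{\op}\le\|T(a,b)\|_{\I_2}\lesssim\|a\|\|b\|$ from \eqref{E:QHS}, applied to $v=(\Lc_0+\ka)u$. With $\|q\|,\|p\|\le M^{1/2}$, this yields $\lesssim_M\|q-p\|\,\|g\|$. Since $\|g\|\le\|f\|+\|f-g\|$, and the second piece is harmless because $\|q-p\|\lesssim_M1$ on $B_M$, we obtain \eqref{8:49}.

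The corollaries follow by specialization. For \eqref{eqn:m_lipschitz}, take $f=q$ and $g=p$. For \eqref{eqn:H1_lipschitz3}, take $f=m(\ka,q)$ and $g=m(\ka,p)$; here $\|f-g\|_{L^2}\lesssim\|q-p\|$ by \eqref{eqn:m_lipschitz} and $\|f\|\lesssim_M M^{1/2}$. For \eqref{eqn:L2_lipschitz}, write $\Lc_q m(\ka,q)=q-\ka m(\ka,q)$, so the difference is $(q-p)-\ka(m(\ka,q)-m(\ka,p))$, which is bounded by $(1+\ka)\|q-p\|$.

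The main obstacle is the perturbation term. The trick is to route everything through $T(a,b)$, which is Hilbert--Schmidt uniformly in $\ka\ge1$, together with the uniform invertibility \eqref{373}, rather than through $L^\infty$ bounds that are unavailable at $L^2$ regularity.
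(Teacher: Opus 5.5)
Your proof is correct. For \eqref{8:49}, \eqref{eqn:m_lipschitz}, and \eqref{eqn:H1_lipschitz3} it follows the paper's argument. Both use the resolvent identity, Theorem~\ref{T:equiv} to bound $(\Lc_0+\ka)(\Lc_q+\ka)^{-1}$ on $L^2_+$ uniformly in $\ka\geq1$, and the operator bound for $T(a,b)$ from \eqref{E:QHS} (recorded in the paper as \eqref{8:50}). The only bookkeeping difference is that the paper writes the difference as $(\Lc_q+\ka)^{-1}(\Lc_p-\Lc_q)(\Lc_p+\ka)^{-1}f+(\Lc_p+\ka)^{-1}(f-g)$. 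The perturbation then acts on $f$ directly, so the paper does not need your absorption step via $\|g\|\leq\|f\|+\|f-g\|$ and $\|q-p\|_{L^2}\leq 2M^{1/2}$; your version is equally valid.

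There is one small slip. \eqref{E:rightleft} gives $(\Lc_q+\ka)^{-1}=(\Lc_0+\ka)^{-1}[1\mp T(q,q)]^{-1}=[1\mp T(q,q)^*]^{-1}(\Lc_0+\ka)^{-1}$, not the mixed form you wrote. This is harmless: all you use is $\|(\Lc_q+\ka)^{-1}h\|_{H^1}\lesssim_M\|h\|_{L^2}$, which you already had from Theorem~\ref{T:equiv}.

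Where you genuinely depart from the paper is \eqref{eqn:L2_lipschitz}. The paper expands $\Lc_q m=-im'\mp qC_+(\bar q m)$ and controls the derivative term by \eqref{eqn:m_lipschitz}. It then splits the nonlinear part into three bilinear differences, each bounded via \eqref{8:50}, with the $\ka$-dependence entering through $\|(\Lc_0+\ka)[m(\ka,q)-m(\ka,p)]\|_{L^2}$. You instead use $\Lc_q m(\ka,q)=q-\ka m(\ka,q)$, which reduces the claim in one line to the $L^2$ part of \eqref{eqn:m_lipschitz}, with constant $\lesssim_M 1+\ka$. This is shorter and never touches the nonlinear terms.

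Your route can even give more. Since $\Lc_q+\ka\geq(1-\tfrac{M}{2\pi})\ka$ in the focusing case (and $\Lc_q+\ka\geq\ka$ in the defocusing case), one has $\|(\Lc_q+\ka)^{-1}\|_{L^2\to L^2}\lesssim_M\ka^{-1}$. Running the resolvent identity in $L^2$ rather than $H^1$ then gives $\ka\|m(\ka,q)-m(\ka,p)\|_{L^2}\lesssim_M\|q-p\|_{L^2}$. With your identity, this yields \eqref{eqn:L2_lipschitz} with a constant independent of $\ka\geq1$, which is more than the lemma asks.
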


\begin{proof}
Using the resolvent identity, we may write
\begin{equation*}
(\Lc_q+\ka)^{-1}f-(\Lc_p+\ka)^{-1}g=(\Lc_q+\ka)^{-1}(\Lc_p-\Lc_q)(\Lc_p+\ka)^{-1}f+(\Lc_p+\ka)^{-1}(f-g).
\end{equation*}

By Theorem~\ref{T:equiv} we see that
\begin{align}\label{8:51}
\|(\Lc_p+\ka)^{-1}(f-g)\|_{H^1}\lesssim_M\|f-g\|_{L^2}.
\end{align}

Recall that \eqref{E:QHS} for the operator $T$ yields
\begin{align}\label{8:50}
\|uC_+\bar v (\mathcal L_0+\ka)^{-1}\|_{\op}\lesssim \|u\|_{L^2} \|v\|_{L^2},
\end{align}
uniformly for $u,v\in L^2_+$. Employing this together with Theorem~\ref{T:equiv}, we may bound
\begin{align*}
\|(\Lc_q+\ka)^{-1}&(\Lc_p-\Lc_q)(\Lc_p+\ka)^{-1}f\|_{H^1}\\
&\lesssim_M \|(p-q) C_+\bigl[\bar{p}(\Lc_p+\ka)^{-1}f\bigr]\|_{L^2}+\|q C_+\bigl[(\bar{p}-\bar{q})\,(\Lc_p+\ka)^{-1}f\bigr]\|_{L^2}\\
&\lesssim_M \|p-q\|_{L^2}\|p\|_{L^2}\|(\mathcal L_0+\ka)(\Lc_p+\ka)^{-1}f\|_{L^2}\\
&\quad +\|q\|_{L^2}\|p-q\|_{L^2}\|(\mathcal L_0+\ka)(\Lc_p+\ka)^{-1}f\|_{L^2}\\
&\lesssim_M \|p-q\|_{L^2}\|f\|_{L^2}.
\end{align*}
Combining this with \eqref{8:51} yields \eqref{8:49}.

Applying \eqref{8:49} with $f=q$ and $g=p$ yields \eqref{eqn:m_lipschitz}.  Substituting $f=m(\ka,q)$ and $g=m(\ka,p)$ in \eqref{8:49} yields \eqref{eqn:H1_lipschitz3}.

Finally, using \eqref{eqn:m_lipschitz} and \eqref{8:50}, we may bound
\begin{align*}
\|\Lc_q &m(\ka,q)-\Lc_p m(\ka,p)\|_{L^2}\\
&\lesssim \|m(\ka,q) - m(\ka,p)\|_{H^1} + \|(q-p)C_+ \bar q m(\ka, q)\|_{L^2} \\
&\quad + \|pC_+ (\bar q-\bar p) m(\ka, q)\|_{L^2} +\|pC_+ \bar p [m(\ka, q)-m(\ka,p)]\|_{L^2} \\
&\lesssim_M \|q-p\|_{L^2} + \|q-p\|_{L^2}(\|p\|_{L^2}+\|q\|_{L^2})\|(\mathcal L_0+\ka)m(\ka,q)\|_{L^2}\\
&\quad +\|p\|_{L^2}^2\|\mathcal (L_0+\ka)[m(\ka,q)-m(\ka,p)]\|_{L^2}\\
&\lesssim_{M,\ka}  \|q-p\|_{L^2},
\end{align*}
which yields \eqref{eqn:L2_lipschitz}.
\end{proof}

\begin{proof}[Proof of Theorem~\ref{thm:GWP_ka}]
We present the details in the case of the circle, which is the more challenging of the two geometries. We begin by computing the symplectic gradient of $H_\ka$.  To keep formulas within the margins, throughout the proof we use the shorthand $\beta$ and $m$ for $\beta(\ka,q)$ and $m(\ka,q)$, respectively.

Recalling \eqref{beta_symplectic_deriv}, \eqref{eqn:E0 and E1}, and \eqref{11:47}, we find
\begin{equation}\label{symplectic deriv reg E}
\begin{aligned}
\na_\om\E_1^\ka&=-\ka^2\bigl[-2im\pm 2im C_+(q\overline{m})\mp\tfrac{i}{\pi} m \beta\mp\tfrac{i}{\pi}q\|m \|^2_{L^2}\bigr] -2i\ka q\\
\na_\om\E_2^\ka&=\ka^3\bigl[-2im\pm 2im C_+(q\overline{m})\mp\tfrac{i}{\pi} m \beta\mp\tfrac{i}{\pi}q\|m \|^2_{L^2}\bigr] \\
&\quad+2i\ka^2 q +\ka\bigl[ -2q'\pm\tfrac{2i}{\pi}q\mathcal{E}_0\bigr].
\end{aligned}
\end{equation}
Observing that
\begin{align}\label{11:51}
\|\Lc_q m\|_{L^2}^2 = \langle q, \Lc_q^2 (\Lc_q+\ka)^{-2}q\rangle &=  \langle q, \bigl[1-2\ka (\Lc_q+\ka)^{-1} + \ka^2(\Lc_q+\ka)^{-2}\bigr] q\rangle\nonumber \\
&=\E_0 - 2\ka\beta-\ka^2 \tfrac{\partial\beta}{\partial\kappa},
\end{align}
we deduce that
\begin{align}
\na_\om\| \Lc_qm\|_{L^2}^2&=\na_\om\E_0 - 2\ka\na_\om\beta-\ka^2 \tfrac{\partial}{\partial\kappa}\na_\om\beta\nonumber\\
&=-2iq+4i\ka m\mp4i\ka m C_+(q\overline{m})\pm\tfrac{2i}{\pi}\ka m\beta\pm\tfrac{2i}{\pi}\ka q\|m\|^2_{L^2}\nonumber\\
&\quad-2i \ka^2(\Lc_q+\ka)^{-1}m\pm2i\ka^2[(\Lc_q+\ka)^{-1}m] C_+(q\overline{m})\nonumber\\
&\quad\pm2i\ka^2 m C_+[q\overline{(\Lc_q+\ka)^{-1}m}]\mp\tfrac{i}{\pi}\ka^2\beta (\Lc_q+\ka)^{-1}m\nonumber\\
&\quad\mp\tfrac{i}{\pi}\ka^2 m\|m\|_{L^2}^2\mp\tfrac{2i}{\pi}\ka^2 q\lan m,(\Lc_q+\ka)^{-1}m\ran.\label{eqn:na_om_Lqm}
\end{align}
Inserting all these into \eqref{eqn:H_ka T}, we find
\begin{align*}
\na_\om H_\ka&=(\ka^3\mp \tfrac3{2\pi}\ka^2\E_0)\bigl[-im\pm im C_+(q\overline{m})\mp \tfrac{i}{2\pi}m\beta \mp \tfrac{i}{2\pi}q\|m\|_{L^2}^2\bigr]\\
&\quad+i\ka^2q -\ka q' \mp\tfrac{2i}{\pi}\ka q\E_0 \pm \tfrac{3i}{2\pi}\ka^2\beta q-\tfrac{3i}{2\pi^2}\E_0^2q\pm \tfrac{i}{2\pi}\ka q\| \Lc_q m\|_{L^2}^2\\
&\quad\mp\tfrac{1}{2\pi}\ka\E_0\Bigl\{-iq+2i\ka m\mp2i\ka m C_+(q\overline{m})\pm\tfrac{i}{\pi}\ka m\beta\pm\tfrac{i}{\pi}\ka q\|m\|^2_{L^2}\nonumber\\
&\quad-i \ka^2[(\Lc_q+\ka)^{-1}m]\pm i\ka^2[(\Lc_q+\ka)^{-1}m] C_+(q\overline{m})\nonumber\\
&\quad\pm i\ka^2 m C_+[q\overline{(\Lc_q+\ka)^{-1}m}]\mp\tfrac{i}{2\pi}\ka^2\beta(\Lc_q+\ka)^{-1}m\nonumber\\
&\quad\mp\tfrac{i}{2\pi}\ka^2 m\|m\|_{L^2}^2\mp\tfrac{i}{\pi}\ka^2 q\lan m,(\Lc_q+\ka)^{-1}m\ran\Bigr\}.
\end{align*}

In this way, the Duhamel formula corresponding to \eqref{IV Hk} takes the form
\begin{equation}\label{eqn:duhamel}
q(t)=e^{it\ka^2-\ka t\dd_x}q_0+\int_0^t e^{i(t-s)\ka^2-\ka(t-s)\dd_x} \mathcal{N}(q(s))\,ds,
\end{equation}
where $\mathcal{N}(q)=\na_\om H_\ka(q)-i\ka^2 q+\ka q'$. 

An application of Lemma~\ref{prop:H1_lipschitz} and Theorem~\ref{T:equiv} shows that the mapping $\mathcal{N}: B_M \mapsto L^2_+$ is Lipschitz, with Lipschitz constant depending only on $M$ and $\ka$. The Picard theorem then yields local well-posedness for the initial-value problem \eqref{IV Hk} with the time of existence depending only on the $L^2_+$ norm of the initial data and $\ka$.  To extend the solution to a global-in-time solution, it suffices to observe that the $H_\ka$ flow conserves the $L^2$ norm.  This follows from  Proposition~\ref{P:beta comm} or can be deduced directly from \eqref{IV Hk}.

We now turn to the equicontinuity statement in Theorem~\ref{thm:GWP_ka}.  To this end, let $Q\subset B_M$ be $L^2$-equicontinuous. As the $H_\ka$ flow conserves the $L^2$ norm, it is immediate that $Q_{\ast}\subset B_M$.  In view of Theorem~\ref{T:equi-c}, to prove that $Q_{\ast}$ is $L^2$-equicontinuous it suffices to show that
$$
\lim_{\varkappa\to\infty}\ \sup_{q\in Q_\ast}\ \bigl\langle q, \tfrac{(\Lc_q+1)^2}{(\Lc_q+1)^2+\varkappa^2} q \bigr\rangle =0.
$$

As we will explain below, the quantity 
\begin{align*}
\bigl\langle q, \tfrac{(\Lc_q+1)^2}{(\Lc_q+1)^2+\varkappa^2} q \bigr\rangle
\end{align*}
is conserved by the $H_\ka$ flow. Consequently,
$$
\lim_{\varkappa\to\infty}\ \sup_{q\in Q_\ast}\ \bigl\langle q, \tfrac{(\Lc_q+1)^2}{(\Lc_q+1)^2+\varkappa^2} q \bigr\rangle =
\lim_{\varkappa\to\infty}\ \sup_{q\in Q}\ \bigl\langle q, \tfrac{(\Lc_q+1)^2}{(\Lc_q+1)^2+\varkappa^2} q \bigr\rangle =0,
$$
where the last equality follows from Theorem~\ref{T:equi-c} and the equicontinuity of $Q$.

That quantities of the form $\langle q, F(\Lc_q) q\rangle$ with $F\in L^\infty$ are conserved by the $H_\ka$ flow follows readily from the Lax pair representation for this flow.  Indeed, combining \eqref{beta Lax pair}, \eqref{eqn:Peter_op_ka}, \eqref{one true peter}, \eqref{En Lax pair}, \eqref{eqn:Peter_op_n}, \eqref{one true peter for En}, \eqref{eqn:H_ka T}, and \eqref{11:51}, we find that
\begin{equation}
\text{$q$ solves \eqref{IV Hk}} \qquad \iff \qquad \dd_t \Lc_q = \bigl[\mathcal{P_\ka},\Lc_q\bigr] \qquad \iff\qquad \dd_t q=\mathcal{P_\ka}q \label{Hk Lax pair}
\end{equation}
where, on the circle, $\mathcal P_\ka$ takes the form
\begin{align*}
\mathcal{P_\ka}&:=(\ka^3\mp \tfrac3{2\pi}\ka^2\E_0)\bigl[-i(\Lc_q+\ka)^{-1}\pm im_\ka C_+ \overline m_\ka \mp\tfrac{i}{2\pi}\beta_\ka(\Lc_q+\ka)^{-1} \mp\tfrac{i}{2\pi}\|m_\ka\|_2^2\bigr]\\
&\quad+i\ka^2 -\ka \partial_x  \mp\tfrac{2i}{\pi}\ka \E_0 \pm \tfrac{3i}{2\pi}\ka^2\beta -\tfrac{3i}{2\pi^2}\E_0^2\pm \tfrac{i}{2\pi}\ka \| \Lc_q m\|_{L^2}^2\pm \tfrac{i}{2\pi}\ka\E_0\\
&\quad\mp\tfrac{1}{2\pi}\ka^2\E_0\Bigl\{2i (\Lc_q+\ka)^{-1} \mp2i m C_+\overline{m}\pm\tfrac{i}{\pi} \beta(\Lc_q+\ka)^{-1}\pm\tfrac{i}{\pi} \|m\|^2_{L^2}\nonumber\\
&\quad-i \ka(\Lc_q+\ka)^{-2}\pm i\ka[(\Lc_q+\ka)^{-1}m] C_+\overline{m}\pm i\ka m C_+\overline{(\Lc_q+\ka)^{-1}m}\nonumber\\
&\quad\mp\tfrac{i}{2\pi}\ka\beta(\Lc_q+\ka)^{-2}\mp\tfrac{i}{2\pi}\ka \|m\|_{L^2}^2(\Lc_q+\ka)^{-1}\mp\tfrac{i}{\pi}\ka \lan m,(\Lc_q+\ka)^{-1}m\ran\Bigr\}.
\end{align*}
Thus, under the $H_\ka$ flow we have
\begin{align*}
\tfrac{d}{dt} \langle q, F(\Lc_q) q\rangle = \langle \mathcal{P_\ka} q, F(\Lc_q) q\rangle +  \langle q,\bigl[ \mathcal{P_\ka}, F(\Lc_q) \bigr] q\rangle + \langle q, F(\Lc_q)  \mathcal{P_\ka} q\rangle=0,
\end{align*}
where in the last equality we used the anti-selfadjointness of $\mathcal{P_\ka}$.
\end{proof}

\section{Global well-posedness}\label{sec:convergence}
In this section we present the proof of Theorem~\ref{T:GWP}. The key ingredient is the convergence result \eqref{H diff}.

\begin{proposition}\label{prop:diff_estimate}
Fix $M<M_\ast$. On $B_M$ we have $\na_\om H(q)\in H^{-5}$; in fact,
\begin{align}\label{H Lip}
\bigl\|\na_\om [H(q)-H(p)]\bigr\|_{H^{-5}}\lesssim_M \|q-p\|_{L^2}
\end{align} 
for all $q,p\in B_M$.  Moreover, for any $L^2$-equicontinuous set $Q\subset B_M$, we have
\begin{align}\label{H diff}
\lim_{\ka\to\infty}\,\sup_{q\in Q}\,\bigl\|\na_\om[H_\ka(q)-H(q)]\bigr\|_{H^{-5}}=0.
\end{align}
\end{proposition}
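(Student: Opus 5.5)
For \eqref{H Lip}, I will work directly from the explicit formula \eqref{5:09}: $\na_\om H(q) = iq''\pm 2qC_+(|q|^2)'$ on $\R$, with the extra term $\mp\tfrac1\pi M(q)q'$ on $\T$.

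The linear terms $iq''$ and $M(q)q'$ are trivially Lipschitz from $L^2$ into $H^{-2}$, since $M$ is Lipschitz on $B_M$.

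For the cubic term I use $C_+(|q|^2)'$, with $|q|^2\in L^1\subset H^{-1}$ (by Sobolev embedding $H^1\subset L^\infty$ in one dimension). Thus $C_+(|q|^2)'\in H^{-2}$, and we need to multiply it by $q\in L^2$. To make sense of this I write
\[
qC_+(|q|^2)' = \bigl(qC_+(|q|^2)\bigr)' - q'C_+(|q|^2).
\]
Better still, I test against a Schwartz or smooth $\phi$:
\[
\langle \phi, qC_+(|q|^2)'\rangle = -\langle (\bar q\phi)', C_+(|q|^2)\rangle .
\]
This only requires pairing $H^{-1}$-type objects against $C_+(|q|^2)$, and the latter lies in a negative Sobolev space because $|q|^2\in L^1\subset H^{-1/2-}$. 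A cleaner route is duality: for $\phi\in H^5$,
\[
\bigl|\langle \phi, qC_+(|q|^2)'\rangle\bigr| = \bigl|\langle \bar q \phi' + \bar q'\phi \ldots\rangle\bigr|.
\]
Since derivatives falling on $q$ are awkward, I instead use the $H^{-1}$ estimate $\|fg\|_{H^{-1}}\lesssim \|f\|_{L^1}\|g\|_{L^2}$-type bounds. Concretely, $\|uv\|_{H^{-s}}\lesssim\|u\|_{H^{-k}}\|v\|_{W^{k,\infty}}$ fails since $q$ is rough. So the honest approach is to expand trilinearly, $F(q,q,q)=qC_+(\bar q q)'$, and prove $\|F(a,b,c)\|_{H^{-5}}\lesssim \|a\|_{L^2}\|b\|_{L^2}\|c\|_{L^2}$. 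Pairing with $\phi\in H^5\subset W^{4,\infty}$ gives $\langle \bar a\phi, C_+(\bar b c)'\rangle$. Moving $C_+$ and the derivative across gives $-\langle C_+(a\bar\phi)'\ldots\rangle$, where $C_+(\bar a\phi)$ is in $L^2$ but not $L^\infty$; it is then paired with $\bar b c\in L^1$, which needs $L^\infty$. That fails, so I will instead use Fourier: $\widehat{\bar b c}$ is bounded by $\|b\|\|c\|$ in $L^\infty_\xi$, and $\widehat{\bar a\phi}$ is controlled by $\|a\|_{L^2}$ times $\langle\xi\rangle^{-4}$-decay convolution with $\ell^2$. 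Multiplying by $\xi$ and integrating, the weight $\langle\xi\rangle^{-4}$ convolved with $L^2$ is still in $L^1$ after the $\xi$ loss, since $|\xi|\langle\xi-\eta\rangle^{-4}\lesssim\langle\eta\rangle\langle\xi-\eta\rangle^{-3}$; the resulting $\langle\eta\rangle$ weight is then absorbed using $\widehat{\bar a}$ in $L^2$ only after Cauchy–Schwarz. This closes with room to spare at $H^{-5}$, and differences then follow by trilinearity.

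For \eqref{H diff}, I compare the explicit formula for $\na_\om H_\ka$ from the proof of Theorem~\ref{thm:GWP_ka} with \eqref{5:09}. The first step is to expand
\[
m=\ka^{-1}q-\ka^{-2}\Lc_q q+\ka^{-2}\Lc_q^2(\Lc_q+\ka)^{-1}q,
\]
so that $\ka^3(-im)+i\ka^2q-\ka q'$ combines into $i\Lc_q^2 q$ plus a remainder of size $\ka\Lc_q^2 m$-type, which itself reduces to $\Lc_q\cdot\ka\Lc_q(\Lc_q+\ka)^{-1}m$. In $H^{-5}$ the remainder is bounded by Corollary~\ref{C:decay}, \eqref{item4}, after using Theorem~\ref{T:equiv} and the product bounds above to control $\Lc_q$ from $L^2$ to $H^{-2}$ or so.

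The nonlinear pieces, such as $\ka^3 mC_+(q\bar m)$ versus $qC_+(|q|^2)'$, are treated the same way, substituting the expansion of $m$ and using \eqref{item2} to kill the error terms.

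The main obstacle is the torus renormalization. The terms with $\ka^2\E_0$ prefactors, together with the braces multiplied by $\ka\E_0$, must cancel to leading order, leaving $\mp\tfrac1\pi\E_0 q'$ plus errors governed by $\|\Lc_q m\|$ and $\ka\|\Lc_q(\Lc_q+\ka)^{-1}m\|$. I would carry out this bookkeeping by writing everything in terms of $q$, $\Lc_q m$, and $\ka(\Lc_q+\ka)^{-1}\Lc_q m$, and then invoking Corollary~\ref{C:decay}.
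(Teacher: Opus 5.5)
Both halves of your argument have a genuine gap, and they share a root cause: you never use the one-sided Fourier support coming from $q\in L^2_+$ and from $C_+$. That support is the only reason these products make sense at $L^2$ regularity.

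\textbf{The Lipschitz bound \eqref{H Lip}.} Your final Fourier argument bounds $\widehat{\bar bc}$ in $L^\infty$ and then needs $\int|\xi|\,|\widehat{\bar a\phi}(\xi)|\,d\xi<\infty$. With the Peetre-type bound $|\xi|\lesssim\langle\eta\rangle\langle\xi-\eta\rangle$, the double integral factorizes and leaves $\int\langle\eta\rangle|\widehat{\bar a}(\eta)|\,d\eta$. This is infinite for general $a\in L^2$, and no Cauchy--Schwarz absorbs a growing weight (or even an $L^1$ norm) against $\widehat{\bar a}\in L^2$. In fact the trilinear bound is false if $a$ is merely in $L^2$.

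The missing observation is about supports. The pairing only sees $\xi\ge0$ because of $C_+$, while $\widehat{\bar a}$ lives on $\eta\le0$ because $a\in L^2_+$. Hence $|\xi|,|\eta|\le|\xi-\eta|$, every weight falls on $\widehat\phi$, and Cauchy--Schwarz then gives $\int_{\xi\ge0}|\xi|\,|\widehat{\bar a\phi}(\xi)|\,d\xi\lesssim\|a\|_{L^2}\|\phi\|_{H^5}$. This is exactly what the paper packages in the positive-frequency product estimate \eqref{eqn:H_r_bound}, applied with $r=-2$, $f=q$, $g=(|q|^2)'$.

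\textbf{The convergence \eqref{H diff}.} Expanding $m=\ka^{-1}q-\ka^{-2}\Lc_qq+\ka^{-2}\Lc_q^2m$ is the wrong tool at this regularity. Substituting it gives $-i\ka^3m+i\ka^2q-\ka q'=\mp i\ka qC_+(|q|^2)-i\ka\Lc_q^2m$. Three things go wrong with the last term.
\begin{enumerate}
\item $-i\ka\Lc_q^2m=\Lc_q(-i\ka\Lc_qm)$ is not a small remainder but the main term, formally $-i\Lc_q^2q$.
\item It is not $\Lc_q[\ka\Lc_q(\Lc_q+\ka)^{-1}m]$. That factorization belongs to the different combination \eqref{01}, which comes from the renormalization.
\item \eqref{item2} controls $\Lc_qm$, not $\ka\Lc_qm=\ka\Lc_q(\Lc_q+\ka)^{-1}q$. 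The $L^2$ norm of the latter blows up as $\ka\to\infty$ unless $q$ lies in the operator domain of $\Lc_q$.
\end{enumerate}

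More fundamentally, $\Lc_q^2q$ and the cross terms your expansion produces inside $\ka^3mC_+(q\overline m)$ contain $C_+(\bar qq')$ and $C_+(q\overline{q'})$ separately. Each is undefined for general $q\in L^2_+$: their zeroth Fourier coefficients are $\pm i\int_0^\infty\eta|\widehat q(\eta)|^2\,d\eta$, an $\dot H^{1/2}$ quantity. Only the sum $C_+(|q|^2)'$ survives, so the individual error terms cannot be estimated via Corollary~\ref{C:decay}.

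The paper never forms $\Lc_qq$. Using only $(\Lc_q+\ka)m=q$, it rewrites every piece of $\na_\om(H_\ka-H)$ \emph{exactly} in terms of $q$, $\ka m$, $\Lc_qm$ and $\ka\Lc_q(\Lc_q+\ka)^{-1}m$. For example, \eqref{07} becomes $-i(\Lc_qm)''\pm\ka[qC_+(\bar qm)]'\mp i\ka^2qC_+(\bar qm)$. Its last two terms, grouped with \eqref{08} and simplified via \eqref{Cid} and Fourier-support considerations, leave only terms carrying a factor $\Lc_qm$ or $|\ka m|^2-|q|^2$. Every surviving term can then be bounded by \eqref{eqn:H_r_bound'} together with Corollary~\ref{C:decay}.

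Your account of the torus renormalization is also off. The brace $\ka\E_0\na_\om\|\Lc_qm\|_{L^2}^2$ tends to zero on its own; it does not need to cancel against the $\ka^2\E_0$ terms. The renormalization's essential contribution is $\pm\tfrac{i}{2\pi}\ka q\|\Lc_qm\|_{L^2}^2$, coming from $\|\Lc_qm\|_{L^2}^2\na_\om\E_0$, which combines with $\mp\tfrac{i}{2\pi}\ka^3q\|m\|_{L^2}^2$.

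Your closing paragraph points in the right direction, but that exact regrouping is the substance of the proof, and your expansion route does not produce it.
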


\begin{proof}
Throughout the proof, we use the shorthand $\beta$ and $m$ for $\beta(\ka,q)$ and $m(\ka,q)$, respectively.  We will also frequently use the following bound from \cite[Lemma~2.2]{killip2024sharp}:
\begin{equation}\label{eqn:H_r_bound}
\|C_+f\cdot C_+g\|_{H^{2r-1}}\lesssim\|f\|_{H^r}\|g\|_{H^r},
\end{equation}
valid for any $r<0$. Employing the embedding $L^1\hookrightarrow H^{-1}$, we may thus bound
\begin{align}\label{eqn:H_r_bound'}
\|C_+f\cdot C_+(\bar gh)\|_{H^{-3}}\lesssim \|f\|_{H^{-1}} \|\bar gh\|_{H^{-1}} \lesssim \|f\|_{L^2} \|g\|_{L^2} \|h\|_{L^2}.
\end{align}

Claim \eqref{H Lip} follows easily from \eqref{5:09} and \eqref{eqn:H_r_bound}.  Indeed, the key observation is
$$
\|C_+f\cdot C_+(\bar g h)'\|_{H^{-5}}\lesssim \|f\|_{H^{-2}} \|\bar gh\|_{H^{-1}}  \lesssim \|f\|_{L^2} \|g\|_{L^2} \|h\|_{L^2}.
$$

To prove \eqref{H diff}, we will apply \eqref{eqn:H_r_bound'} when some of the functions $f,g,h$ are $\ka m$ or $\ka m -q = - \Lc_q m$.  To control such contributions, we recall that by \eqref{E:equiv} we have
\begin{align}\label{1}
\|\ka m\|_{L^2} \lesssim \|q\|_{L^2}, 
\end{align}
while by \eqref{item2} we have
\begin{align}\label{2}
\|\ka m-q\|_{L^2} =\|\Lc_q m\|_{L^2} \to 0  \quad \text{as $\ka \to \infty$,}\quad\text{uniformly for $q\in Q$}.
\end{align}
Recall also that from \eqref{item4} we have
\begin{align}\label{3}
\|\ka\Lc_q (\Lc_q+\ka)^{-1}m\|_{L^2} \to 0  \quad \text{as $\ka \to \infty$,}\quad\text{uniformly for $q\in Q$}.
\end{align}

Once again we present the details in the case of the circle, which is the more challenging of the two geometries. Using \eqref{Hamiltonians on T} and \eqref{eqn:H_ka T}, we may write
$$
H_\ka(q)-H(q)= \tfrac12[\E_2^\ka(q)-\E_2(q)] \pm \tfrac3{4\pi} \E_0(q)[\E_1^\ka(q)-\E_1(q)]\mp\tfrac1{4\pi}\ka \E_0(q) \|\Lc_q m\|_{L^2}^2.
$$

We start by showing that the contribution of $\E_0(q)\na_\om[\E_1^\ka(q)-\E_1(q)]$ to the left-hand side of \eqref{H diff} is acceptable. Combining \eqref{eqn:E0 and E1} and \eqref{symplectic deriv reg E}, we have
\begin{align*}
\na_\om[\E_1^\ka(q)-\E_1(q)]&=-2i\ka q+2i\ka^2 m\mp2i\ka^2 m C_+(q\overline{m})\pm\tfrac{i}{\pi}\ka^2m\beta\pm\tfrac{i}{\pi}\ka^2 q\|m\|_{L^2}^2\\
&\quad+2q'\mp\tfrac{2i}{\pi}q\E_0(q).
\end{align*}
Using $(\Lc_q+\ka)m=q$, we write
\begin{align*}
-2i\ka q&+2i\ka^2 m\mp2i\ka^2 m C_+(q\overline{m})+ 2q' \\
&= -2i\ka \Lc_qm \mp2i\ka^2 m C_+(q\overline{m})+ 2( \Lc_qm)' + 2i\ka [ \Lc_qm\pm qC_+(\bar q m) ] \\
&= 2( \Lc_qm)'\pm 2i\ka  \Lc_qm \cdot C_+(q\overline{m}) \pm 2i\ka q C_+(\bar q m - q\overline{m})\\
&= 2( \Lc_qm)'\pm 2i\ka  \Lc_qm \cdot C_+(q\overline{m}) \pm 2i\ka q C_+(m\overline{\Lc_qm} - \overline{m}\Lc_qm).
\end{align*}
Thus, by \eqref{eqn:H_r_bound'}, \eqref{1}, and \eqref{2} we have
\begin{align}\label{E1 diff 1}
\bigl\|-2i\ka q+2i\ka^2 m&\mp2i\ka^2 m C_+(q\overline{m})+ 2q'\bigr\|_{H^{-3}}\nonumber\\
&\lesssim \|\Lc_q m\|_{L^2} + \|q\|_{L^2} \|\ka m\|_{L^2} \|\Lc_q m\|_{L^2}\to 0  \quad \text{as $\ka \to \infty$},
\end{align}
uniformly for $q\in Q$.

For the remaining terms in $\na_\om[\E_1^\ka(q)-\E_1(q)]$, we write
\begin{align*}
\pm\tfrac{i}{\pi}&\ka^2m\beta\pm\tfrac{i}{\pi}\ka^2 q\|m\|_{L^2}^2\mp\tfrac{2i}{\pi}q\E_0(q)\\
&=\pm\tfrac{i}{\pi}\bigl[ \ka(\ka m -q) \beta + \ka q\langle q, m\rangle + \ka^2q\langle m, m\rangle -2q\langle q, q\rangle  \bigr]\\
&=\pm\tfrac{i}{\pi} \bigl[ (\ka m -q) \langle q, \ka m \rangle + q\langle q, \ka m -q\rangle + q\langle\ka m -q, \ka m\rangle + q\langle q, \ka m-q\rangle  \bigr]\\
&=\mp\tfrac{i}{\pi} \bigl[  \Lc_qm \langle q, \ka m \rangle + q\langle q, \Lc_qm\rangle + q\langle  \Lc_qm, \ka m\rangle + q\langle q,  \Lc_qm\rangle  \bigr],
\end{align*}
and so,  by \eqref{eqn:H_r_bound'}, \eqref{1}, and \eqref{2} we have
\begin{align}\label{E1 diff 2}
\bigl\|\pm\tfrac{i}{\pi}\ka^2m\beta\pm\tfrac{i}{\pi}&\ka^2 q\|m\|_{L^2}^2\mp\tfrac{2i}{\pi}q\E_0(q)\bigr\|_{L^2}\lesssim \|\Lc_q m\|_{L^2} \|q\|_{L^2}^2\to 0  \quad \text{as $\ka \to \infty$},
\end{align}
uniformly for $q\in Q$.

Combining \eqref{E1 diff 1} and \eqref{E1 diff 2}, we get
\begin{align*}
\lim_{\ka\to\infty}\,\sup_{q\in Q}\,\bigl\|\E_0(q)\na_\om[\E_1^\ka(q)-\E_1(q)]\bigr\|_{H^{-3}} =0.
\end{align*}

Next we consider the contribution of $\ka \E_0(q) \na_\om \|\Lc_q m\|_{L^2}^2$ to the left-hand side of \eqref{H diff}. Recalling \eqref{eqn:na_om_Lqm} and prudently reordering the terms, we may write 
\begin{align}
\ka \na_\om\| \Lc_qm&\|_{L^2}^2\notag\\
&=-2i\ka q+4i\ka^2 m-2i\ka^3(\Lc_q+\ka)^{-1}m\label{01}\\
&\quad\pm\tfrac{i}{\pi}\ka^2 m\beta\mp \tfrac{i}{\pi}\ka^3m\| m\|_{L^2}^2\label{02}\\
&\quad\pm\tfrac{i}{\pi}\ka^2 m\beta\mp \tfrac{i}{\pi}\ka^3\beta(\Lc_q+\ka)^{-1}m\label{03}\\
&\quad\pm\tfrac{2i}{\pi}\ka^2 q\|m\|^2_{L^2}\mp \tfrac{2i}{\pi}\ka^3 q\lan m,(\Lc_q+\ka)^{-1}m\ran\label{04}\\
&\quad\mp2i\ka^2m C_+(q\overline{m})\pm 2i\ka^3[(\Lc_q+\ka)^{-1}m] C_+(q\overline{m})\label{05}\\
&\quad\mp 2i\ka^2 m C_+(q\overline{m})\pm 2i\ka^3 mC_+(q\overline{(\Lc_q+\ka)^{-1}m}).\label{06}
\end{align}
Using $(\Lc_q+\ka)m=q$, we observe that
\begin{align*}
\eqref{01}&=-2i\ka \Lc_q^2 (\Lc_q+\ka)^{-1}m=[-i\partial_x \mp q C_+ \bar q]\bigl (-2i\ka \Lc_q (\Lc_q+\ka)^{-1}m \bigr),\\
\eqref{02}&= \pm\tfrac{i}{\pi}\ka m\lan \Lc_q m,\ka m\ran, \\
\eqref{03}&=\pm\tfrac{i}{\pi} [\ka\Lc_q(\Lc_q+\ka)^{-1}m] \langle q,\ka m\rangle,\\
\eqref{04}&=\pm\tfrac{2i}{\pi}q\lan \ka m,\ka \Lc_q(\Lc_q+\ka)^{-1}m\ran,\\
\eqref{05}&=\mp 2i[\ka\Lc_q(\Lc_q+\ka)^{-1}m]C_+(q\cdot \ka\overline{m}),\\
\eqref{06}&=\mp 2i\ka m C_+(q\cdot \overline{\ka\Lc_q(\Lc_q+\ka)^{-1}m}).
\end{align*}
Using Cauchy--Schwarz,  \eqref{eqn:H_r_bound'}, \eqref{1}, \eqref{2}, and \eqref{3}, we may thus bound
\begin{align*}
\bigl\|\ka \E_0(q) \na_\om \|\Lc_q m\|_{L^2}^2 \bigr\|_{H^{-3}}&\lesssim (1+\|q\|_{L^2}^2)\|\ka \Lc_q (\Lc_q+\ka)^{-1}m\|_{L^2}+ \|q\|_{L^2}^2\|\Lc_q m\|_{L^2}, 
\end{align*}
which converges to zero as $\ka\to\infty$, uniformly for $q\in Q$.

It remains to control the contribution of
\begin{align*}
&\tfrac12\na_\om [\E_2^\ka(q)-\E_2(q)] \pm \tfrac3{4\pi}[\E_1^\ka(q)-\E_1(q)]\na_\om \E_0(q) \mp\tfrac1{4\pi}\ka \|\Lc_q m\|_{L^2}^2\na_\om \E_0(q)
\end{align*}
to the left-hand side of \eqref{H diff}. Using \eqref{eqn:E0 and E1}, \eqref{eqn: E2}, and \eqref{symplectic deriv reg E}, this takes the form
\begin{align*}
&\tfrac12\na_\om [\E_2^\ka(q)-\E_2(q)] \pm \tfrac3{4\pi}[\E_1^\ka(q)-\E_1(q)]\na_\om \E_0(q) \mp\tfrac1{4\pi}\ka \|\Lc_q m\|_{L^2}^2\na_\om \E_0(q)\\
&=-i\ka^3m\pm i\ka^3m C_+(q\overline{m})\mp\tfrac{i}{2\pi} \ka^3 m \beta\mp\tfrac{i}{2\pi}\ka^3q\|m \|^2_{L^2} +i\ka^2 q  -\ka q'\pm\tfrac{i}{\pi}\ka q\mathcal{E}_0-iq''\\
&\quad \mp2qC_+(|q|^2)'\mp \tfrac1{2\pi}\mathcal{E}_0q'\mp\tfrac{3i}{2\pi}q\mathcal{E}_1 \mp \tfrac{3i}{2\pi}q [-\ka^2\beta+\ka \E_0-\E_1]\pm \tfrac{i}{2\pi}\ka q \|\Lc_q m\|_{L^2}^2.
\end{align*}

Employing the identity $(\Lc_q+\ka)m=q$, we may rewrite 
\begin{align*}
\mp\tfrac{i}{2\pi}\ka^3q\|m \|^2_{L^2} \pm \tfrac{i}{2\pi}\ka q \|\Lc_q m\|_{L^2}^2 = \pm\tfrac{i}{2\pi}\ka q\E_0(q) \mp \tfrac{i}{\pi}\ka^2 q\beta
\end{align*}
and
\begin{align*}
 \mp\tfrac{i}{2\pi} \ka^3  \beta m \pm \tfrac{i}{2\pi} \ka^2  q\beta &=\pm \tfrac{i}{2\pi} \ka^2 \beta\Lc_qm = \pm \tfrac{i}{2\pi}\ka \E_0(q) \Lc_qm\mp \tfrac{i}{2\pi}\ka (\Lc_qm)\langle q, \Lc_qm\rangle. 
\end{align*}
Inserting this into the expression above and grouping terms carefully, we arrive at
\begin{align}
\tfrac12\na_\om [\E_2^\ka(q)&-\E_2(q)] \pm \tfrac3{4\pi}[\E_1^\ka(q)-\E_1(q)]\na_\om \E_0(q) \mp\tfrac1{4\pi}\ka \|\Lc_q m\|_{L^2}^2\na_\om \E_0(q)\notag\\
&=-i\ka^3m +i\ka^2 q -\ka q' -iq''  \label{07}\\
&\quad \pm i\ka^3m C_+(q\overline{m})\mp2qC_+(|q|^2)'-\tfrac{i}{2\pi}\ka^2q\beta C_+(\bar q m)\label{08}\\
&\quad\mp \tfrac1{2\pi}\mathcal{E}_0(q)q' \pm \tfrac{i}{2\pi}\ka \E_0(q) \Lc_qm +\tfrac{i}{2\pi}\ka^2q\beta C_+(\bar q m)\label{09}\\
&\quad\mp \tfrac{i}{2\pi}\ka (\Lc_qm)\langle q, \Lc_qm\rangle.\label{10}
\end{align}

Using repeatedly the identity $(\Lc_q+\ka)m=q$, we may rewrite 
\begin{align*}
\eqref{07}&=-i(\Lc_qm)'' \pm \ka \bigl[q C_+(\bar{q}m)\bigr]'\mp i\ka^2 q C_+(\bar{q}m).
\end{align*}
In view of \eqref{2}, the contribution of the first term above is acceptable:
\begin{align*}
\lim_{\ka\to\infty}\,\sup_{q\in Q}\,\bigl\|(\Lc_qm)''\bigr\|_{H^{-2}} =0.
\end{align*}
Grouping the other two terms with \eqref{08}, we have
\begin{align}
\pm \ka \bigl[q  C_+(\bar{q}m)&\bigr]' \mp i\ka^2 q C_+(\bar{q}m) +\eqref{08} \notag\\
&= \pm \bigl[q C_+(|q|^2)\bigr]' \mp\bigl[q C_+(\bar q\Lc_qm)\bigr]'\mp  i\ka^2(\Lc_q m) C_+(q\overline{m})\notag\\
&\quad \pm i\ka^2q C_+(q\overline m -\bar q m)\mp2qC_+(|q|^2)'-\tfrac{i}{2\pi}\ka^2q\beta C_+(\bar q m)\notag\\
&=\pm (\Lc_qm+\ka m)'C_+(|q|^2)\mp\bigl[q C_+(\bar q \Lc_qm)\bigr]'\mp  i\ka(\Lc_q m)C_+(|q|^2)\notag\\
&\quad \pm i\ka (\Lc_qm)C_+(q\overline{\Lc_qm})\pm i\ka^2 q C_+[\overline m \Lc_qm - m \overline{\Lc_qm}]\mp qC_+(|q|^2)'\notag\\
&\quad-\tfrac{i}{2\pi}\ka^2q\beta C_+(\bar q m)\notag\\
&=\pm (\Lc_qm)'C_+(|q|^2)\mp\bigl[q C_+(\bar q \Lc_qm)\bigr]'+  i\ka q C_+(\bar q m) C_+(|q|^2)\notag\\
&\quad  \pm \ka m' C_+(q\overline{\Lc_qm})-i\ka q C_+(\bar q m) C_+(q\overline{\Lc_qm})\pm q C_+\bigl(|\ka m|^2-|q|^2\bigr)' \notag\\
&\quad-i\ka^2qC_+\bigl[q\overline{m} C_+(\bar q m) - \bar q m C_-(q\overline m)\bigr]-\tfrac{i}{2\pi}\ka^2q\beta C_+(\bar q m)\notag\\
&=\pm (\Lc_qm)'C_+(|q|^2)\mp\bigl[q C_+(\bar q \Lc_qm)\bigr]'\pm \ka m' C_+(q\overline{\Lc_qm})\label{1:07}\\
&\quad  +  i\ka q C_+(\bar q m) C_+(q\ka\overline{m})\pm q C_+\bigl(|\ka m|^2-|q|^2\bigr)' \notag\\
&\quad -i\ka^2qC_+\bigl[q\overline{m} C_+(\bar q m) - \bar q m C_-(q\overline m)\bigr]-\tfrac{i}{2\pi}\ka^2q\beta C_+(\bar q m).\notag
\end{align}
Using \eqref{Cid} and support considerations, we may write
\begin{align*}
&-i\ka^2qC_+\bigl[q\overline{m} C_+(\bar q m) - \bar q m C_-(q\overline m)\bigr]\\
&=-i\ka^2qC_+\bigl[C_+(q\overline{m}) C_+(\bar q m) -\tfrac1{2\pi}\beta C_+(\bar q m) - C_-(\bar q m) C_-(q\overline m)+\tfrac1{2\pi}\beta C_-(q\overline m)\bigr]\\
&= -i\ka^2qC_+(q\overline{m}) C_+(\bar q m) +\tfrac{i}{2\pi}\ka^2q\beta C_+(\bar q m) + i\ka^2q\tfrac1{4\pi^2}\beta^2 - i\ka^2q\tfrac1{4\pi^2}\beta^2\\
&= -i\ka^2qC_+(q\overline{m}) C_+(\bar q m) +\tfrac{i}{2\pi}\ka^2q\beta C_+(\bar q m).
\end{align*}
Inserting this into \eqref{1:07}, we obtain
\begin{align*}
\pm &\ka \bigl[q  C_+(\bar{q}m)\bigr]' \mp i\ka^2 q C_+(\bar{q}m) +\eqref{08}\\
&=\pm (\Lc_qm)'C_+(|q|^2)\mp\bigl[q C_+(\bar q \Lc_qm)\bigr]'\pm \ka m' C_+(q\overline{\Lc_qm})\pm q C_+\bigl(|\ka m|^2-|q|^2\bigr)' .
\end{align*}
Using \eqref{eqn:H_r_bound}, \eqref{eqn:H_r_bound'}, and \eqref{1}, we may satisfactorily bound all these summands:
\begin{align*}
\bigl\| (\Lc_qm)'C_+(|q|^2) \bigr\|_{H^{-3}}&\lesssim \|\Lc_qm\|_{L^2}\|q\|_{L^2}^2,\\
\bigl\| \bigl[q C_+(\bar q \Lc_qm)\bigr]'\bigr\|_{H^{-4}}&\lesssim \|\Lc_qm\|_{L^2}\|q\|_{L^2}^2,\\
\bigl\| \ka m' C_+(q\overline{\Lc_qm})\bigr\|_{H^{-3}}&\lesssim \|\ka m'\|_{H^{-1}} \|\Lc_qm\|_{L^2}\|q\|_{L^2}\lesssim \|\Lc_qm\|_{L^2}\|q\|_{L^2}^2,\\
\bigl\|q C_+\bigl(|\ka m|^2-|q|^2\bigr)' \bigr\|_{H^{-5}}&\lesssim\|q\|_{H^{-2}}\bigl\||\ka m|^2-|q|^2\bigr\|_{H^{-1}}\lesssim  \|\Lc_q m\|_{L^2}\|q\|_{L^2}^2.
\end{align*}
By \eqref{2}, all these terms converge to zero as $\ka \to \infty$, uniformly for $q\in Q$.  In this way, we deduce that
\begin{align*}
\lim_{\ka\to\infty}\,\sup_{q\in Q}\,\bigl\|\eqref{07}+\eqref{08} \bigr\|_{H^{-5}}=0.
\end{align*}

We now turn to \eqref{09}. Observing that 
\begin{align*}
q'&=(\Lc_qm)' +i\ka\Lc_qm \pm i\ka qC_+(\bar{q}m),
\end{align*}
we may write
\begin{align*}
\eqref{09}= \mp\tfrac{1}{2\pi}\E_0(q)(\Lc_qm)'-\tfrac i{2\pi}\ka q C_+(\bar q m)\langle q, \Lc_qm\rangle.
\end{align*}
By \eqref{eqn:H_r_bound'}, \eqref{1}, and \eqref{2}, we obtain
\begin{align*}
\lim_{\ka\to\infty}\,\sup_{q\in Q}\,\bigl\|\eqref{09} \bigr\|_{H^{-3}}=0.
\end{align*}

Finally, we turn to \eqref{10}.  Recalling that $\Lc_qm=-im'\mp qC_+(\bar q m)$ and using Cauchy--Schwarz and \eqref{eqn:H_r_bound'}, we may bound
\begin{align*}
\bigl\|\ka (\Lc_qm)\langle q, \Lc_q m\rangle \bigr\|_{H^{-3}} \lesssim \|\ka m\|_{L^2} \bigl(1+ \|q\|_{L^2}^2\bigr) \|q\|_{L^2}\| \Lc_q m\|_{L^2}.
\end{align*}
By \eqref{1} and \eqref{2}, we thus have
\begin{align*}
\lim_{\ka\to\infty}\,\sup_{q\in Q}\,\bigl\|\eqref{10} \bigr\|_{H^{-3}}=0,
\end{align*}
which completes the verification of \eqref{H diff}.
\end{proof} 

\begin{proof}[Proof of Theorem \ref{T:GWP}]
Fix $M< M_\ast$ and an $L^2$-equicontinuous set $Q\subset B_M$. Our first goal is to show that for each $T>0$, we have
\begin{align}\label{1:39}
\lim_{\ka, \varkappa\to \infty} \, \sup_{q_0\in Q} \, \bigl\| e^{t\na_\om H_{\ka}}q_0 - e^{t\na_\om H_{\varkappa}}q_0  \bigr\|_{C_t([-T,T], L^2_+)} = 0,
\end{align}
which is to say that the $H_\ka$ flows are Cauchy in $C_tL_+^2$ as $\ka \to \infty$, uniformly on $Q$.

Recall that by Theorem~\ref{thm:GWP_ka}, the flows $e^{t\na_\om H_\ka}q_0$ with $\ka\geq1$ are globally defined and the set
$$
Q_{\ast}:=\{e^{s\na_\om H_{\ka}}e^{t\na_\om H_{\vk}}q_0: q_0\in Q,\,\ka,\vk\geq 1,\,s,t\in\R\}
$$
is $L^2$-equicontinuous.

We begin by proving an analogue of \eqref{1:39} in the $C_tH^{-5}$ topology.  Using the commutativity of the $H_\ka$ flows for different values of $\ka$ (cf. Proposition~\ref{P:beta comm}) and the fundamental theorem of calculus, we may bound
\begin{align*}
\sup_{q_0\in Q}\, \bigl\|e^{t \na_\om H_{\ka}}q_0 &-e^{t \na_\om H_{\vk}}q_0\bigr\|_{C_t([-T,T],H^{-5})}\\
&=\sup_{q_0\in Q}\, \sup_{t\in[-T,T]}\, \bigl\|(e^{t\na_\om(H_{\ka}-H_{\vk})}-1)\circ e^{t\na_\om H_{\vk}}q_0\bigr\|_{H^{-5}}\\
&\leq T\sup_{q\in Q_{\ast}}\|\na_\om(H_\ka-H_{\vk})(q)\|_{H^{-5}}.
\end{align*}
By Proposition~\ref{prop:diff_estimate},
\begin{align*}
\lim_{\ka, \varkappa\to \infty} \,\sup_{q\in Q_{\ast}}\|\na_\om(H_\ka-H_{\vk})(q)\|_{H^{-5}}=0,
\end{align*}
which yields
\begin{align}\label{1:40}
\lim_{\ka, \varkappa\to \infty} \, \sup_{q_0\in Q} \, \bigl\| e^{t\na_\om H_{\ka}}q_0 - e^{t\na_\om H_{\varkappa}}q_0  \bigr\|_{C_t([-T,T], H^{-5})} = 0.
\end{align}

To upgrade \eqref{1:40} to \eqref{1:39}, we use the equicontinuity of $Q_\ast$.  Indeed, given $\eps>0$ we choose $N=N(\eps, Q_*)$ such that
$$
\sup_{q\in Q_\ast}\|P_{>N} q\|_{L^2} \leq \eps.
$$ 
Then for $\ka,\vk\geq1$, we may estimate
\begin{align*}
\sup_{q_0\in Q}\,\sup_{t\in[-T,T]}\, &\bigl\|e^{t\na_\om H_{\ka}}q_0 - e^{t\na_\om H_{\varkappa}}q_0\bigr\|_{L^2}\\
&\leq\sup_{q_0\in Q}\,  \sup_{t\in[-T,T]}\,\bigl\|P_{\leq N}\bigl[e^{t\na_\om H_{\ka}}q_0 - e^{t\na_\om H_{\varkappa}}q_0\bigr]\bigr\|_{L^2}\\
&\quad +\sup_{q_0\in Q}\,  \sup_{t\in[-T,T]}\,\bigl\|P_{> N}\bigl[e^{t\na_\om H_{\ka}}q_0 - e^{t\na_\om H_{\varkappa}}q_0\bigr]\bigr\|_{L^2}\\
&\lesssim N^5\sup_{q_0\in Q}\,  \sup_{t\in[-T,T]}\, \bigl\|e^{t\na_\om H_{\ka}}q_0 - e^{t\na_\om H_{\varkappa}}q_0\bigr\|_{H^{-5}}+\eps.
\end{align*}
Together with \eqref{1:40}, this proves \eqref{1:39}. In particular, for each $q_0\in Q$, this guarantees the existence of a trajectory $q(t,q_0)$ belonging to $C_tL_+^2$ so that 
\begin{align}\label{1:41}
\lim_{\ka\to \infty}\, \sup_{q_0\in Q} \, \bigl\| e^{t\na_\om H_{\ka}}q_0 - q(t,q_0)\bigr\|_{C_t([-T,T],L^2_+)}=0.
\end{align}

Next, we confirm that $q(t)=q(t,q_0)$ does indeed solve $\dd_t q=\na_\om H(q)$ with initial data $q_0$.  As a solution to \eqref{IV Hk}, $q_\kappa(t):=e^{t\na_\om H_{\ka}}q_0$ satisfies $q_\kappa(0)=q_0$ and
\begin{equation}\label{eqn:equality}
q_{\ka}(t)=q_0+\int_0^t \na_\om H_\ka (q_\ka(s))\,ds
\end{equation}
in $H^{-5}$ sense for each $\kappa\geq 1$ and $t\in\R$.  We now wish to send $\kappa\to\infty$.  We know already that  $q_{\ka}(t)\rightarrow q(t)$ in $L^2$ and so turn our attention to the integrand:
\begin{align*}
\sup_{s} \|&\na_\om H_\ka(q_\ka(s))-\na_\om H(q(s))\|_{H^{-5}}\\
&\leq\sup_{s}\|\na_\om (H_\ka-H)(q_\ka(s))\|_{H^{-5}}+\sup_{s}\|\na_\om H(q_\ka(s))-\na_\om H(q(s))\|_{H^{-5}}.
\end{align*}
Proposition \ref{prop:diff_estimate} ensures that both terms vanish as $\ka\to\infty$. In this way, we can take the limit in \eqref{eqn:equality} to obtain
\begin{equation}\label{1797}
q(t)=q_0+\int_0^t \na_\om H (q(s))\,ds
\end{equation}
in $C_tH^{-5}$ sense. In particular, $q$ is a distributional solution.  It is not difficult to upgrade this notion of solution; for example, one can see that  both \eqref{1797} and the analogous Duhamel formulation hold in $C_t^1H^{-5}$ sense.

It remains to prove continuity of the flow map. Suppose $q_n\rightarrow q_0$ in $L^2_+$.  The set $Q=\{q_n:n\geq1\}\cup\{q_0\}$ is compact in $L^2$ and so equicontinuous. Thus by \eqref{1:41}, for each $\eps>0$ and $T>0$, there exists $\ka\geq1$ so that
$$
\sup_{u\in Q}\,\bigl\|(e^{t\na_\om H}-e^{t\na_\om H_\ka})u\bigr\|_{C_t([-T,T],L^2_+)}\leq\eps.
$$
For this same $\ka$, well-posedness of the $H_\ka$ flow ensures that 
$$
\bigl\|e^{t\na_\om H_\ka}q_n-e^{t\na_\om H_\ka}q_0\bigr\|_{C_t([-T,T],L^2_+)}\leq\eps
$$
for all $n$ large enough. For such $n$, we then have
\begin{align*}
&\bigl\|e^{t\na_\om H}q_n-e^{t\na_\om H}q_0\bigr\|_{C_t([-T,T],L^2_+)}\\
&\leq 2\sup_{u\in Q}\, \bigl\|(e^{t\na_\om H}-e^{t\na_\om H_\ka})u\bigr\|_{C_t([-T,T],L^2_+)}+\bigl\|e^{t\na_\om H_\ka}q_n-e^{t\na_\om H_\ka}q_0\bigr\|_{C_t([-T,T],L^2_+)}\\
&\leq3\eps.
\end{align*}
This completes the proof. 
\end{proof}

\bibliography{refs2}

\end{document}